\theoremstyle{plain}
   \newtheorem{theorem}{Theorem}[section]
   \newtheorem{proposition}[theorem]{Proposition}
   \newtheorem{prop}[theorem]{Proposition}
   \newtheorem{lemma}[theorem]{Lemma}
   \newtheorem{corollary}[theorem]{Corollary}
   \newtheorem{conjecture}[theorem]{Conjecture}
\theoremstyle{definition}
   \newtheorem{definition}[theorem]{Definition}
   \newtheorem{example}[theorem]{Example}
   \newtheorem{remark}[theorem]{Remark}
\numberwithin{equation}{section}
\newcommand{\sfrac}[3][1.2]{\scalebox{#1}{$\frac{#2}{#3}$}}
\newcommand{\bbinom}[2]{{\tbinom{#1}{#2}}} 
\newcommand{\sbinom}[2]{{\tbinom{#1}{#2}}} 
\DeclareMathOperator*{\Coef}{Coef}
\DeclareMathOperator*{\VDM}{VM}
\newcommand{\ot}{\otimes}
\newcommand{\BBB}{{\mathcal{B}}}
\newcommand{\CC}{{\mathbb {C}}}
\newcommand{\ZZ}{{\mathbb {Z}}}
\newcommand{\Jac}{{\operatorname{Jac}}}
\newcommand{\Hilb}{{\operatorname{Hilb}}}
\newcommand{\reg}{{\operatorname{reg}}}
\newcommand{\Sym}{{\operatorname{Sym}}}
\newcommand{\Frac}{{\operatorname{Frac}}}
\newcommand{\Hom}{{\operatorname{Hom}}}
\newcommand{\Tr}{{\operatorname{Tr}}}
\newcommand{\rank}{{\operatorname{rank}}}
\newcommand{\Saitodegree}{{\operatorname{\Delta}}}
\newcommand{\del}{\partial}
\newcommand{\one}{\mathbf{1}}
\newcommand{\Hderiv}{{D_H}}
\newcommand{\SSS}{{S}}
\newcommand{\lieg}{\mathfrak{g}}
\newcommand{\lieh}{V}
\newcommand{\sss}{\mathbf{s}}
\newcommand{\kk}{\mathbf{k}}
\newcommand{\mmm}{\mathbf{m}}
\newcommand{\nnn}{\mathbf{n}}
\begin{document}

\title[Invariant derivations and differential forms]
{Invariant derivations and differential forms\\ for reflection groups}

\dedicatory{To Peter Orlik and Hiroaki Terao on their 80th and 65th birthdays, respectively.}

\author{Victor\ Reiner}
\author{Anne V.\ Shepler}
\thanks{Research supported in part by NSF grants DMS-1601961,DMS-1101177,   
and Simons Foundation Grant \#429539.}
\email{reiner@math.umn.edu, ashepler@unt.edu}

\keywords{Reflection groups.  Invariant theory. Weyl groups. Coxeter groups.}

\begin{abstract}
Classical invariant theory of a complex reflection group $W$ 
highlights three beautiful structures:
\begin{itemize}
\item[ ] \hspace{-3ex}$\bullet$ 
the $W$-invariant polynomials constitute a polynomial algebra, over which
\item[ ] \hspace{-3ex}$\bullet$ 
the $W$-invariant differential forms with polynomial coefficients
constitute an exterior algebra, and 
\item[ ] \hspace{-3ex}$\bullet$ 
the relative invariants of any $W$-representation
constitute a free module.
\end{itemize}
When $W$ is a {\it duality} (or {\it well-generated}) group,
we give an explicit description 
of the isotypic component within the differential forms 
of the irreducible reflection representation.
This resolves a conjecture of Armstrong, Rhoades
 and the first author, 
and relates to Lie-theoretic conjectures and
results of 
Bazlov, Broer, Joseph, Reeder, and Stembridge,
and also Deconcini, Papi, and Procesi.
We establish 
this result by examining the space of 
$W$-invariant differential derivations; these are derivations whose coefficients
are not just polynomials, 
but differential forms with polynomial coefficients.

For every complex reflection group $W$,
we show that the space of invariant differential derivations
is finitely generated as a module over
the invariant differential forms by
the basic derivations together
with their exterior derivatives.
When $W$ is a duality group,
we show that the space of invariant differential
derivations is free as a module over the exterior subalgebra
of $W$-invariant forms generated by all 
but the top-degree exterior generator.
(The basic invariant of highest degree is omitted.)

Our arguments for duality groups are case-free, i.e., 
they do not rely on any reflection group classification.
\end{abstract}

\maketitle

\vspace{-2ex}

\section{Introduction}
A celebrated result of Solomon~\cite{Solomon}
exhibits the set of differential forms invariant
under the action of a complex reflection group
as an {\em exterior algebra}.  A similar result holds when
we consider derivations instead of differential forms, 
i.e., elements of
$S(V^*)\ot V$ instead of $S(V^*)\ot \wedge V^*$, for a reflection representation $V$
with symmetric algebra $S(V^*)$.
The polynomial degrees of generators of these sets of invariants
are positioned into various combinatorial identities
expressing the geometry, topology, and representation theory
of reflection groups.
Recently, a theory of
Catalan combinatorics for reflection groups (e.g., see~\cite{ArmstrongRRhoades}) 
has prompted questions about the structure of invariant forms for other representations of
a reflection group.  Of particular interest are the {\em differential derivations},
elements of $S(V^*)\ot \wedge V^*\ot V$.
The first author together with Armstrong and Rhoades
conjectured a 
formula~\cite[Conj.~11.5${}^\prime$]{ArmstrongRRhoades} 
for the Poincar\'e polynomial
of the invariant differential derivations when the
reflection group $W$ is real, i.e., a finite Coxeter group.  
We verify this conjecture and show that
the set of invariant differential derivations, 
$$(S(V^*)\ot \wedge V^*\ot V)^W\, ,$$
is a free module over an exterior algebra
constructed from exterior derivatives $df_i$ of 
all but one of the basic invariants $f_i$;
the last basic invariant of highest polynomial degree is
omitted.
In fact, we give the 
explicit structure of the invariant differential derivations
for all complex reflection groups $W$ that are duality groups.
We also give a basis for nonduality groups.
We explain these two main results next.

\subsection*{Invariant theory of reflection groups}
Recall that a {\it reflection} on a finite dimensional
vector space $V=\CC^\ell$ is a nonidentity general linear transformation
that fixes a hyperplane in $V$ pointwise. A {\it complex reflection group} $W$
is a subgroup of $\text{GL}(V)$
generated by {\it reflections}; we assume all reflection groups
are finite.
We fix a $\CC$-basis
$x_1,\ldots,x_\ell$ of $V^*$ with dual $\CC$-basis
$y_1, \ldots, y_\ell$ of $V$ 
and identify the symmetric algebra $\SSS:=\Sym(V^*)$ 
with the polynomial ring
$\CC[x_1,\ldots,x_\ell]$,
which carries a $W$-action by linear substitutions.
Shephard and Todd~\cite{ShephardTodd} and Chevalley~\cite{Chevalley}
showed that the $W$-invariant subalgebra $\SSS^W$ is again {\it polynomial}:
$$
\SSS^W=\CC[f_1,\ldots,f_\ell]
$$ 
for certain algebraically independent polynomials
$f_1, \ldots, f_\ell$ called {\em basic invariants}.  One can
choose $f_1,\ldots,f_\ell$ homogeneous;
we assume 
$\deg(f_1) \leq \cdots \leq \deg(f_\ell)$ after re-indexing.
It follows that 
for any $W$-representation $U$,
the space of {\it relative invariants} $(\SSS \otimes U)^W$
forms a free $\SSS^W$-module of rank $\dim_\CC(U)$
(see, e.g., Hochster and Eagon~\cite[Prop.~16]{HochsterEagon}).
Note that 
we take all tensor products and exterior algebras over $\CC$
unless otherwise indicated.  We also assume all representations
are complex and finite dimensional.

\subsection*{Differential forms and derivations}
Two particular cases have received much attention.
First, when 
$U=\wedge V^*$, 
one may identify $\SSS\ot U=\SSS \otimes \wedge V^*$ 
with the $\SSS$-module of {\it differential forms with polynomial coefficients}
generated by $dx_1, \ldots, dx_\ell$.
Solomon's theorem~\cite{ShephardTodd, Solomon} 
asserts that $(\SSS \otimes \wedge V^*)^W$ is not
just a free $\SSS^W$-module, but, in fact, an exterior algebra
over $\SSS^W$ on exterior generators $df_1,\ldots,df_\ell$,
where $df_j:=\sum_{i=1}^\ell \frac{\partial f_j}{\partial x_i} \otimes x_i$
has degree $e_i:=\deg(f_i)-1$:
$$
(\SSS \otimes \wedge V^*)^W = \bigwedge_{S^W}\{ df_1, \ldots, df_{\ell}\}
\, .
$$
Second, when $U=V$, one
may identify $\SSS\ot U=\SSS \otimes V$ 
with the set of {\it derivations} $S \rightarrow S$ on $V$
generated by the partial derivatives 
$\del/\del x_1, \ldots, \del/\del x_n$.
Here, a derivation 
$\theta=\sum_{i=1}^\ell \theta^{(i)} \otimes y_i$
maps $f$ in $S$ 
to 
$\theta(f)=\sum_{i=1}^\ell 
\theta^{(i)} \sfrac{\partial}{\partial x_i}(f)$.
One may choose a homogeneous
basis $\theta_1,\ldots,\theta_\ell$
for the free $\SSS^W$-module $(\SSS \otimes V)^W$, i.e., 
\begin{equation}
\label{basic-derivations-definition}
(\SSS \otimes V)^W = S^W\theta_1 \oplus \ldots\oplus S^W \theta_\ell\,
\end{equation}
for some {\em basic derivations}
$\theta_j=\sum_{i=1}^\ell \theta_i^{(j)} \otimes y_i$
with each $\theta_i^{(j)}$ in $\SSS$  homogeneous of
fixed degree, say $e_j^*$.

\subsection*{Duality groups}
Our main results combine these contexts, with special results
for duality groups.
An irreducible complex reflection group $W$ is a {\it duality group}
if its {\it coexponents} $e_1^* \geq \cdots \geq e_\ell^*$
and {\it exponents} $e_1 \leq \cdots \leq e_\ell$ determine each other 
via the relation
$
e_i + e_i^* = h,
$
where $h:= \deg(f_\ell)=e_\ell+1$ is the largest degree of a basic invariant, 
called the {\it Coxeter number} for the duality group $W$.

Duality groups include all irreducible 
{\it real} reflection groups (i.e., finite Coxeter groups) 
as well as symmetry groups
of {\it regular complex polytopes}~\cite{Coxeter}. 
It was observed by Orlik and Solomon~\cite{OrlikSolomon} 
in a case-by-case fashion 
(using the 
classification~\cite{ShephardTodd})
that an irreducible 
complex reflection group $W$ is a duality group if and only if
it is {\it well-generated}, that is, generated by $\ell=\dim(V)$ reflections,
but we will not need this fact in the sequel.

\subsection*{Main theorems}
We consider the set $M$ of mixed forms, called {\em differential derivations},
$$
M:=\SSS \otimes \wedge V^* \otimes V.
$$
We view $M$
as an $(\SSS \otimes \wedge V^*)$-module
via multiplication in the first two tensor positions.
Its $W$-invariant subspace $M^W$
is then a $(\SSS \otimes \wedge V^*)^W$-module.
In general, $M^W$ will not
be a {\it free} module.  Nevertheless, 
our first main result asserts that for duality groups, 
$M^W$  {\it is} a free $R$-module,
where $R$ is the subalgebra of 
the invariant forms generated by all $df_i$ but the last,
$$
R:=\bigwedge_{\SSS^W}
\{df_1,\ldots,df_{\ell-1}\}\, ,
$$
with only $df_{\ell}$ omitted.  To give an $R$-basis, 
we extend the usual exterior derivative operator $d$ on $\SSS$ to
a function on $\SSS\ot \wedge V^* \ot V$ defining
$
d(f\ot \omega \ot y) = \sum_{1\leq i\leq \ell} 
\sfrac{\del f}{\del x_i} \ot (x_i\wedge \omega) \ \ot y.
$
We identify $S\ot V$ with the subspace $S\ot 1 \ot V$ of differential derivations,
$$
\begin{array}{rcl}
S \ot V &\hookrightarrow &S \ot \wedge V^* \ot V\\
f \otimes y\ &\longmapsto & \! f \, \otimes\ \ \, 1\ \  \otimes \, y ,
\end{array}
$$
and apply $d$ to derivations using this inclusion.
Our first main result is shown with case-free arguments
(it does not depend on any classification).
\begin{theorem}
\label{main-result}
For $W$ a duality (well-generated) complex reflection group,
$(\SSS \otimes \wedge V^* \otimes V)^W$ forms a
free $R$-module on the $2\ell$ basis elements
$
\{\theta_1,\ldots,\theta_\ell, \,\, d\theta_1, \ldots, d \theta_\ell\}\, .
$ 
\end{theorem}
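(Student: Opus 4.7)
The plan is to exhibit the $R$-linear map
$\Phi: R^{\,2\ell} \to (\SSS \otimes \wedge V^* \otimes V)^W$
sending the standard generators (with appropriate bidegree shifts) to
$\{\theta_1,\ldots,\theta_\ell,d\theta_1,\ldots,d\theta_\ell\}$
as an isomorphism of bigraded $R$-modules.
Membership in $M^W$ is immediate: the $\theta_j$ are invariant by construction,
and since $W$ acts on $V^*$ by linear substitutions the exterior derivative $d$ commutes with the $W$-action,
so each $d\theta_j$ is invariant; since $R\subseteq (\SSS\otimes\wedge V^*)^W$, the image of $\Phi$ lies in $M^W$.

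The first technical step is a bigraded Hilbert series identity, with $q$ marking polynomial degree
and $t$ marking exterior degree. Note $\theta_j$ has bidegree $(e_j^*,0)$ and $d\theta_j$ has bidegree
$(e_j^*-1,1)$, since $d$ lowers polynomial degree by one while raising exterior degree by one.
Assuming freeness, the source has Hilbert series
$$
\Hilb(R;q,t)\cdot(q+t)\sum_{j=1}^\ell q^{e_j^*-1} \;=\; \frac{\prod_{i=1}^{\ell-1}(1+q^{e_i}t)}{\prod_{i=1}^\ell(1-q^{e_i+1})}\,(q+t)\sum_{j=1}^\ell q^{e_j^*-1},
$$
while Molien's formula gives
$$
\Hilb(M^W;q,t) \;=\; \frac{1}{|W|}\sum_{w\in W}\frac{\det_{V^*}(1+tw)\cdot\mathrm{tr}_V(w)}{\det_{V^*}(1-qw)}.
$$
I would verify equality of these expressions case-freely using the duality
relation $e_i+e_i^*=h$, which is precisely what makes the Molien sum collapse
into a product with the symmetric factor $(q+t)\sum_j q^{e_j^*-1}$ relative to
Solomon's series for $(\SSS\otimes\wedge V^*)^W$.

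The remaining step is surjectivity of $\Phi$; once surjectivity is established,
freeness follows from the Hilbert series match, as any surjection from
$R^{2\ell}$ onto a graded $S^W$-module of equal bigraded Hilbert series is
an isomorphism (its kernel has zero Hilbert series, hence vanishes).
Invoking the paper's general generation result---that $\{\theta_j,d\theta_j\}$
generate $M^W$ over $(\SSS\otimes\wedge V^*)^W$ for every complex reflection
group $W$---and Solomon's decomposition
$(\SSS\otimes\wedge V^*)^W = R\oplus R\cdot df_\ell$, surjectivity reduces to
showing that $df_\ell\cdot\theta_j$ and $df_\ell\cdot d\theta_j$ lie in the
$R$-span of $\{\theta_k,d\theta_k\}$. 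This reduction is the main obstacle:
duality ensures the bidegrees balance, and producing the explicit $R$-relations
likely requires a Leibniz-type identity such as
$d(df_\ell\,\theta_j)=-df_\ell\,d\theta_j$ (from $d^2=0$), combined with
structural information about the matrix $\bigl(\theta_j(f_i)\bigr)$ of
basic-derivation/basic-invariant pairings, whose determinant is (up to scalar)
the Jacobian $\det(\partial f_i/\partial x_k)$. The case-free character of
the argument comes from exploiting the exponent/coexponent
symmetry rather than any classification data.
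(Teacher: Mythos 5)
Your proposal has two genuine gaps, and both occur at the points you yourself flag as the technical crux. First, the bigraded Hilbert series identity you propose to ``verify case-freely'' from Molien's formula is not an available input: in the paper that identity, Equation~\eqref{Hilbert-series-consequence}, is derived as a \emph{consequence} of Theorem~\ref{main-result}, and there is no known case-free collapse of the full Molien sum $\frac{1}{|W|}\sum_{w}\det(1+tw)\Tr_V(w^{-1})/\det(1-qw)$ into the claimed product. The paper deliberately avoids needing the full Hilbert series: via Opdam's Lemma~\ref{Opdam-lemma} it extracts only the \emph{degree sum} $\Delta(\wedge^m V^*\otimes V)$, a single number per exterior degree, which is computable case-freely precisely because only the identity and the reflections contribute to the relevant derivative of the Molien sum at $q=1$. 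Combined with Lemma~\ref{commutative-algebra-lemma}, knowing the degree sum reduces the whole problem to linear independence over $K=\Frac(S)$ --- no surjectivity argument and no full Hilbert series computation is ever needed.

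Second, your surjectivity step is circular: Theorem~\ref{generation-result} is proved in the paper case-by-case for the non-duality groups only, and for duality groups it is \emph{deduced from} Theorem~\ref{main-result}; invoking it here assumes what is to be proved (and would in any case sacrifice the case-free character you want). The remaining reduction --- expressing $df_\ell\,\theta_j$ and $df_\ell\,d\theta_j$ in the $R$-span of $\{\theta_k,d\theta_k\}$ --- is exactly the hard content, and the Leibniz identity $d(df_\ell\,\theta_j)=-df_\ell\,d\theta_j$ together with the pairing matrix $(\theta_j(f_i))$ does not by itself produce the required relations. What actually replaces all of this in the paper is a fraction-field linear independence argument (Theorem~\ref{linear-independence-theorem}): one writes $d\theta_k=\sum_{i,j}r_{i,j,k}\,df_i\,\theta_j$, applies the Euler contraction $E\otimes\one_V$, and evaluates at a regular vector $v$ with $f_1(v)=\cdots=f_{\ell-1}(v)=0$ to show the matrix $(r_{\ell,j,k})$ is nonsingular; the existence of such a $v$ for duality groups is the one genuinely Lie-theoretic input, supplied by Springer's regular element theory and the Lehrer--Michel theorem (Lemma~\ref{well-generated-implies-h-regular}). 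Your outline contains none of these ingredients, so as it stands it does not constitute a proof.
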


For arbitrary complex reflection groups, we have no uniform statement like
Theorem~\ref{main-result}.  However,
we give an explicit free
$S^W$-basis for $(\SSS \otimes \wedge V^* \otimes V)^W$
in the remaining (non-duality) cases, showing this:

\begin{theorem}
\label{generation-result}
For $W$ any complex reflection group,
$(\SSS \otimes \wedge V^* \otimes V)^W$ is generated as a
module over the exterior algebra 
$(\SSS \otimes \wedge V^*)^W=\bigwedge_{\SSS^W}\{df_1,\ldots,df_{\ell}\}$ by the $2\ell$ generators
$
\{\theta_1,\ldots,\theta_\ell, \,\, d\theta_1, \ldots, d \theta_\ell\}.
$ 
\end{theorem}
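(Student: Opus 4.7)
My plan is to show that the $(S \otimes \wedge V^*)^W$-submodule $N$ of $M^W$ generated by the $2\ell$ elements $\{\theta_i, d\theta_i\}_{i=1}^\ell$ equals all of $M^W$. Membership of the generators is straightforward: each $\theta_i$ lies in $(S \otimes V)^W \hookrightarrow M^W$ by construction, while each $d\theta_i$ lies in $M^W$ because the extended derivative $d$ (acting trivially on the $V$ factor) commutes with the $W$-action on $S \otimes \wedge V^* \otimes V$. A useful preliminary observation is that $N$ is closed under $d$: the graded Leibniz rule gives $d(\omega \cdot \theta_i) = d\omega \cdot \theta_i + (-1)^{|\omega|}\omega \cdot d\theta_i$ and $d(\omega \cdot d\theta_i) = d\omega \cdot d\theta_i$ (using $d^2 = 0$), both of which again lie in $N$ since $d\omega \in (S \otimes \wedge V^*)^W$.

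For a duality group, the theorem follows immediately from Theorem~\ref{main-result}: since $\{\theta_i, d\theta_i\}$ is already a free basis of $M^W$ over the subalgebra $R = \bigwedge_{S^W}\{df_1, \ldots, df_{\ell-1}\}$, and $(S \otimes \wedge V^*)^W = R \oplus R \cdot df_\ell$, generation over the larger algebra is automatic. For the remaining non-duality cases, my plan is to exhibit, for each such $W$, an explicit free $S^W$-basis of $M^W$ consisting entirely of elements of the form $df_I \cdot \theta_i$ or $df_J \cdot d\theta_i$ for suitable index sets $I, J \subseteq \{1, \ldots, \ell\}$. The existence of such a basis immediately yields generation of $M^W$ by $\{\theta_i, d\theta_i\}$ over $(S \otimes \wedge V^*)^W$.

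Producing these bases is the heart of the argument, and the strategy is to use graded Nakayama to reduce to a finite-dimensional check on the fiber $M^W / A_+ M^W$, where $A := (S \otimes \wedge V^*)^W$ and $A_+ = (f_1, \ldots, f_\ell, df_1, \ldots, df_\ell)$ is its augmentation ideal. By Hochster-Eagon, $M^W$ is free over $S^W$ of rank $2^\ell \ell$, so its bigraded Hilbert series is computable via Molien's formula. The proposed generators contribute the polynomial $\sum_i \bigl(q^{e_i^*} + q^{e_i^* - 1} t\bigr)$ to the fiber, encoding the bidegrees $(e_i^*, 0)$ of $\theta_i$ and $(e_i^* - 1, 1)$ of $d\theta_i$, so matching this against $\text{Hilb}\bigl(M^W / A_+ M^W; q, t\bigr)$ verifies both that the fiber has the right dimension and that the images of $\{\theta_i, d\theta_i\}$ span. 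For duality groups the coexponent relation $e_i + e_i^* = h$ delivers the match; for non-duality groups, where this uniform identity fails, the bidegree matching must be verified directly from explicit invariant-theoretic data for each group in the Shephard-Todd classification of non-well-generated irreducible complex reflection groups. I expect this case-by-case Hilbert series and basis construction to be the main obstacle, since the lack of a uniform coexponent relation precludes the single-formula argument that works in the duality setting.
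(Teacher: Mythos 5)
Your high-level plan coincides with the paper's: reduce to irreducible $W$, dispose of duality groups via Theorem~\ref{main-result}, and for the remaining groups exhibit explicit free $\SSS^W$-bases of $(\SSS\ot\wedge V^*\ot V)^W$ consisting of elements $df_I\,\theta_k$ and $df_J\,d\theta_k$, from which generation over $(\SSS\ot\wedge V^*)^W$ is immediate. The gap is in your proposed verification mechanism. You invoke graded Nakayama over $A:=(\SSS\ot\wedge V^*)^W$ and propose to compute $\Hilb\bigl(M^W/A_+M^W;q,t\bigr)$ and match it against $\sum_i\bigl(q^{e_i^*}+q^{e_i^*-1}t\bigr)$. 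But Molien's formula gives $\Hilb(M^W;q,t)$, not the Hilbert series of the fiber over $A_+$: since $M^W$ is \emph{not} free over $A$ (the paper stresses this, and the $G_{31}$ computation~\eqref{G31-hilbert-series} shows the quotient of Hilbert series by $\Hilb(A;q,t)$ is not even a polynomial with nonnegative coefficients matching your candidate), you cannot obtain $\Hilb(M^W/A_+M^W)$ by dividing Hilbert series; computing that fiber requires knowing the $A$-module structure of $M^W$, which is essentially the thing being proved. Moreover, even granting the fiber's dimensions, a bidegree match does not show the images of $\{\theta_i,d\theta_i\}$ \emph{span} the fiber without a separate linear-independence argument (coexponents can coincide). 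The Nakayama argument that does work is the one over $\SSS^W$, where freeness (Hochster--Eagon) makes $\Hilb(M^W/\SSS^W_+M^W)=\Hilb(M^W)/\Hilb(\SSS^W)$ legitimate; this is, in effect, the paper's Corollary~\ref{key-numerology-corollary}, which certifies a candidate $\SSS^W$-basis by an a priori degree-sum computation (Opdam's Lemma~\ref{Opdam-lemma} via Proposition~\ref{Molien-derivative-prop}) together with linear independence over $\Frac(\SSS)$.

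A second, smaller shortfall: you describe the remaining work as checking ``explicit invariant-theoretic data for each group'' in the classification, but the non-duality irreducibles comprise the rank-two groups, $G_{31}$, \emph{and the infinite family} $G(r,p,\ell)$ with $1<p<r$. The last is not a finite list of checks; the paper must prove a uniform statement (Theorem~\ref{monomial-group-theorem}) by reducing to $p=1$, rescaling, and running an induction on $\ell$ through a block decomposition whose pieces involve exterior powers of Vandermonde matrices. Any complete proof along your lines would need an analogous uniform argument for this family; without it the case analysis does not close.
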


To be clear:
$(\SSS \otimes \wedge V^* \otimes V)^W$ is 
not {\it freely} generated as a 
module over $\bigwedge_{\SSS^W}\{df_1,\ldots,df_{\ell}\}$
by $\{ \theta_i, d\theta_i \}_{i=1}^\ell$.

\medskip

\begin{example}
\label{rank-one-example}
A rank $\ell=1$ reflection group
$W \subset \text{GL}(V)
=\text{GL}_1(\CC)=\CC^\times$ is a {\it cyclic group}
$
W=\langle \zeta \rangle \cong \ZZ/h\ZZ
$
for some $\zeta=e^{\frac{2\pi i}{h}}$ in $\CC$.
Let $V=\CC x$ and $V^*=\CC y$ with $y$ dual to $x$.
Under the generator of $W$,
$x \mapsto \zeta^{-1}x$ and $y \mapsto \zeta y$.
Then
$$
\SSS=\CC[x],\quad
\SSS^W=\CC[f_1] \quad\text{ where }f_1=f_\ell=x^h \text{ has degree }h.
$$
The $\SSS^W$-module of invariant forms,
$(\SSS \otimes \wedge V^*)^W$, is an exterior algebra over
$\SSS^W$ generated by $df_1=h x^{h-1} \otimes x$ 
of degree $e_1=h-1$, that is,
$$
\begin{array}{rccccccl}
(\SSS \otimes \wedge V^*)^W
 &=& (\SSS \otimes \wedge^0 V^*)^W & \oplus & (\SSS \otimes \wedge^1 V^*)^W &&\\
 &=& \underbrace{\SSS^W (1 \otimes 1)}_{:=R} 
                           & \oplus & \SSS^W (x^{h-1} \otimes x) & = & \SSS^W \oplus \SSS^W \, df_1.
\end{array}
$$
On the other hand, the $\SSS^W$-module
of invariant derivations is
$$(\SSS \otimes V)^W=\SSS^W (x\otimes y) = \SSS^W \theta_1,
\quad\quad \text{for } \theta_1= x \otimes y\, \
\quad\text{ of degree } 
e_1^*=1.
$$ 
In particular, $W$ is a duality group, since $e_1^*+e_1=1+(h-1)=\deg(f_\ell)$.
Now consider the invariant differential derivations:
An easy check confirms that
$M^W=(\SSS \otimes \wedge V^* \otimes V)^W$ is a free module
over $R=\SSS^W$ with basis 
$
\{ \theta_1, d \theta_1\}.
$
Here, we have identified $\theta_1= x\otimes y$ with $x \otimes 1 \otimes y$
and $d\theta_1= 1 \otimes x \otimes y$.
Thus
$$
(\SSS \otimes \wedge V^* \otimes V)^W 
= \SSS^W(x \otimes 1 \otimes y) \oplus \SSS^W( 1 \otimes x \otimes y)\, .$$
\end{example}

\medskip

\subsection*{Outline}
Section~\ref{further-context-section} provides further context and implications of
Theorem~\ref{main-result} while Section~\ref{Lie-theory-section}
gives its relation to some theorems and conjectures in Lie theory.
We collect some tools
for establishing helpful reflection group numerology
(like Molien's Theorem and the Gutkin-Opdam Lemma)
in Section~\ref{Molien-strategy-section}
and reap that numerology in Section~\ref{Numerology-section}.
Section~\ref{Saito-criterion-section} is a slight
digression deriving basis conditions reminiscent of Saito's Criterion for
free arrangements.
A linear independence condition is given in
Section~\ref{independence-section}. 
We complete the proof of Theorem~\ref{main-result} 
in Section~\ref{well-generated-section}
after showing that duality groups exhibit auspicious numerology
in Section~\ref{numerology-works-section}.

The remainder is about nonduality groups and 
Theorem~\ref{generation-result}.
Section~\ref{2-dimensional-groups-section}
addresses rank two reflection groups, while
Section~\ref{G31-section} considers the 
Shephard and Todd group $G_{31}$ --- the only irreducible non-duality
group that is neither of rank two, nor within the Shephard and Todd infinite
family of monomial groups $G(r,p,n)$.
Section~\ref{poorly-generated-section} assembles the
proof of Theorem~\ref{generation-result},
relegating the case of $G(r,p,n)$ to
Appendix~\ref{appendix-section}.  When $p=1$ or $p=r$,
these are duality groups and covered by
Theorem~\ref{main-result}; when $1 < p <r$,
they are nonduality groups and we give
an alternate basis for the
invariant differential derivations in this appendix.

We consider some further questions in Section~\ref{questions-section}.
\section{Implications of Theorem~\ref{main-result}}
\label{further-context-section}

To provide context for Theorem~\ref{main-result},
we first note a few consequences and
special cases.

\subsection*{The case of exterior degree zero}

Theorem~\ref{main-result}
implies something that we already knew about 
$$
(\SSS \ot \wedge^0 V^* \ot V)^W=(\SSS \ot V)^W,
$$
namely, that it is a free
$\SSS^W$-module on the basis $\{\theta_j \}_{j \in \{1,\ldots,\ell\}}$;
this is true even when $W$ is not a duality group.
We will end up using this fact in the proof of the theorem.

\subsection*{The case of top exterior degree}
At the opposite extreme, Theorem~\ref{main-result} asserts
that 
$$(\SSS \ot \wedge^\ell V^* \ot V)^W
\cong (S\ot \det \ot V)^W$$ 
is free
as an $\SSS^W$-module on the 
basis $\{df_1 \cdots df_{\ell-1}\, d\theta_k \}_{k \in \{1,\ldots, \ell\}}$.
This agrees 
with the polynomial degrees of a basis
found
in~\cite{Shepler} for any reflection group $W$,
as we may view $(S\ot \det \ot V)^W$ as
the space of invariant derivations
for the ``twisted reflection representation''
$\det \ot V$, where $\det: W\mapsto \CC^*$ is the determinant
character of $W$ acting on $V$.
Indeed, if $J=\det\left( \frac{\partial f_i}{\partial x_j}\right)$ is the Jacobian
determinant of the basic invariants $f_1,\ldots,f_\ell$ 
(see Section~\ref{J-and-Q-subsection}), then 
the forms $df_1\cdots df_{\ell-1}\, d\theta_k$ have degrees
$$e_1+\ldots+ e_{\ell-1}+ e_k^*-1
=(e_1+\ldots+ e_{\ell})- (e_{\ell}+1) + e_k^*
=\deg J - (h - e_k^*)
= \deg J - e_{k}
$$
when $W$ is a duality group 
(see~\cite[Cor.~13(e)]{Shepler}).

\subsection*{The Hilbert series consequence}
Theorem~\ref{main-result} has implications for 
{\it Hilbert series} 
analogous to those given by the 
Shephard-Todd-Chevalley and Solomon theorems
with $S=\oplus_{i \geq 0}\, S_i$ graded by polynomial degree.
Just as these classical results immediately imply that
$$
\begin{aligned}
\hspace{14ex}
\label{hilb-of-polynomial-invariants}
\Hilb(\SSS^W; q)
 &:=& &\! \! \! \sum_{i \geq 0} \quad\   q^i \dim \SSS^W_i 
 &\! \! =&  \ \prod_{i=1}^\ell (1-q^{e_i+1})^{-1},& \\ 
\label{hilb-of-derivations}
\Hilb\big( (\SSS \otimes V)^W; q \big) 
 &:=& &\! \! \! \sum_{i \geq 0} \quad\  q^i \dim \left( S_i \otimes V \right)^W 
 &\! \! =&  \ \Hilb(\SSS^W; q) \cdot \sum_{i=1}^\ell\ q^{e_i^*} \ ,&
\hfill\text{and}
\\
\label{hilb-of-exterior-invariants}
\Hilb \big( (\SSS \otimes \wedge V^*)^W; q,t \big) 
 &:=& &\! \! \! \sum_{i,j \geq 0} q^i\, t^j 
 \dim ( S_i \otimes \wedge^j V^* )^W 
 &\! \! =& \ \Hilb(\SSS^W; q)  \cdot \prod_{i=1}^\ell \left( 1+q^{e_i}t \right) ,&
\end{aligned}
$$
Theorem~\ref{main-result} analogously
immediately implies that
\begin{equation}
\label{Hilbert-series-consequence}
\begin{array}{lll}
\Hilb\left( \left(\SSS \otimes \wedge V^* \otimes V \right)^W; q,t\right)
&:=\ \ \displaystyle\sum_{i,j \geq 0} \, q^i \, t^j\, 
    \dim \left( S_i \otimes \wedge^j V^* \otimes V\right)^W \\
&\ =\ \ \Hilb(S^W; q,t) \cdot  \displaystyle\sum_{i=1}^\ell\ (q^{e_i^*}+q^{e_i^*-1}t) \\
&\ =\ \ \Hilb(S^W;q) \cdot (q+t)\cdot 
       \left( \displaystyle\prod_{i=1}^{\ell-1}\ 1+q^{e_i}t \right)
       \left( \displaystyle\sum_{i=1}^\ell\ q^{e_i^*-1} \right) \, .
\end{array}
\end{equation}
Our original motivation, in fact, was
the special case of~\eqref{Hilbert-series-consequence}
for real reflection groups $W$, which appeared 
as~\cite[Conj.~11.5${}^\prime$]{ArmstrongRRhoades},
based on Coxeter-Catalan combinatorics and computer experimentation.

\section{The Lie theory connection}
\label{Lie-theory-section}
We now explain how the case of Theorem~\ref{main-result} when $W$ is a {\it Weyl group},
i.e, a finite crystallographic real reflection group,
relates to Lie-theoretic results and 
work of Bazlov, Broer, 
Joseph, Reeder, and Stembridge, and also DeConcini, Papi, and Procesi. 
Let $G$ be a simply-connected, compact simple Lie group 
with a choice of maximal torus $T$.  Denote by $\lieg$ and $\lieh$ the
complexification of their corresponding Lie algebras,
and let $W:=N_G(T)/T$ be the associated Weyl group
acting on a real vector space $V$.  
Then $G$ acts on $\wedge\lieg^*$, while $W$ acts on $\SSS:=S(\lieh^*)$ and on its
{\it coinvariant algebra}
$$
\SSS/\SSS^W_+ \cong H^*(G/T)
$$
where the last isomorphism to cohomology, due to Borel, is grade-doubling,
and where $S^W_+$ is the ideal generated by invariant polynomials of positive
degree.
Classical results (see~\cite{Reeder1}) give isomorphisms
\begin{equation}
\label{Borel-Cartan-Leray-result}
\left(\wedge\lieg^*\right)^G 
\quad \cong \quad H^*(G) 
\quad \cong \quad \left( H^*(G/T) \otimes H^*(T) \right)^W
\quad \cong \quad 
\left( S/\SSS^W_+ \otimes \wedge\lieh^*\right)^W
\end{equation}
exhibiting both of these rings as (isomorphic) exterior algebras,
with exterior generators $P_1,P_2,\ldots,P_\ell$ where $P_i$ lies in
$\left(\wedge^{2e_i+1}\lieg^*\right)^G$.  The isomorphism
is again homogeneous after doubling the grading in $S/S_+^W$.

Reeder~\cite{Reeder2} conjectured a similar relation between $G$-invariants and $W$-invariants,
relating two Hilbert series associated with a finite-dimensional $G$-representation $M$:
$$
\begin{aligned}
P_G(M;t)
  &:= \sum_{j \geq 0} \, t^j \,
       \dim (\wedge^j \lieg^* \otimes M)^G \, ,
\\
P_W(M^T;q,t)
  &:= \sum_{i,j \geq 0} \,q^i\, t^j\, 
       \dim \left( 
                   (S/S^W_+)_i
                    \otimes \wedge^j\lieh^* \otimes M^T 
                \right)^W \, .
\end{aligned}
$$
\begin{conjecture}\cite[Conj.~7.1]{Reeder2}
\label{Reeder-conjecture}
If $M$ is {\bf small}, meaning its weight space 
$M_{2\alpha}=0$ for all roots $\alpha$, one has
$$
P_G(M;t)=P_W(M^T;q,t) |_{q=t^2}.
$$
\end{conjecture}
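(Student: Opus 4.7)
The plan is to upgrade the chain of isomorphisms~\eqref{Borel-Cartan-Leray-result} from trivial coefficients to an arbitrary small $G$-module $M$: I would try to establish a bigraded isomorphism
$$
(\wedge \lieg^* \ot M)^G \;\cong\; (\SSS/\SSS_+^W \ot \wedge\lieh^* \ot M^T)^W
$$
in which the polynomial grading on the right is doubled relative to the exterior grading on the left. Taking Hilbert series and substituting $q=t^2$ then yields Conjecture~\ref{Reeder-conjecture}, and the $M=\CC$ case recovers \eqref{Borel-Cartan-Leray-result}, so the task is to extend that isomorphism naturally in $M$.

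First I would construct a candidate map $\rho$ from left to right by a Chevalley--Harish-Chandra-type restriction to the Cartan. Using a $G$-invariant bilinear form to identify $\lieg^*\cong\lieg$, I would record an invariant form $\omega\in(\wedge\lieg^*\ot M)^G$ through its transverse jet along the Kostant section $\lieh\hookrightarrow\lieg$: tangential directions contribute an exterior form in $\wedge\lieh^*$, normal directions contribute polynomial coefficients (reduced modulo $\SSS_+^W$ since only the equivariant-cohomology class is well-defined), and $T$-equivariance forces the $M$-valued coefficients at Cartan points to lie in $M^T$. Equivalently, in the Weil-algebra model for equivariant cohomology this is restriction of structure group from $G$ to $T$, followed by projection of $M$ onto its zero-weight part. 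Its $M=\CC$ specialization is the classical restriction producing~\eqref{Borel-Cartan-Leray-result}.

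The main step is to verify that $\rho$ is a graded isomorphism, and I would attack this by matching Hilbert series. On the left, the Weyl integration formula together with $\text{ch}(\wedge^\bullet\lieg^*)|_T=(1+t)^\ell\prod_\alpha(1+te^\alpha)$ gives a sum-over-$W$ expression for $P_G(M;t)$. On the right, the Shephard--Todd--Chevalley formula $\Hilb(\SSS/\SSS_+^W;q)=\prod_i(1-q^{e_i+1})/(1-q)$ combined with Molien produces a parallel sum-over-$W$ expression for $P_W(M^T;q,t)$. After the substitution $q=t^2$, the two expressions should match term-by-term in $w\in W$, which reduces to a character identity relating a $\prod_\alpha(1+te^\alpha)$-weighted trace of $w$ on the full weight-decomposition of $M$ to $\text{tr}(w|_{M^T})$.

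The principal obstacle is precisely this last character identity, in which the smallness hypothesis $M_{2\alpha}=0$ must intervene decisively. The factors $1+te^\alpha$ coming from $\wedge^\bullet\lieg^*$ threaten to pair with a hypothetical weight $2\alpha$ in $\text{ch}(M)$ to produce spurious contributions that are not absorbed by the Weyl denominator $\prod_\alpha(1-e^\alpha)$, and the small condition is exactly what rules these out. I expect the cleanest case-free route exploits Broer's description of small representations via generalized exponents, together with the Kostant-type freeness statement $(\SSS\ot M)^W\cong\SSS^W\ot(\SSS/\SSS_+^W\ot M)^W$ known to hold precisely when $M$ is small; the exterior upgrade should then follow by the same flatness mechanism. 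An alternative approach is to write down explicit bases on both sides via a $W$-equivariant Koszul or BGG-type resolution and match them degree-by-degree, in the spirit of Bazlov's and DeConcini--Papi--Procesi's work cited above.
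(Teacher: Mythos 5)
This statement is not proved in the paper at all: it is Reeder's Conjecture~7.1, reproduced verbatim for context and motivation. The paper records only that special cases are known (the trivial module via \eqref{Borel-Cartan-Leray-result}, the $t=1$ specialization due to Reeder and Kostant, type $A$ via Stembridge's first-layer formulas) and that the general statement has been verified case-by-case in unpublished work of DeConcini--Papi and of Stembridge. So there is no ``paper's proof'' to compare against, and a correct blind proof would be a new contribution.

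Your proposal, as written, does not close the conjecture. Its first step is to establish the bigraded isomorphism
$(\wedge\lieg^*\ot M)^G\cong(\SSS/\SSS^W_+\ot\wedge\lieh^*\ot M^T)^W$,
but that is precisely the Enhanced Reeder Conjecture~\ref{extended-Reeder-conjecture}, which is itself open in general (the paper reports uniform proofs only for the adjoint and little adjoint representations). You cannot prove Conjecture~\ref{Reeder-conjecture} by assuming a strictly stronger open statement. The alternative route you sketch --- matching Weyl-integration and Molien sums term-by-term after $q=t^2$ --- founders exactly where you say it does: the needed character identity, in which smallness must kill the contributions of $\prod_\alpha(1+te^\alpha)$ pairing against weights of $M$ outside $M^T$, is the entire content of the conjecture, and you give no argument for it. (Indeed a naive term-by-term matching over $w\in W$ cannot work as stated, since the left side is a sum over $G$ or over conjugacy classes of $T$, not over $W$, and the identification of the two sums is where all the difficulty lives.) The appeal to Broer's freeness criterion for small modules is a reasonable place to look, but ``the exterior upgrade should then follow by the same flatness mechanism'' is an assertion, not a proof: the passage from $(\SSS\ot M)^W$ being $\SSS^W$-free to control of $(\SSS\ot\wedge\lieh^*\ot M^T)^W$ versus $(\wedge\lieg^*\ot M)^G$ is exactly the open problem. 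What your outline does correctly identify is the known scaffolding (the $M=\CC$ case, the $t=1$ case, Broer's restriction theorem) and the precise location of the missing idea; but the missing idea is missing.
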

\noindent
Various special cases of Conjecture~\ref{Reeder-conjecture} were known at the time that it was formulated.  For example, when $M$ is the trivial $G$-representation it follows
from \eqref{Borel-Cartan-Leray-result} above. 
Reeder~\cite[Cor.~4.2]{Reeder2} proved the $t=1$
specialization of Conjecture~\ref{Reeder-conjecture} and
credited it also to Kostant:   
For $M$ small, 
\begin{equation}
\label{Kostant-Reeder-ungraded-theorem}
\dim (\wedge \lieg^* \otimes M)^G 
=P_G(M;t)|_{t=1}
=P_W(M^T;q,t) |_{q=t=1}
=\dim \left( S/S^W_+ \otimes \wedge\lieh^* \otimes M^T \right)^W.
\end{equation}
The type $A$ special case was also known to follow from the ``first-layer'' formulas of Stembridge~\cite{Stembridge}.  The type $B$ special case was 
recently\footnote{DeConcini and Papi, personal communication, 2016.} 
verified in work of DeConcini and Papi, and 
Stembridge
independently\footnote{Stembridge, personal communication, 2016.}   
verified 
Conjecture~\ref{Reeder-conjecture} case-by-case
for all types.

A further bit of motivation comes from a generalization of
Chevalley's restriction theorem due to Broer~\cite{Broer}.  Chevalley's result asserts
that restriction $\lieg^* \rightarrow \lieh^*$ induces an isomorphism of polynomial rings
\begin{equation}
\label{Chevalley-restriction-theorem}
S(\lieg^*)^G \rightarrow \SSS^W,
\end{equation}
while Broer~\cite{Broer} showed more generally that, for any small $G$-module $M$, restriction
also induces an isomorphism 
(of modules over the polynomial rings in~\eqref{Chevalley-restriction-theorem})
$$
(S(\lieg^*) \otimes M)^G \rightarrow (\SSS \otimes M^T)^W.
$$

Broer's result 
suggested to the authors the following enhanced
version of Conjecture~\ref{Reeder-conjecture}.

\begin{conjecture}
\label{extended-Reeder-conjecture}
{(Enhanced Reeder Conjecture)}
For a small $G$-representation $M$, there is an isomorphism 
$$
(\wedge \lieg^* \otimes M)^G 
\cong
\left(  S/S^W_+ \otimes \wedge\lieh^* \otimes M^T \right)^W
$$
of modules over the exterior algebra in~\eqref{Borel-Cartan-Leray-result}
which is degree-preserving after doubling the grading in $S/S^W_+$.
\end{conjecture}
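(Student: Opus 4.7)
The plan is to establish the conjecture by lifting Broer's theorem~\eqref{Chevalley-restriction-theorem} from the polynomial setting to the full differential-form setting, and then projecting to the coinvariant quotient.

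First I would attempt to prove an \emph{enhanced Broer restriction theorem}: for any small $G$-representation $M$, the restriction $\lieh \hookrightarrow \lieg$ combined with the $T$-equivariant projection $M \twoheadrightarrow M^T$ induces an isomorphism
$$
(S(\lieg^*) \otimes \wedge \lieg^* \otimes M)^G \longrightarrow (\SSS \otimes \wedge \lieh^* \otimes M^T)^W
$$
of bigraded modules over $S(\lieg^*)^G \cong \SSS^W$. The key tool is Kostant's harmonic decomposition $S(\lieg^*) \cong S(\lieg^*)^G \otimes \mathcal{H}_\lieg$, which reduces matters to understanding $(\mathcal{H}_\lieg \otimes \wedge \lieg^* \otimes M)^G$; smallness of $M$ is exactly what makes Broer's zero-weight-space argument go through in the presence of the extra exterior tensor factor.

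Second, I would pass from this enhanced Broer statement to Conjecture~\ref{extended-Reeder-conjecture} by quotienting both sides by the Hilbert ideal $\SSS^W_+$. On the right this yields directly $(\SSS/\SSS^W_+ \otimes \wedge \lieh^* \otimes M^T)^W$. On the left, combining the harmonic decomposition with a Kostant-type identification $(\mathcal{H}_\lieg \otimes U)^G \cong U^T$ for small $U = \wedge \lieg^* \otimes M$, the quotient collapses to $(\wedge \lieg^* \otimes M)^G$. Throughout, the $(\wedge \lieg^*)^G$-module structure matches the module structure in~\eqref{Borel-Cartan-Leray-result} because the transgression generators $P_i$ on the left correspond under restriction to the Solomon generators $df_i$ on the right.

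Third, the ungraded identity~\eqref{Kostant-Reeder-ungraded-theorem}, together with the now type-by-type verified Hilbert-series version of Conjecture~\ref{Reeder-conjecture} (Stembridge, DeConcini-Papi), confirms that the natural map so constructed has matching total dimensions on both sides, so it suffices to prove injectivity or surjectivity. For $M=\lieg$, Theorem~\ref{main-result} provides an explicit free basis $\{\theta_i, d\theta_i\}$ of the right-hand side (applied to the Weyl group's reflection representation $V = \lieh$), and one may hope to find canonical preimages in $(\wedge \lieg^* \otimes \lieg)^G$ built from the primitive forms $P_i$ contracted via the Killing form, giving a concrete handle on this case; for a general small $M$, tensor-closure of the small-representation subcategory should then allow one to propagate the isomorphism from the adjoint case.

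The main obstacle is the enhanced Broer step. Broer's original proof already requires a delicate harmonic-decomposition and Koszul-complex argument; adjoining the exterior factor $\wedge \lieg^*$ introduces an anticommuting layer that must be compatible with both the $G$-action and the Koszul differential associated with $\SSS \to \SSS/\SSS^W_+$. I expect this Koszul compatibility, and pinpointing precisely why smallness is the right hypothesis to ensure it, is the genuine technical heart of the conjecture, and is the reason a uniform, case-free proof has so far eluded researchers even as the Hilbert-series shadow has yielded to case-by-case verification.
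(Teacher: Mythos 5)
You are attempting to prove a statement that the paper itself presents only as a conjecture: Conjecture~\ref{extended-Reeder-conjecture} is open, the authors give no proof, and they record only that DeConcini and Papi have established it (in unpublished work) for the adjoint and little adjoint representations, while the weaker Hilbert-series statement (Reeder's Conjecture~\ref{Reeder-conjecture}) has been verified case-by-case. So there is no argument in the paper to compare against, and your proposal must stand on its own. It does not: it is a research program with a genuine gap at its center, which you partly acknowledge but underestimate.

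The gap is in your first two steps, and it is structural rather than merely technical. The ``enhanced Broer'' isomorphism $(S(\lieg^*)\otimes\wedge\lieg^*\otimes M)^G\to(\SSS\otimes\wedge\lieh^*\otimes M^T)^W$ is not a formal extension of Broer's theorem: the coefficient module $\wedge\lieg^*\otimes M$ is not small (already $\wedge\lieg^*$ acquires weights $2\alpha$ from triples of roots such as $\alpha,\ \alpha+\beta,\ -\beta$), its zero-weight space is far larger than $\wedge\lieh^*\otimes M^T$, and Broer's restriction map generally fails to be bijective without smallness of the full coefficient module --- so smallness of $M$ alone does not make the ``zero-weight-space argument go through.'' More fundamentally, any such restriction isomorphism would preserve polynomial degree and exterior degree separately, whereas the conjectured isomorphism, like \eqref{Borel-Cartan-Leray-result}, is a transgression-type statement that trades polynomial degree on the right for exterior degree on the left ($P_i$ sits in exterior degree $2e_i+1$ while $df_i$ has bidegree $(e_i,1)$). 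Quotienting a bidegree-preserving map by $\SSS^W_+$ cannot manufacture this degree conversion; your appeal to a Kostant-type identification $(\mathcal{H}_\lieg\otimes U)^G\cong U^T$ to collapse the left-hand quotient onto $(\wedge\lieg^*\otimes M)^G$ is precisely where the whole difficulty of Reeder's conjecture lives, and as written it assumes the conclusion. Finally, in your third step the ungraded identity \eqref{Kostant-Reeder-ungraded-theorem} controls only total dimension, not the bigraded dimensions needed to upgrade an injection or surjection to a graded isomorphism (that refinement is Conjecture~\ref{Reeder-conjecture} itself, known only case-by-case), and small representations are not closed under tensor product ($\lieg\otimes\lieg$ already contains the weight $2\alpha$), so the proposed propagation from the adjoint case does not get off the ground.
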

While this paper was under review,
DeConcini and Papi~\cite[p.~259]{DeConciniPapi} showed that {\em not all} small $G$-representations $M$
satisfy Conjecture~\ref{extended-Reeder-conjecture}, 
and one asks, ``For which small $G$-representations does the Enhanced Reeder Conjecture~\ref{extended-Reeder-conjecture} hold?"
DeConcini and Papi~\cite[Thm.~2.2 and Cor.~6.6]{DeConciniPapi} 
proved the conjecture holds for two particularly important cases
of small $G$-representations,
namely, the {\it adjoint representation}\footnote{It is exactly in the case of the adjoint representation
that Bazlov~\cite{Bazlov} proved Conjecture~\ref{Reeder-conjecture}, and he credits this special case of
the conjecture to Joseph~\cite{Joseph}.}  $M=\lieg$ and the {\it little adjoint representation},
which are the $\lieg$-irreducibles whose highest weights are the highest root and highest short root, respectively.  The adjoint case  $M=\lieg$ connects 
our work to the following result of DeConcini, Papi, and Procesi.

\begin{theorem}{\cite[Thm~1.1]{DPP}}
\label{DPP-theorem}
Regard
$
(\wedge \lieg^* \otimes \lieg)^G
$
as a module over the exterior subalgebra $R$ 
of $(\wedge \lieg^*)^G$ generated by $P_1,P_2,\ldots,P_{\ell-1}$,
via multiplication in the first tensor factor.
Then $
(\wedge \lieg^* \otimes \lieg)^G
$
is free as an $R$-module, with 
basis elements $\{f_i,u_i\}_{i=1}^\ell$ of degrees
$\deg(f_i)=2e_i, \deg(u_i)=2e_i-1$.
\end{theorem}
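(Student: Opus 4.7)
The plan is to derive Theorem~\ref{DPP-theorem} by transporting the free-module decomposition of Theorem~\ref{main-result}, applied to the Weyl group $W$, across the Enhanced Reeder Conjecture~\ref{extended-Reeder-conjecture} for the adjoint representation $M=\lieg$ (whose torus-fixed subspace is $M^T=V$). Weyl groups are real, hence duality, reflection groups, so Theorem~\ref{main-result} supplies the $R$-free decomposition of $(\SSS\otimes\wedge V^*\otimes V)^W$ on the basis $\{\theta_i, d\theta_i\}_{i=1}^\ell$, with $R=\bigwedge_{\SSS^W}\{df_1,\ldots,df_{\ell-1}\}$. The goal is to slide this freeness onto $(\wedge\lieg^*\otimes\lieg)^G$ over the analogous exterior subalgebra generated by $P_1,\ldots,P_{\ell-1}$.

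First I would reduce modulo the irrelevant ideal $\SSS^W_+$. Because $R$-freeness entails $\SSS^W$-freeness, extending scalars along $\SSS^W\to\CC$ preserves the direct-sum decomposition, and exactness of $(-)^W$ in characteristic zero lets this commute with taking invariants. This gives
$$
\bigl(\SSS/\SSS^W_+\otimes\wedge V^*\otimes V\bigr)^W \;\cong\; \bigoplus_{i=1}^\ell \bigl(\bar R\cdot \overline{\theta_i}\,\oplus\,\bar R\cdot \overline{d\theta_i}\bigr),
$$
where $\bar R := R\otimes_{\SSS^W}\CC = \bigwedge_\CC\{\overline{df_1},\ldots,\overline{df_{\ell-1}}\}$ is a finite-dimensional exterior algebra. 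Next I would invoke the Enhanced Reeder Conjecture for $M=\lieg$ (uniformly proven by DeConcini and Papi), which yields a grade-doubling isomorphism
$$
(\wedge\lieg^*\otimes\lieg)^G \;\cong\; \bigl(\SSS/\SSS^W_+\otimes\wedge V^*\otimes V\bigr)^W
$$
of modules over the exterior algebra from~\eqref{Borel-Cartan-Leray-result}, matching $P_i\leftrightarrow\overline{df_i}$. Transporting the free-module structure produces the $R$-freeness claimed in Theorem~\ref{DPP-theorem} on the $2\ell$ transported basis elements.

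For the degree matching, I would compute under the doubled grading on $\SSS/\SSS^W_+$, using that $e_i^*=e_i$ for real reflection groups (since $V\cong V^*$ as $W$-modules): the transported image of $\theta_i$ acquires total degree $2e_i^*=2e_i$, matching the asserted $\deg f_i$ in the DPP basis, while $d\theta_i$ acquires total degree $2(e_i^*-1)+1=2e_i-1$, matching $\deg u_i$. The principal obstacle is the Enhanced Reeder Conjecture in the adjoint case itself, a deep geometric input supplied by DeConcini-Papi; once that is granted, the remainder is formal coinvariant-reduction and degree-bookkeeping. An intrinsic Lie-algebra-side argument via transgression or Koszul techniques might be feasible, but the reflection-group dictionary offers the cleanest route once Theorem~\ref{main-result} is in hand.
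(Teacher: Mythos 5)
Your proposal is correct and is essentially the paper's own route: the paper explicitly notes that Theorem~\ref{DPP-theorem} follows from Theorem~\ref{main-result} by passing to the coinvariant algebra (modding out by $\SSS^W_+$), invoking the adjoint case of the Enhanced Reeder Conjecture~\ref{extended-Reeder-conjecture} as proved by DeConcini--Papi, and using $\{e_i^*\}=\{e_i\}$ for Weyl groups. Your degree bookkeeping under the grade-doubling ($2e_i^*$ for $\theta_i$ and $2(e_i^*-1)+1=2e_i-1$ for $d\theta_i$) correctly fleshes out the one-sentence sketch the paper gives.
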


\noindent
An alternate proof of Theorem~\ref{DPP-theorem} follows from
our Theorem~\ref{main-result}, using 
the adjoint special case of Conjecture~\ref{extended-Reeder-conjecture}, 
that is, \cite[Thm. 2.2]{DeConciniPapi},
after modding out by $S^W_+$
and bearing in mind that 
$\{e^*_i\}_{i=1}^\ell=\{e_i\}_{i=1}^\ell$ for Weyl groups $W$.

\section{Degree sums and the Gutkin-Opdam Lemma}
\label{Molien-strategy-section}

Before determining bases for the invariant differential derivations, 
we recall some tools for investigating the relevant numerology, most notably a 
useful lemma for finding the sum of degrees in a basis.

\subsection*{Degree sum}
Let $k$ be a field, and let $A$ be a graded $k$-algebra and integral domain. 
Consider a free graded $A$-module 
$M \cong A^p$ of finite rank, say
with homogeneous basis $m_1,\ldots, m_p$.
The (unordered) list of degrees 
$\deg(m_1),\ldots,\deg(m_p)$ 
are uniquely determined by
the quotient of Hilbert series
$$
\sum_{i=1}^p q^{\deg(m_i)}=
\Hilb(M,q)/\Hilb(A,q).
$$
Thus we may assign to any such $M$ the {\em degree sum}
$$
\Delta_A(M):=\sum_{i=1}^p \deg(m_i) = 
\left[ \sfrac{\partial}{\partial q} 
\frac{\Hilb(M,q)}{\Hilb(A,q)} \right]_{q=1}.
$$
If one knows this degree sum {\em a priori},
then one may determine an explicit $A$-basis for $M$
by just checking independence over the 
fraction field:

\begin{lemma}
\label{commutative-algebra-lemma}
Let $A$ be a graded $k$-algebra and integral domain
and $M \cong A^p$ a free graded $A$-module.
A set of homogeneous
elements $\{n_1, \ldots, n_p\}$ in $M$ with 
$\sum_{i=1}^p \deg(n_i)=\Delta_A(M)$ 
is an $A$-basis
for $M$ if and only if it is linearly independent over the fraction field $K=\Frac(A)$.
\end{lemma}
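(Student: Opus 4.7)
The forward direction is immediate: if $\{n_1,\ldots,n_p\}$ is an $A$-basis of $M$, then after base change to $K=\Frac(A)$ it is a $K$-basis of $M\otimes_A K\cong K^p$, hence linearly independent over $K$.

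For the substantive direction, the plan is to study the transition matrix between the proposed generators and a known basis, and use the degree-sum hypothesis to force its determinant to be a unit. I would first fix a homogeneous $A$-basis $\{m_1,\ldots,m_p\}$ of $M$ (which exists by the assumption $M\cong A^p$ as graded $A$-modules), and expand $n_j=\sum_i a_{ij}\,m_i$ with $a_{ij}\in A$ homogeneous of degree $\deg(n_j)-\deg(m_i)$ (or zero). Let $T=(a_{ij})$. In the Leibniz expansion of $\det(T)$, every term $\prod_j a_{\sigma(j),j}$ has total degree $\sum_j \deg(n_j)-\sum_i \deg(m_i) = \Delta_A(M)-\Delta_A(M) = 0$, so $\det(T)$ is homogeneous of degree zero. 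The $K$-linear independence of $\{n_j\}$ forces $\det(T)\ne 0$; since $A$ is a graded integral domain and the degree-zero piece $A_0$ is a finite-dimensional integral domain over $k$ (implicit in the use of Hilbert series), $A_0$ is a field. Hence $\det(T)\in A_0^{\times}$ is a unit in $A$, Cramer's rule produces $T^{-1}$ with entries in $A$, and inverting the transition expresses each $m_i$ as an $A$-linear combination of the $n_j$'s. Thus $\{n_1,\ldots,n_p\}$ generates $M$ over $A$, and combined with its $K$-independence it is an $A$-basis.

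The key observation, and the only point needing any real thought, is that the degree-sum hypothesis is precisely what cuts the degree of $\det(T)$ down to zero. This is what upgrades a generic (over $K$) invertibility statement into genuine invertibility of $T$ over $A$, and I do not anticipate any further obstacle; the rest is a formal manipulation of homogeneous matrices.
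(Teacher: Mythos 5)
Your proof is correct, and while it shares the paper's basic setup (expand the $n_j$ in a homogeneous basis $\{m_i\}$ and study the transition matrix), it closes the argument by a genuinely different mechanism. The paper never computes the degree of the determinant: it extracts a single nonzero permutation term from the Leibniz expansion, re-indexes so that $\deg(n_i)=\deg(b_{i,i})+\deg(m_i)$, uses the degree-sum hypothesis together with nonnegativity of degrees to force $\deg(n_i)=\deg(m_i)$ for every $i$, and then observes that after sorting by degree the matrix is block upper triangular with diagonal blocks invertible over $k$. You instead note that \emph{every} nonzero Leibniz term has degree $\sum_j\deg(n_j)-\sum_i\deg(m_i)=0$, so $\det(T)$ is a nonzero element of $A_0$, hence a unit once one knows $A_0$ is a field, and the adjugate then inverts $T$ over $A$. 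Your route is shorter and does not need the nonnegative grading (only that $A_0$ is a field, which you correctly extract from the finite-dimensionality implicit in the Hilbert-series hypothesis, since a finite-dimensional $k$-algebra that is a domain is a field); the paper's route yields the extra information that the degrees of the $n_i$ must individually coincide with those of the $m_i$ after reordering. Both are complete proofs; no gap in yours.
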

\begin{proof}
The forward implication is clear.  For the reverse implication, note that
linear independence is equivalent to
nonsingularity of the matrix $B$ in $A^{p \times p}$ with $\nnn = B \mmm$,
for  $\mathbf{n}=[n_1,\ldots,n_p]^T$ and $\mathbf{m}=[m_1,\ldots,m_p]^T$.
Since $\det(B) \neq 0$, its expansion contains a nonzero term 
indexed by a permutation $\pi$ 
with $b_{ i , \pi(i) } \neq 0$ for each $i=1,2,...,p$;
after re-indexing, 
one may assume $\pi$ is the identity permutation.
Hence $\deg(n_i) =\deg( b_{ i , i} ) + \deg(m_i)$ 
for $i=1,\dots,p$.
Since $\sum_i \deg(n_i)=\Delta_A(M)=\sum_i \deg(m_i)$,
one has that $\deg(n_i)=\deg(m_i)$ for each $i$.   Thus after re-ordering the
rows and columns of $B$ in increasing order of degree, $B$ will be block
upper triangular, with each diagonal block an invertible matrix 
with entries in $k$.
Therefore $B$ gives an $A$-module automorphism of $M$  sending the
$A$-basis $\mmm$ to $\nnn$.
\end{proof}

\subsection*{Modules over the Invariant Ring}
As mentioned in the Introduction, 
a result of Hochster and Eagon
implies that for any 
representation $U$ of a complex reflection group $W$,
the set of all {\it relative invariants} $M=(\SSS \ot U)^W$ is 
a free module of finite rank $p=\dim_\CC U$
over the graded $k$-algebra $A=S^W$.
We introduce an abbreviation for the above degree sum:
\begin{definition}
\label{Saito-degree-def}
Let $W$ be a complex reflection group.
For any $W$-representation $U$, set
$$
\Delta(U):=\Delta_{S^W}\big( (S\ot U)^W\big)\, 
=\sum_{1\leq i \leq p}\deg\psi_i 
$$
for any 
$\SSS^W$-basis $\{\psi_i\}_{i=1}^p$ of $(\SSS\ot U)^W$.
\end{definition}

\subsection*{Local Data}
We next review an {\it a priori} calculation for the degree sum $\Delta(U)$, 
Lemma~\ref{Opdam-lemma} below, due originally to Gutkin~\cite{Gutkin},
and later rediscovered by Opdam~\cite[Lemma~2.1]{Opdam};
see also Brou\'e~\cite[Prop. 4.3.3 and eqn. (4.6)]{Broue}.

The formula for $\Delta(U)$ is expressed in what is sometimes called the {\it local data}
for $U$ at each reflecting hyperplane $H$ of $W$.
The pointwise stabilizer subgroup $W_H$ in $W$ of $H$ is cyclic,
say of order $e_H$; 
note that $e_H$ is the maximal order of a reflection in $W$ 
fixing $H$ pointwise.  The $W_H$-irreducible representations 
are the powers $\{\det^j\}_{j=0}^{e_H-1}$
of the $1$-dimensional (linear) 
character $\det:= \det \downarrow_{W_H}^{W}$ 
restricted from $W$ to $W_H$ acting on $V$.  
It is convenient to introduce
the {\it representation ring} 
$$R(W_H):=\ZZ[v]/(v^{e_H}-1),$$ where $v^j$ represents
the class of the
$1$-dimensional representation $\det^j$,
and define a $\ZZ$-linear
functional  
$$D_H: R(W_H) \rightarrow \ZZ,
\quad v^j  \mapsto  j\, .$$
Then for any $W$-representation $U$, 
the functional $\Hderiv$ on the restricted representation
$U\downarrow_{W_H}^{W}$
can be expressed in terms of the inner products
$
\mu_{H,j}:= \langle U\downarrow_{W_H}^{W}, {\det}^j \rangle_{W_H}$ as
$$
\Hderiv \left(U\downarrow_{W_H}^{W} \right)=\sum_{j=0}^{e_H-1} j \cdot \mu_{H,j}\, .
$$

\begin{lemma}(Gutkin-Opdam Lemma)
\label{Opdam-lemma} 
Let $U$ be a representation
of a complex reflection group $W$.
Then
$$
\Delta(U) = \sum_H \Hderiv \left(U\downarrow_{W_H}^{W}\right),
$$
where the sum runs over all reflecting hyperplanes $H$ for $W$.
\end{lemma}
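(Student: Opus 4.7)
The plan is to express $\Delta(U)$ as the $q=1$ value of the derivative of a Hilbert-series quotient, apply Molien's formula~\eqref{standard-Molien-variant} to the top and bottom, and identify the contribution of each $w\in W$ by the order of the pole it produces at $q=1$.

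By the definition of $\Delta$,
$$
\Delta(U)=\left[\tfrac{\partial}{\partial q}F(q)\right]_{q=1},\qquad F(q):=\frac{\Hilb\big((\SSS\ot U)^W;\,q\big)}{\Hilb(\SSS^W;\,q)}.
$$
Setting $x:=1-q$, each summand $\chi_U(w^{-1})/\det(1-qw)$ of~\eqref{standard-Molien-variant} contributes a pole at $q=1$ of order $\dim V^w$.  Only $w=1$ produces a pole of order $\ell$, and only reflections $s$ additionally produce poles of order $\ell-1$; all other $w$ contribute at most to poles of order $\ell-2$.  So both Molien series take the shape $x^{-\ell}\!\left(c_0+c_1x+O(x^2)\right)$, with $c_0,c_1$ determined entirely by $w=1$ and by the reflections.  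Writing $\zeta_s:=\det(s|_V)$ for the unique nontrivial eigenvalue of a reflection $s$, direct computation yields
$$
a_\ell=\tfrac{\dim U}{|W|},\quad b_\ell=\tfrac{1}{|W|},\quad a_{\ell-1}=\tfrac{1}{|W|}\sum_{s\text{ refl}}\tfrac{\chi_U(s^{-1})}{1-\zeta_s},\quad b_{\ell-1}=\tfrac{1}{|W|}\sum_{s\text{ refl}}\tfrac{1}{1-\zeta_s}.
$$

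Taking the ratio, expanding to first order in $x$, and using $\partial_q=-\partial_x$ produces
$$
\Delta(U)=\frac{a_\ell\, b_{\ell-1}}{b_\ell^2}-\frac{a_{\ell-1}}{b_\ell}=\sum_{s\text{ refl}}\frac{\dim U-\chi_U(s^{-1})}{1-\zeta_s}.
$$
Regrouping by reflecting hyperplane, I write $W_H=\langle s_H\rangle$ with $\det(s_H)=\omega:=e^{2\pi i/e_H}$, so that the reflections fixing $H$ are $s_H^j$ for $j=1,\dots,e_H-1$, with $\zeta_{s_H^j}=\omega^j$.  Decomposing the restriction as $U\downarrow_{W_H}^W=\bigoplus_{k=0}^{e_H-1}\mu_{H,k}\,{\det}^k$, the contribution of $H$ becomes
$$
\sum_{k=0}^{e_H-1}\mu_{H,k}\sum_{j=1}^{e_H-1}\frac{1-\omega^{-jk}}{1-\omega^j}.
$$
Telescoping $(1-\omega^{-jk})/(1-\omega^j)=\sum_{m=0}^{e_H-k-1}\omega^{jm}$ and then summing over $j$ collapses the inner sum to exactly $k$, so the $H$-contribution equals $\sum_k k\,\mu_{H,k}=\Hderiv(U\downarrow_{W_H}^W)$, establishing the lemma.

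The main obstacle I foresee is purely one of careful bookkeeping: verifying that every $w$ with $\dim V^w\le\ell-2$ affects neither $a_\ell,a_{\ell-1}$ nor $b_\ell,b_{\ell-1}$, and cleanly executing the roots-of-unity identity.  Beyond those two calculations, the result is forced by Molien's formula together with the Hilbert-series interpretation of $\Delta(U)$.
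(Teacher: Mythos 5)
Your proof is correct, and while it shares the paper's overall skeleton---express $\Delta(U)$ as $[\partial_q(\Hilb((\SSS\ot U)^W;q)/\Hilb(\SSS^W;q))]_{q=1}$, use Molien's formula, observe that only the identity and the reflections survive the pole-order analysis at $q=1$, and then regroup by reflecting hyperplane---the two computational stages are executed differently, and your endgame is arguably cleaner. For the first stage, the paper multiplies through by $\prod_i(1-q^{e_i+1})=1/\Hilb(\SSS^W;q)$ so that the expression becomes linear in the Molien sum, and then differentiates each term by the Leibniz rule; you instead Laurent-expand the ratio of the two Molien series in $x=1-q$ and read off the first-order coefficient via $a_\ell b_{\ell-1}/b_\ell^2 - a_{\ell-1}/b_\ell$. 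These are equivalent, but your route packages the identity and reflection contributions into the single expression $\sum_s(\dim U-\chi_U(s^{-1}))/(1-\zeta_s)$, whereas the paper carries a separate term $\tfrac{\dim U}{2}\sum_i e_i$ that must be cancelled at the very end using $\sum_H(e_H-1)=N=\sum_i e_i$ (Equation~\eqref{Shephard-Todd-sum-of-exponents-fact}). For the second stage, the paper evaluates the per-hyperplane sum via the cyclic-group Molien identity $\sum_{w\in W_H}\det(w)^{-j}/(1-\det(w)q)=q^j/(1-q^{e_H})$ followed by L'H\^opital, while you use the elementary telescoping $(1-\omega^{-jk})/(1-\omega^j)=\sum_{m=0}^{e_H-k-1}\omega^{jm}$ and the orthogonality of roots of unity to collapse the inner sum to $k$ directly; both give $\sum_k k\,\mu_{H,k}=\Hderiv(U\downarrow_{W_H}^W)$. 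Your version avoids any appeal to the Shephard--Todd numerology and to L'H\^opital, at the cost of the (routine) roots-of-unity bookkeeping; the paper's version reuses machinery (Molien for $W_H$, Example~\ref{rank-one-example}) already present elsewhere in the text. Both are complete proofs.
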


Lemma~\ref{Opdam-lemma} can be deduced (see Brou\'e \cite[\S 4.5.2]{Broue}) from the following
standard variant of Molien's theorem.  In its statement, $\Tr$ indicates trace.
\begin{lemma} \cite[Lem. 3.28]{Broue}
For any $W$-representation $U$,
\begin{equation}
\label{standard-Molien-variant}
\Hilb\left((S \ot U)^W,q \right)
=\tfrac{1}{|W|\rule{0ex}{1.5ex}} \sum_{w \in W} \frac{\Tr_U(w^{-1})}{\det(1-qw)}\ .
\end{equation}
\end{lemma}
For a complex reflection group $W$, taking $U$ to be the trivial representation in Lemma~\ref{standard-Molien-variant}
immediately implies the well-known fact that
$$
\prod_{i=1}^\ell \frac{1}{1-q^{e_i+1}}
=\Hilb(S^W,q)
=\tfrac{1}{|W|} \sum_{w \in W} \frac{1}{\det(1-qw)}\ 
$$
as $S^W=\CC[f_1,\ldots,f_\ell]$ with
$\deg(f_i)=e_i+1$.
As noted by Shephard and Todd \cite[\S8]{ShephardTodd}, 
comparing the first two coefficients in the
Laurent expansions about $q=1$ on the left and right immediately
gives these facts:
\begin{eqnarray}
\label{Shephard-Todd-product-fact}
|W|&=&\prod_{i=1}^\ell (e_i+1),\\
\label{Shephard-Todd-sum-of-exponents-fact}
N:= \#\{\text{reflections in }W\} &=&
\sum_{i=1}^\ell e_i\, .
\end{eqnarray}

\section{Numerology from the Gutkin-Opdam lemma}
\label{Numerology-section}
Again, $W$ is a complex reflection group.
This section harvests numerology from Lemma~\ref{Opdam-lemma}.

\subsection*{Number of Reflecting Hyperplanes}
We first see  that 
Lemma~\ref{Opdam-lemma} implies that the coexponents $e^*_i$
sum to the number $N^*$ of reflecting hyperplanes for $W$.

\begin{example}
Let 
$U=V$, the reflection representation.  Each reflecting hyperplane $H$ has
$\mu_{H,j}(V)=0$ for $j \neq 0,1$, with $\mu_{H,0}=\ell-1$ and $\mu_{H,1}=1$,
and hence $\Hderiv(V\downarrow_{W_H}^{W})=1$.
Thus in this case, Lemma~\ref{Opdam-lemma} implies that the coexponents $e^*_i:=\deg(\theta_i)$ for the 
$S^W$-basis $\{\theta_i\}_{i=1}^\ell$ of $(S \ot V)^W$ satisfy the well-known formula (e.g., see~\cite[p.~130]{BMR})
\begin{equation}
\label{sum-of-coexponents-is-N*}
N^*:=
\#\{\text{reflecting hyperplanes in }W\}
= \sum_H 1 
=\Delta(V) 
=
\sum_{i=1}^\ell e^*_i\, .
\end{equation}
\end{example}

\subsection*{Graded representations}
In order to apply Lemma~\ref{Opdam-lemma} to {\it graded} 
$W$-representations, 
we consider the {\it graded} representation ring
$R(W_H)[[t]]$,
that is, the ring of graded virtual $W_H$-characters. 
We also extend $\Hderiv$ to a map
$R(W_H)[[t]] \rightarrow \ZZ$ coefficientwise, defining
$\Hderiv \left( \sum_{k \geq 0} t^k \chi \right) := \sum_{k \geq 0} t^k \Hderiv(\chi)$.
The sum in the following corollary is over all reflecting hyperplanes $H$
of $W$.

\begin{corollary}
\label{general-exterior-tensor-degree-sum-corollary}
For any $W$-representation $U$,
$$
\sum_{m=0}^\ell \Delta( \wedge^m V^* \otimes U)\, t^m =
(1+t)^{\ell-1} \sum_H \Hderiv \left( (1+v^{e_H-1} t) \sum_{j=0}^{e_H-1} \mu_{H,j}\, v^j \right).
$$
\end{corollary}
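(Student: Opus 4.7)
The plan is to apply Opdam's Lemma termwise to each $\wedge^m V^*\otimes U$, package the result into a generating function in $t$, and then simplify using the multiplicative behavior of exterior algebras on a direct sum. The key observation is that, restricted to $W_H$, the space $V^*$ splits off a very simple character and the rest is trivial, which makes the $t$-generating function essentially a product.

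First, by Lemma~\ref{Opdam-lemma} applied to the representation $\wedge^m V^*\otimes U$,
$$
\sum_{m=0}^\ell \Delta(\wedge^m V^*\otimes U)\, t^m
=\sum_H\ \Hderiv\!\left(\sum_{m=0}^\ell t^m\bigl(\wedge^m V^*\otimes U\bigr)\!\downarrow_{W_H}^{W}\right),
$$
where the interchange of $\Hderiv$ with the (finite) sum over $m$ is just $\ZZ$-linearity of $\Hderiv$ extended coefficientwise to $R(W_H)[t]$. So the proof reduces to computing, for each reflecting hyperplane $H$, the graded class of $\bigoplus_m t^m\,\wedge^m V^*\otimes U$ in $R(W_H)[t]$.

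Next I would decompose $V^*$ as a $W_H$-module. Since $W_H$ fixes $H$ pointwise and acts on a complementary line through the character $\det^{-1}:=({\det}\!\downarrow_{W_H}^{W})^{-1}$ (reading traces on $V^*$ as conjugates of traces on $V$), one gets
$$
V^*\!\downarrow_{W_H}^{W}\ =\ (\ell-1)\cdot \mathbf{1}\ +\ {\det}^{-1}\ =\ (\ell-1)\cdot v^0\ +\ v^{e_H-1}
$$
in $R(W_H)=\ZZ[v]/(v^{e_H}-1)$. Because the exterior algebra is multiplicative on direct sums, the graded class of $\wedge V^*\!\downarrow_{W_H}^{W}$ in $R(W_H)[t]$ is
$$
\sum_{m=0}^\ell t^m\,\wedge^m V^*\!\downarrow_{W_H}^{W}
\ =\ (1+t)^{\ell-1}\,\bigl(1+t\,v^{e_H-1}\bigr).
$$
Tensoring with $U\!\downarrow_{W_H}^{W}=\sum_{j=0}^{e_H-1}\mu_{H,j}\,v^j$ gives
$$
\sum_{m=0}^\ell t^m\,(\wedge^m V^*\otimes U)\!\downarrow_{W_H}^{W}
\ =\ (1+t)^{\ell-1}\,\bigl(1+t\,v^{e_H-1}\bigr)\sum_{j=0}^{e_H-1}\mu_{H,j}\,v^j.
$$

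Finally, since $(1+t)^{\ell-1}$ is a scalar in $\ZZ[t]$ (it carries no $v$), it commutes with $\Hderiv$ and can be pulled out of both $\Hderiv$ and the sum over $H$, yielding
$$
\sum_{m=0}^\ell \Delta(\wedge^m V^*\otimes U)\,t^m
=(1+t)^{\ell-1}\sum_H \Hderiv\!\left(\bigl(1+v^{e_H-1}t\bigr)\sum_{j=0}^{e_H-1}\mu_{H,j}\,v^j\right),
$$
which is the asserted identity.

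The only real thing to get right is bookkeeping at a single hyperplane: namely, that $V^*\!\downarrow_{W_H}^{W}$ contributes the character $v^{e_H-1}$ (and not $v$) on its nontrivial line, so that the exterior-algebra factor is $1+t v^{e_H-1}$ as required. Everything else is formal from Opdam's Lemma and the multiplicativity $\wedge(A\oplus B)=\wedge A\otimes \wedge B$.
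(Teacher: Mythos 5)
Your proposal is correct and follows essentially the same route as the paper: restrict $V^*$ to $W_H$ as $(\ell-1)$ trivial summands plus the character $v^{e_H-1}$, use multiplicativity of the exterior algebra to get the factor $(1+t)^{\ell-1}(1+v^{e_H-1}t)$, tensor with $U\downarrow_{W_H}^{W}=\sum_j \mu_{H,j}v^j$, and apply Lemma~\ref{Opdam-lemma} coefficientwise in $t$. The only difference is expository: you spell out the $t$-linearity bookkeeping and the identification $\det^{-1}=\det^{e_H-1}=v^{e_H-1}$ that the paper states without comment.
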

\begin{proof}
Recall that each $U\downarrow_{W_H}^{W} = \sum_{j=0}^{e_H-1} \mu_{H,j}\, v^j$ in $R(W_H)$.
The restriction $V^*\downarrow^W_{W_H}$  is a sum of $\ell-1$ copies
of the trivial representation and one copy of $\det^{e_H-1}$.  
Hence 
the restriction 
$\wedge V^* \downarrow^W_{W_H}$ will be represented by
$(1+t)^{\ell-1} (1+v^{e_H-1}t)$, and 
$(\wedge V^* \otimes U) \downarrow^W_{W_H}$ will be represented by
$
(1+t)^{\ell-1} (1+v^{e_H-1}t) 
\sum_{j=0}^{e_H-1} \mu_{H,j}\, v^j
$
in $R(W_H)[[t]]$.
The result then follows from Lemma~\ref{Opdam-lemma}.
\end{proof}

\subsection*{Solomon's theorem}
We illustrate in the next example how
Corollary~\ref{general-exterior-tensor-degree-sum-corollary} 
gives
Solomon's result~\cite{Solomon}
that the space of $W$-invariant differential forms 
is generated by $df_1,\ldots, df_\ell$
as an exterior algebra\footnote{For an alternate geometric proof sketch of this result, see Berest, Etingof, and Ginzburg~\cite[Remark~1.17]{BEG}.}.
\begin{example}
We show that $(S \otimes \wedge^m V^*)^W$ has $S^W$-basis 
$\{ df_I \}_{I \in \binom{[\ell]}{m}}$
where $\binom{[\ell]}{m}$ denotes the collection of all $m$-element subsets
$I=\{i_1,\ldots,i_\ell\}$ of the set $[\ell]:=\{1,2,\ldots,\ell\}$,
with $1\leq i_1<\ldots<i_m\leq \ell$, and where
$$
df_I:=
df_{i_1} \wedge\cdots \wedge df_{i_m}.
$$
Apply Corollary~\ref{general-exterior-tensor-degree-sum-corollary}  
to the trivial representation $U$,
obtaining
$$
\begin{aligned}
\sum_{m=0}^\ell \Delta( \wedge^m V^*) t^m 
=(1+t)^{\ell-1} \sum_H \Hderiv (1+v^{e_H-1} t) 
=(1+t)^{\ell-1} \sum_H (e_H-1)\, t 
=Nt(1+t)^{\ell-1} 
\end{aligned}
$$
where the last equality uses Equation~\eqref{Shephard-Todd-sum-of-exponents-fact}.  
Therefore
$
 \Delta( \wedge^m V^*) = 
  \binom{\ell-1}{m-1} N.
$
Note that the sum of degrees of the elements in the alleged basis  $\{ df_I \}_{I \in \binom{[\ell]}{m}}$
matches this:
 $$
 \sum_{I \in \binom{[\ell]}{m}} \sum_{i \in I} e_i
 =\sum_{i=1}^\ell e_i\, \#\left\{I \in \sbinom{[\ell]}{m}: i \in I\right\}
 =N \sbinom{\ell-1}{m-1}
 =\Delta(\wedge^m V^*).
$$
Hence by 
Lemma~\ref{commutative-algebra-lemma}, 
it suffices only to check the linear independence
of $df_I$ over 
$K=\Frac(S)$.  As observed by Solomon, the $m$-fold wedge products 
of
the elements $df_1,\ldots,df_\ell$ 
form
a $K$-basis for  $K \ot \wedge^m V^*$ if and only their top wedge is nonvanishing:
$$
df_1\wedge \cdots\wedge  df_\ell 
= \det\left(\frac{\partial f_i}{\partial x_j}\right) \ot x_1\wedge \cdots\wedge x_\ell \neq 0.
$$
But this follows immediately
from the Jacobi Criterion~\cite{Jacobi}:
The algebraic independence of $f_1,\ldots,f_\ell$ 
implies
that the matrix of coefficients
$(\frac{\partial f_i}{\partial x_j})$
of $df_1, \ldots, df_\ell$
is nonsingular.
\end{example}

\subsection*{Numerology of differential derivations}
Consider the $W$-representation 
$U:=\wedge^m V^* \ot V$
of dimension 
$$
p:= \ell \sbinom{\ell}{m}
=\dim_\CC (\wedge^m V^* \ot V) \, .$$
We show that $\Delta(\wedge^m V^*\ot V)$
depends only on $N$, $N^*$, and $\ell:=\dim_\CC V$.
\begin{prop}
\label{Molien-derivative-prop}
For any complex reflection group $W$
and $1\leq m\leq \ell$,
$$
\Saitodegree(\wedge^m V^*\ot V)=
(\ell-1)\sbinom{\ell-1}{m-1}N + \sbinom{\ell-1}{m}N^*\, .
$$
\end{prop}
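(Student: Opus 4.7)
The plan is to apply Corollary~\ref{general-exterior-tensor-degree-sum-corollary} directly with $U=V$ the reflection representation, then extract the coefficient of $t^m$; the only substantive step is identifying the local data for $V$ at each reflecting hyperplane.

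For this, I would note that a generator of $W_H$ acts as the identity on $H$ and by a primitive $e_H$-th root of unity on a complementary line, so its eigenvalues on $V$ are $1,\ldots,1,\zeta$. Hence $V\downarrow_{W_H}^W = (\ell-1)\cdot\mathbf{1} + \det$, which in $R(W_H)$ is $(\ell-1)+v$; equivalently, $\mu_{H,0}=\ell-1$, $\mu_{H,1}=1$, and all other $\mu_{H,j}$ vanish. (This is dual to the decomposition of $V^*\downarrow_{W_H}^W$ already used inside the proof of Corollary~\ref{general-exterior-tensor-degree-sum-corollary}.)

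Next I would expand, using $v\cdot v^{e_H-1}=v^{e_H}=1$ in $R(W_H)$,
$$(1+v^{e_H-1}t)\bigl((\ell-1)+v\bigr) \;=\; \bigl((\ell-1)+v\bigr) \,+\, \bigl(1+(\ell-1)v^{e_H-1}\bigr)\,t,$$
then apply $\Hderiv$ coefficientwise in $t$ (recalling $\Hderiv(v^j)=j$) to obtain $1 + (\ell-1)(e_H-1)\,t$. Summing over all reflecting hyperplanes with $\sum_H 1 = N^*$ and $\sum_H(e_H-1) = N$ (since the disjoint sets $W_H\setminus\{1\}$ enumerate the reflections of $W$, together with Equation~\eqref{Shephard-Todd-sum-of-exponents-fact}), Corollary~\ref{general-exterior-tensor-degree-sum-corollary} yields
$$\sum_{m=0}^\ell \Delta(\wedge^m V^*\otimes V)\,t^m \;=\; (1+t)^{\ell-1}\bigl(N^* + (\ell-1)N\,t\bigr).$$
Reading off $[t^m]$ gives precisely $(\ell-1)\binom{\ell-1}{m-1}N + \binom{\ell-1}{m}N^*$.

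I do not anticipate any real obstacle: Corollary~\ref{general-exterior-tensor-degree-sum-corollary} is responsible for all the hard work, and the remainder is binomial bookkeeping. The only places to be careful are (i) reducing $v^{e_H}$ to $1$ correctly when multiplying out in $R(W_H)$, and (ii) the standard identification of the nonidentity elements of $W_H$ with the reflections fixing $H$, so that the two sums over $H$ reduce cleanly to $N^*$ and $N$, respectively. As a sanity check, the constant term $[t^0]$ yields $\Delta(V)=N^*$, matching Equation~\eqref{sum-of-coexponents-is-N*}.
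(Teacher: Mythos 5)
Your proposal is correct and follows essentially the same route as the paper: apply Corollary~\ref{general-exterior-tensor-degree-sum-corollary} with $U=V$, use the local data $V\downarrow_{W_H}^W=(\ell-1)+v$, reduce $v^{e_H}$ to $1$, and sum over hyperplanes via $\sum_H 1=N^*$ and $\sum_H(e_H-1)=N$ before invoking the binomial theorem. The computation and the identification of the local data match the paper's proof step for step.
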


\begin{proof}
We take $U=V$ in 
Corollary~\ref{general-exterior-tensor-degree-sum-corollary}:
$$
\begin{aligned}
\sum_{m=0}^\ell \Delta( \wedge^m V^* \otimes V) t^m 
&=(1+t)^{\ell-1} \sum_H \Hderiv \left( (1+v^{e_H-1} t) ( \ell-1 + v ) \right) \\
&=(1+t)^{\ell-1} \sum_H \Hderiv \left( \ell-1 + (\ell-1)tv^{e_H-1}+ v + t \right) \\
&=(1+t)^{\ell-1} \sum_H \big( (\ell-1)t(e_H-1) + 1 \big)
= (1+t)^{\ell-1}\big( (\ell-1)Nt+N^* \big),
\end{aligned}
$$
using Equations~\eqref{Shephard-Todd-sum-of-exponents-fact} and~
\eqref{sum-of-coexponents-is-N*}.  
The proposition now follows  from the binomial theorem.
\end{proof}

Proposition~\ref{Molien-derivative-prop} and
Lemma~\ref{commutative-algebra-lemma} imply a corollary used repeatedly in the proofs of Theorems~\ref{main-result} and \ref{generation-result}.

\begin{corollary}
\label{key-numerology-corollary}
Any set of homogeneous elements $\{\psi_i\}_{i=1,2,\ldots,\ell\binom{\ell}{m}}$
in
$(S \ot \wedge^m V^* \ot V)^W$ with
$$\sum_i \deg(\psi_i)=
\Saitodegree(\wedge^m V^*\ot V)=
(\ell-1)\sbinom{\ell-1}{m-1}N + \sbinom{\ell-1}{m}N^*
$$
forms an $S^W$-basis for $(S \ot \wedge^m V^* \ot V)^W$ if and only
if they are linearly independent over $K=\Frac(S)$.
\end{corollary}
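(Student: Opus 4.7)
The plan is to deduce Corollary~\ref{key-numerology-corollary} directly from Lemma~\ref{commutative-algebra-lemma} and Proposition~\ref{Molien-derivative-prop}. Setting $A = S^W$ and $M = (S \otimes \wedge^m V^* \otimes V)^W$, the Hochster-Eagon result recalled in the Introduction makes $M$ a free graded $S^W$-module of rank $p = \dim_\CC(\wedge^m V^* \otimes V) = \ell\binom{\ell}{m}$, while Proposition~\ref{Molien-derivative-prop} identifies
$$
\Delta_{S^W}(M) \;=\; (\ell-1)\binom{\ell-1}{m-1}N + \binom{\ell-1}{m}N^*.
$$
Lemma~\ref{commutative-algebra-lemma} then asserts that a homogeneous set $\{\psi_i\}_{i=1}^{p}$ in $M$ with this correct degree sum forms an $S^W$-basis of $M$ if and only if it is linearly independent over $\Frac(S^W)$.

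The one point requiring thought is that the corollary phrases its independence condition over the larger field $K = \Frac(S)$ rather than over $\Frac(S^W)$. One direction is immediate: $K$-linear independence implies $\Frac(S^W)$-linear independence, since $\Frac(S^W) \subseteq K$. For the converse, I would use that $\Frac(S)/\Frac(S^W)$ is a finite Galois extension with group $W$. Standard Galois descent applied to the $W$-module $U := \wedge^m V^* \otimes V$ yields the identification
$$
(K \otimes_\CC U)^W \;=\; \Frac(S^W) \otimes_{S^W} M,
$$
with both sides of $\Frac(S^W)$-dimension $p$ (the left by descent, the right because $M$ is free of rank $p$). Extending scalars, the natural $K$-linear map $K \otimes_{S^W} M \longrightarrow K \otimes_\CC U$ becomes an isomorphism of $p$-dimensional $K$-vector spaces. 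Consequently any $\Frac(S^W)$-basis of $\Frac(S^W) \otimes_{S^W} M$ maps to a $K$-basis of $K \otimes_\CC U$, so any $\Frac(S^W)$-linearly independent set of $p$ elements of $M$ remains $K$-linearly independent when viewed inside $K \otimes_\CC U$.

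I anticipate no genuine obstacle: the corollary is a formal consequence of the two preceding results, with the only mild subtlety being the swap from $\Frac(S^W)$ to $K = \Frac(S)$ as the field of independence, handled by the Galois-theoretic dimension count above.
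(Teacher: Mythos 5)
Your proposal is correct and matches the paper's route exactly: the paper offers no separate proof, simply declaring the corollary an immediate consequence of Lemma~\ref{commutative-algebra-lemma} (with $A=S^W$, $M=(S\otimes\wedge^m V^*\otimes V)^W$ free of rank $\ell\binom{\ell}{m}$ by Hochster--Eagon) and Proposition~\ref{Molien-derivative-prop}. Your extra paragraph reconciling independence over $\Frac(S^W)$ with independence over $K=\Frac(S)$ via Galois descent is a correct treatment of a point the paper leaves implicit, and is a worthwhile addition rather than a deviation.
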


\section{Digression: A Saito Criterion}
\label{Saito-criterion-section}

This section, although not needed for the sequel, extends Corollary~\ref{key-numerology-corollary} to a condition similar to {\it Saito's criterion} for 
free hyperplane arrangements~\cite[\S4.2]{OrlikTerao}
for any complex reflection group $W$.
We first
recall some facts about the coefficient matrices
for the differential forms $df_i$ and
basic derivations $\theta_i$.

\subsection*{Jacobian matrix and product of hyperplanes}
\label{J-and-Q-subsection}

Recall 
the defining polynomial $Q$ and the Jacobian polynomial $J$
for a complex reflection group $W$:
$$
\begin{aligned}
Q&:=\prod_{H} l_H,& \quad\text{and}\quad
J&:=\prod_{H} l_H^{\, e_H-1}\, .
\end{aligned}
$$
Here, the product is taken over all reflecting hyperplanes $H=\ker l_H$ in $V$
for some choice of 
linear forms $l_H\in V^*$ with $e_H=|\text{Stab}_W(H)|$.
Note that $Q$ and $J$ are only well-defined
up to nonzero scalars.
Let $\Jac(f)$ and $M(\theta)$, respectively, be the matrices in 
$S^{\ell \times \ell}$ that express
$\{df_i\}_{i\in [\ell]}$ and $\{\theta_i\}_{i\in [\ell]}$
in the $\SSS$-bases $\{1\ot x_i\}_{i \in [\ell]}$ and $\{1\ot y_i\}_{i \in [\ell]}$
for $\SSS \otimes V^*$ and $\SSS \otimes V$, respectively.
Steinberg~\cite{Steinberg} and 
Orlik and Solomon~\cite[\S2]{OrlikSolomon}, respectively, showed that
$$
J=\det(\Jac(f))\quad\quad\text{and}\quad\quad
Q=\det(M(\theta))\, ,
$$
so that
\begin{equation}
\begin{aligned}
\label{N-and-N*-definition}
N 
  &= \sum_{i=1}^\ell {e_i}=\deg(J)
=\ \# \{\text{reflections in }W\}\, \quad\text{and}\\
N^* &=
    \sum_{i=1}^\ell {e^*_i}=\deg(Q)
=\  \#\{\text{reflecting hyperplanes for }W\}
\, .
\end{aligned}
\end{equation}
Note that Terao~\cite{Terao} also
showed that invariant derivations
$\theta_i$ also give an $\SSS$-basis for the {\it module of derivations}
of the reflection hyperplane arrangement of $W$; see 
also~\cite[\S4.1,~\S6.3]{OrlikTerao}.

When $W$ is a duality group, observe that (by definition)
\begin{equation}
\label{h-expressed-in-N-and-N*}
h:=\deg(f_\ell)=\frac{1}{\ell}\sum_{i=1}^\ell (e_i+e^*_i) = \frac{N+N^*}{\ell}
\, .
\end{equation}

\subsection*{Matrix of Coefficients}
We capture the coefficients
of any set of invariant differential derivations
in a matrix of coefficients.
Consider the
obvious free $\SSS$-basis for
$\SSS\ot \wedge^m V^* \ot V$
given by
\begin{equation}
\label{obvious-basis}
\{dx_{I} \ot y_j: 1\leq i,j\leq \ell\}
\qquad\qquad\text{with}\qquad
dx_{I}:=1\ot x_{i_1}\wedge \cdots\wedge x_{i_m}
\end{equation}
for $m$-subsets $I=\{i_1 < \cdots < i_m\}$ of $[\ell]$
and $[\ell]:=\{1,\ldots, \ell\}$.
Given a collection $\BBB\subset
(S \ot \wedge^m V^* \ot V)^W$,
let  $\Coef(\BBB)$ denote its coefficient matrix in $\SSS^{p \times p}$
with respect to the $\SSS$-basis in \eqref{obvious-basis}.

\begin{lemma}
\label{mandatory-divisibility-lemma}
For any $\BBB=\{\psi_i\}_{i=1}^p \subset 
(\SSS\ot \wedge^m V^* \ot V)^W$, 
the product
$J^{(\ell-1)\binom{\ell-1}{m-1}} Q^{\binom{\ell-1}{m}}$ divides
$\det\Coef(\BBB)$.
\end{lemma}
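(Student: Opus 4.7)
The plan is to establish the divisibility prime-by-prime. Since $\SSS$ is a UFD and distinct reflecting hyperplanes yield pairwise coprime (non-proportional) linear forms $l_H$, it suffices to show that for each reflecting hyperplane $H$ the polynomial $l_H^{(e_H-1)(\ell-1)\binom{\ell-1}{m-1}+\binom{\ell-1}{m}}$ divides $\det\Coef(\BBB)$. Taking the product over all $H$ then yields exactly $J^{(\ell-1)\binom{\ell-1}{m-1}}Q^{\binom{\ell-1}{m}}$, via the formulas $J=\prod_H l_H^{e_H-1}$ and $Q=\prod_H l_H$ from Section~\ref{J-and-Q-subsection}.

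To establish the local divisibility at a fixed $H$, I would pick a generator $s$ of the cyclic pointwise stabilizer $W_H$ (of order $e_H$) and change to coordinates $x_1,\ldots,x_\ell$ adapted to $H$, so that $l_H=x_1$ and $s$ acts diagonally: $s\cdot y_1=\zeta y_1$ with $y_2,\ldots,y_\ell$ fixed, where $\zeta:=\det(s|_V)$, and dually $s\cdot x_1=\zeta^{-1}x_1$ with $x_2,\ldots,x_\ell$ fixed. A quick character check then shows that the basis element $dx_I\ot y_j$ from~\eqref{obvious-basis} carries $W_H$-character $\det^{\one_{j=1}-\one_{1\in I}}$. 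Since each $\psi_i\in\BBB$ is in particular $W_H$-invariant, the coefficient of $dx_I\ot y_j$ in $\psi_i$ must therefore be a $\det^{\one_{1\in I}-\one_{j=1}}$-semi-invariant for $W_H$. Combined with the description $\SSS^{W_H}=\CC[x_1^{e_H},x_2,\ldots,x_\ell]$, this forces such a semi-invariant to be divisible by $x_1^{a(I,j)}$, where $a(I,j)\in\{0,1,\ldots,e_H-1\}$ is the residue of $\one_{j=1}-\one_{1\in I}$ modulo $e_H$.

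Consequently, the row of $\Coef(\BBB)$ indexed by $dx_I\ot y_j$ may be written as $x_1^{a(I,j)}$ times a row in $\SSS$, so $\det\Coef(\BBB)$ is divisible by $x_1^{\sum_{(I,j)} a(I,j)}$. A routine split into the four cases ($1\in I$ vs.\ $1\notin I$, against $j=1$ vs.\ $j\neq 1$) gives $a(I,j)=0,\, e_H-1,\, 1,\, 0$ respectively, and counting the pairs $(I,j)$ with $|I|=m$ and $j\in[\ell]$ in each case produces
\begin{equation*}
\sum_{(I,j)} a(I,j) \ =\ (e_H-1)(\ell-1)\sbinom{\ell-1}{m-1} \ +\ \sbinom{\ell-1}{m},
\end{equation*}
which is exactly the $l_H$-adic valuation of $J^{(\ell-1)\binom{\ell-1}{m-1}}Q^{\binom{\ell-1}{m}}$. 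I expect the only mildly delicate part of the argument to be the bookkeeping of the $W_H$-characters of $dx_I\ot y_j$ with the correct sign conventions for $\det|_V$ versus $\det|_{V^*}$, and for the identification of $l_H$ with $x_1$; once these are fixed, the combinatorial count matches the target exponent on the nose, and UFD coprimality of the $l_H$ assembles the local divisibilities into the stated global one.
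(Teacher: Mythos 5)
Your proposal is correct and follows essentially the same route as the paper's proof of Lemma~\ref{mandatory-divisibility-lemma}: fix a reflecting hyperplane $H$, diagonalize a generator of $W_H$ in coordinates with $l_H=x_1$, use the $W_H$-character of $dx_I\ot y_j$ to force the coefficient $f_{I,j}$ to be divisible by the appropriate power of $x_1$, count over the pairs $(I,j)$, and assemble over all $H$ by unique factorization. Your uniform bookkeeping via the residue of $\one_{j=1}-\one_{1\in I}$ modulo $e_H$ is a clean packaging of the paper's two case observations (and correctly treats the case $j=1$, $1\notin I$, which the paper's text misstates as $1\in I$).
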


\begin{proof}
Fix a reflecting hyperplane $H$ in $V$ for $W$ and 
a reflection $s$ in $W$ of maximal order $e_H$ fixing $H$.
Choose coordinates
$x_1, \ldots, x_\ell$ of $V^*$ so that
$l_H=x_1$ and $s$
acts diagonally
with nonidentity eigenvalue $\xi$: 
$$
s(y_i)=\begin{cases} 
\xi^{-1} y_1 &\text{ if }i=1, \\
y_i &\text{ if }i \neq 1,
\end{cases}
\qquad \text{ and } \qquad
s(x_i)=\begin{cases} 
\xi x_1 &\text{ if }i=1, \\
x_i &\text{ if }i \neq 1\, .
\end{cases}
$$
Each row of $\Coef(\BBB)$ 
lists the coefficients $f_{I,j}$ of some invariant 
differential derivation
$\psi_i=\sum
f_{I,j}\, dx_I\otimes y_j$
in $(\SSS\ot \wedge^m V^* \ot V)$,
while each column of
$\Coef(\BBB)$ is indexed by a pair $(I,j)$
for $I$ an $m$-subset of $[\ell]$
and $j\in[\ell]$.
Note two observations:
\begin{itemize}
\item
For each pair $(I,j)$ with $j \neq 1$ but $1\in I$,
the polynomial $x_1^{e_H-1}$ divides $f_{I,j}$ since
$$
s(dx_I \otimes y_j) = \xi(dx_I\otimes y_j)
\quad\text{implies that}\quad
s(f_{I,j})=\xi^{-1} f_{I,j}\, ;
$$
thus $(\ell-1)\binom{\ell-1}{m-1}$ different columns 
of the matrix
$\Coef(\BBB)$ are divisible by $x_1^{e_H-1}$.
\item
For each pair $(I,j)$ with $j = 1$ but $1\in I$,
the polynomial $x_1$ divides
$f_{I,j}$,
since
$$
s(dx_I\otimes y_j) = \xi^{-1}(dx_I\otimes y_j)
\quad\text{implies that}\quad
s(f_{I,j})=\xi f_{I,j}\, ; 
$$
thus $\binom{\ell-1}{m}$ different columns of the matrix
$\Coef(\BBB)$ are divisible by $x_1$.
\end{itemize}
Hence $\ell_H=x_1$ when raised to the power
${(e_H-1)(\ell-1)\binom{\ell-1}{m-1}+\binom{\ell-1}{m}}$ divides
$\det\Coef(\BBB)$.  
This holds for each reflecting hyperplane $H$,
and therefore unique factorization implies that 
$J^{(\ell-1)\binom{\ell-1}{m-1}} Q^{\binom{\ell-1}{m}}$ divides $\det\Coef(\BBB)$.
\end{proof}

\subsection*{Saito-like Criterion}
Again, let $K=\CC(x_1,\ldots,x_\ell)$
be the fraction field of $S=\CC[x_1,\ldots,x_\ell]$.

\begin{corollary}
\label{Saito-criterion}
For a homogeneous subset $\BBB=\{\psi_i\}_{i=1}^p$ of
$(\SSS\ot \wedge^m V^* \ot V)^W$, 
the following are equivalent:
\begin{enumerate}
\item[(a)]
$\BBB$ forms an $\SSS^W$-basis for 
$(\SSS\ot \wedge^m V^* \ot V)^W$.
\rule{0ex}{3ex}
\item[(b)] $\det\Coef(\BBB)$ is nonzero of degree
$(\ell-1)\binom{\ell-1}{m-1}N+\binom{\ell-1}{m}N^*$.
\rule{0ex}{3ex}
\item[(c)] \rule{0ex}{3ex}
$\det\Coef(\BBB)=
c\cdot J^{(\ell-1)\binom{\ell-1}{m-1}} Q^{\binom{\ell-1}{m}}
$
for some nonzero scalar $c$ in $\CC$.
\rule{0ex}{3ex}
\rule{0ex}{3ex}
\item[(d)] \rule{0ex}{3ex}
$
\sum_{i=1}^p \deg(\psi_i) = 
(\ell-1)\binom{\ell-1}{m-1}N+\binom{\ell-1}{m}N^*$
and $\BBB$ is $K$-linearly independent in the space $K \ot \wedge^m V^* \ot V$.
\rule{0ex}{3ex}
\end{enumerate}
\end{corollary}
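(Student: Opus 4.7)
The proof will be a short cycle of implications, all of which draw on machinery already established in the excerpt. The central observation is that $\det\Coef(\BBB)$ is homogeneous of degree $\sum_{i=1}^p \deg(\psi_i)$ whenever it is nonzero: each row of $\Coef(\BBB)$ holds the coefficients of a homogeneous $\psi_i$ of degree $\deg(\psi_i)$, so every term in the determinantal expansion is a product of $p$ entries, one from each row, of total degree $\sum_i \deg(\psi_i)$. Moreover, nonvanishing of $\det\Coef(\BBB)$ in $\SSS$ is equivalent to $K$-linear independence of $\BBB$ in $K \otimes \wedge^m V^* \otimes V$, since $\Coef(\BBB)$ is literally the matrix of $\BBB$ with respect to the $\SSS$-basis displayed in \eqref{obvious-basis}.

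First I would verify (a) $\Leftrightarrow$ (d) directly: Proposition~\ref{Molien-derivative-prop} identifies the quantity $(\ell-1)\binom{\ell-1}{m-1}N + \binom{\ell-1}{m}N^*$ as $\Saitodegree(\wedge^m V^* \otimes V)$, and Corollary~\ref{key-numerology-corollary} then asserts that (d) is equivalent to being an $\SSS^W$-basis.

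Next I would show (c) $\Rightarrow$ (b) $\Rightarrow$ (c). The forward direction is immediate from $\deg(J) = N$ and $\deg(Q) = N^*$ (Equation \eqref{N-and-N*-definition}), since then
$$
\deg\bigl( c \cdot J^{(\ell-1)\binom{\ell-1}{m-1}} Q^{\binom{\ell-1}{m}}\bigr)
= (\ell-1)\sbinom{\ell-1}{m-1}N + \sbinom{\ell-1}{m}N^*.
$$
For (b) $\Rightarrow$ (c), invoke Lemma~\ref{mandatory-divisibility-lemma}: the product $J^{(\ell-1)\binom{\ell-1}{m-1}} Q^{\binom{\ell-1}{m}}$ divides $\det\Coef(\BBB)$, and under assumption (b) the quotient is a nonzero homogeneous polynomial of degree zero, hence a nonzero scalar.

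Finally I would connect the cycle by showing (b) $\Leftrightarrow$ (d). The nonvanishing of $\det\Coef(\BBB)$ is precisely $K$-linear independence of $\BBB$ as noted above, and under nonvanishing, homogeneity forces $\deg \det\Coef(\BBB) = \sum_i \deg(\psi_i)$. So the degree condition in (b) transfers verbatim to the degree-sum condition in (d), and vice versa. I do not expect any serious obstacle: every implication is a one- or two-line consequence of a prior result, and the only subtlety is the bookkeeping argument that the determinant of a matrix of homogeneous entries (with the row-homogeneity pattern forced by homogeneity of the $\psi_i$) has degree exactly $\sum_i \deg(\psi_i)$ when nonzero.
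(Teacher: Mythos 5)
Your proposal is correct and follows essentially the same route as the paper, which cites Corollary~\ref{key-numerology-corollary} for (a)$\Leftrightarrow$(d), ``linear algebra'' for (d)$\Leftrightarrow$(b), and Lemma~\ref{mandatory-divisibility-lemma} for (b)$\Leftrightarrow$(c). Your homogeneity bookkeeping for the determinant is exactly the linear-algebra step the paper leaves implicit.
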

\begin{proof}
Corollary~\ref{key-numerology-corollary} gives the equivalence
of (a) and (d), linear algebra gives the equivalence of (d) and (b), and Lemma~\ref{mandatory-divisibility-lemma}
gives the equivalence of (b) and (c).
\end{proof}

\begin{remark}
An argument with Cramer's rule shows that 
Corollary~\ref{Saito-criterion}~(c) implies~(a)
directly
without using Corollary~\ref{key-numerology-corollary},
or appealing to any Hilbert series argument. 
Indeed,~(c) implies that
$\BBB$ {\it spans}
$(\SSS \otimes \wedge^m V^* \otimes V)^W$ over $\SSS^W$,
as we explain next.
Label the $\SSS$-basis elements 
$dx_I\otimes y_j$ of 
$\SSS \otimes \wedge^m V^* \otimes V$ as $z_1,\ldots,z_p$ 
for convenience.
Then the matrix  $\Coef(\BBB)$ in $\SSS^{p \times p}$
expresses the elements 
$\BBB=\{\psi_i\}_{i=1}^p$ in the $\SSS$-basis $\{z_i\}_{i=1}^p$:
\begin{equation}
\label{change-of-basis-matrix-equation}
\psi_j = \sum_{i=1}^p \Coef(\BBB)_{ij} \cdot z_i\, .
\end{equation}
To show that a typical element 
$\sum_{i=1}^p s_i z_i$ in $(\SSS \otimes \wedge^m V^* \otimes V)^W$
lies in the $\SSS^W$-span of $\BBB$,
find $k_i$ in the fraction field $K$ of $\SSS$ 
(using  $\det\Coef(\BBB) \neq 0$)
with
\begin{equation}
\label{fraction-field-expression}
\sum_{i=1}^p s_i z_i = \sum_{j=1}^p k_j \psi_j\, .
\end{equation}
We may assume each $k_j$ lies in $K^W$, else apply 
the symmetrizer $\frac{1}{|W|} \sum_{g \in W} g(-)$ to
\eqref{fraction-field-expression}
and use the $W$-invariance of $\sum_i s_i z_i$ and of each $\psi_j$.
To show the $k_j$ actually lie in $\SSS^W$,  
substitute~\eqref{change-of-basis-matrix-equation}
into~\eqref{fraction-field-expression}, giving a matrix
equation relating the column vectors 
$\sss=\left[s_1,\ldots,s_p\right]^t$ and 
$\kk=\left[k_1,\ldots,k_p\right]^t$ in $K^p$:
$$
\sss=\Coef(\BBB) \cdot \kk.
$$
Cramer's rule then implies that
\begin{equation}
\label{Cramer-consequence}
k_i = \frac{\ \ \det \Coef(\BBB_{(i)})} 
                      {\det\Coef(\BBB)}
\end{equation}
where the numerator matrix $\Coef(\BBB_{(i)})$ 
is obtained from $\Coef(\BBB)$ 
by replacing its $i^{th}$ column with $\sss$.  
Then since $\Coef(\BBB_{(i)})$ expresses the
elements $\psi_1,\ldots,\psi_{i-1},\sum_i s_i z_i,\psi_{i+1},\ldots,\psi_p$ 
of
$(\SSS \otimes \wedge^m V^* \otimes V)^W$ in terms of the basis $\{z_i\}$,
Lemma~\ref{mandatory-divisibility-lemma} implies that 
its determinant is divisible by the nonzero polynomial
$\det\Coef(\BBB)$.  
Thus the right side of \eqref{Cramer-consequence} lies in $\SSS$, 
so that its left side $k_i$ lies in 
$K^W \cap \SSS = \SSS^W$, as desired.  
\end{remark}

\section{Independence over the fraction field}
\label{independence-section}

In this section, we use Springer's theory of regular elements
to investigate differential derivations
with coefficients in the fraction field
$K=\CC(x_1,\ldots,x_\ell)$
of $S=\CC[x_1,\ldots,x_\ell]$.
We will later show that 
Theorem~\ref{main-result}
(for duality groups) follows from
a more general statement
established in this section for arbitrary
reflection groups, Theorem~\ref{linear-independence-theorem},
describing a $K$-vector space basis 
for $(K \ot \wedge^m V^*  \ot V)^W$.

We first give a definition and a lemma.
Recall the notation $V^{\reg}$ 
for the complement within $V$ of the union of 
all reflecting hyperplanes for $W$, that is, the subset of vectors
in $V$ having regular $W$-orbit.
Recall the notation 
$
df_I:=df_{i_1}\wedge \cdots\wedge df_{i_m}\in \SSS\ot \wedge^m V^*
$
for subsets 
$I=\{i_1 < \ldots < i_m\} \subset [\ell]:=\{1,2,\ldots,\ell\}$,
and the notation  $\binom{[\ell]}{m}$ for the collection of all 
$m$-subsets $I \subset [\ell]$.


\begin{definition}
\label{Euler-map-definitions}
Define a $K$-linear map
$$
K \otimes V^*  \overset{E}{\longrightarrow} K
\quad\quad\text{by}\quad E(dx_i)=E(1\ot x_i)=x_i\, .$$ 
One also has a $K$-linear map
$
K \otimes V^* \otimes V  \xrightarrow{\ E \otimes \one_V }
K \otimes V\, .
$
\end{definition}

Note that by {\it Euler's identity}, for any homogeneous $f$ in $\SSS$,
\begin{equation}
\label{Euler-and-d-on-polynomial}
E(df) =\deg(f) \cdot f.
\end{equation}
Likewise, for $\theta=\sum_{j=1}^\ell \theta^{(j)}\otimes y_j$ 
in $\SSS \otimes V$ with each $\theta^{(j)}$ homogeneous 
of fixed degree $\deg(\theta)$,
\begin{equation}
\label{Euler-and-d-on-derivation}
(E\ot {\bf 1}_V )(d \theta) = \deg(\theta) \cdot \theta.
\end{equation}

We first give a 
$K$-basis for differential derivations with coefficients in $K$.
Recall from~\eqref{basic-derivations-definition}
that $\theta_1,\ldots,\theta_\ell$ are any choice of
basic derivations, i.e., any choice of homogeneous 
$S^W$-basis of $(S \otimes V)^W=(S \otimes \wedge^0 V^* \otimes V)^W$
(identifying $S\otimes V$ with $S\otimes 1 \otimes V$).

\begin{lemma}
\label{more-obvious-basis-lemma}
For each $0 \leq m \leq \ell$, 
the following set gives a $K$-basis for 
$K \ot \wedge^m V^*  \ot V$:
\begin{equation}
\label{more-obvious-basis}
\widetilde{\BBB}^{(m)} 
:= \big\{ df_{I} \, \theta_k \big\}_{ 
I \in \tbinom{[\ell]}{m}, \ k \in [\ell]}\ .
\end{equation}
Furthermore, elements of $\SSS \ot V^* \ot V$
can be expressed in the $K$-basis $\widetilde{\BBB}^{(1)}$ 
with coefficients in $(JQ)^{-\ell}\rule{0ex}{1.5ex}S$.
\end{lemma}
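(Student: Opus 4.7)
The plan is to reduce both statements to routine linear algebra over $K$, using the determinantal identities $J=\det\Jac(f)$ and $Q=\det M(\theta)$ recalled in Section~\ref{J-and-Q-subsection}.

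For the first statement, I would begin by noting that $\dim_K(K\otimes \wedge^m V^* \otimes V) = \binom{\ell}{m}\cdot\ell = |\widetilde{\BBB}^{(m)}|$, so it is enough to verify $K$-linear independence. Since $\det\Jac(f)=J$ is nonzero in $K$, the basic differentials $df_1,\ldots,df_\ell$ form a $K$-basis of $K\otimes V^*$, and hence $\{df_I\}_{I\in\binom{[\ell]}{m}}$ is a $K$-basis of $K\otimes \wedge^m V^*$. Similarly, $\det M(\theta)=Q\neq 0$ implies that $\{\theta_k\}_{k\in [\ell]}$ is a $K$-basis of $K\otimes V$. Identifying
$$
K \otimes \wedge^m V^* \otimes V \;\cong\; (K\otimes \wedge^m V^*) \otimes_K (K\otimes V),
$$
the products $df_I\,\theta_k$ (which match $df_I\otimes_K \theta_k$ under this isomorphism) assemble into a $K$-basis.

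For the second statement (the case $m=1$), I would write out the change-of-basis matrix from $\widetilde{\BBB}^{(1)}=\{df_i\,\theta_k\}$ to the standard $S$-basis $\{dx_i\otimes y_j\}$ of $S\otimes V^*\otimes V$. With $A:=\Jac(f)$ and $B:=M(\theta)$, this is the Kronecker product $A\otimes B$ in $S^{\ell^2\times\ell^2}$, which has determinant $(\det A)^\ell(\det B)^\ell=(JQ)^\ell$. The inverse $(A\otimes B)^{-1} = A^{-1}\otimes B^{-1}$ has entries of the form (a cofactor of $A$) times (a cofactor of $B$), divided by $JQ$, hence lies entrywise in $\tfrac{1}{JQ}\,S\subseteq (JQ)^{-\ell}S$. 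Consequently, every element of $S\otimes V^*\otimes V$ expands in $\widetilde{\BBB}^{(1)}$ with coefficients in $(JQ)^{-\ell}S$. The only mildly delicate step is carefully identifying the product $df_I\,\theta_k$ with $df_I\otimes_K\theta_k$ under the isomorphism displayed above; the remainder is standard linear algebra over $K$ once $J$ and $Q$ are known to be nonzero.
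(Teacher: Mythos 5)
Your proposal is correct and follows essentially the same route as the paper: the paper likewise identifies the change-of-basis matrix as the Kronecker product $\wedge^m(\Jac(f))\otimes M(\theta)$, deduces invertibility from $J,Q\neq 0$, and obtains the denominator bound from $\det(\Jac(f)\otimes M(\theta))=J^\ell Q^\ell$ in the $m=1$ case. Your observation that $(A\otimes B)^{-1}=A^{-1}\otimes B^{-1}$ actually yields the slightly sharper bound $(JQ)^{-1}S$, which of course implies the stated $(JQ)^{-\ell}S$.
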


\begin{proof}
The matrix that expresses 
$\widetilde{\BBB}^{(m)}$
in the usual $\SSS$-basis
$
\{ dx_{I}\ot y_j : I \in \tbinom{[\ell]}{m}, \ j \in [\ell] \}
$
of $\SSS \ot \wedge^m V^*  \ot V$
is the tensor product of the matrices $\wedge^m(\Jac(f)) \otimes M(\theta)$, where $\wedge^m(\Jac(f))$ is
the $m^{th}$ exterior power of $\Jac(f)$.  The invertibility of $\Jac(f)$ 
and functoriality of $\wedge^m(-)$ imply the invertibility of
$\wedge^m(\Jac(f))$. 
Then since $M(\theta)$ is also invertible, so is the tensor product
$\wedge^m(\Jac(f)) \otimes M(\theta)$,
and hence $\widetilde{\BBB}^{(m)}$ is another $K$-basis.
The last assertion of the proposition then follows, since
in the $m=1$ case, 
$$
\det( \Jac(f) \otimes M(\theta) )
= \det( \Jac(f) )^\ell \cdot \det( M(\theta) )^\ell
= J^\ell Q^\ell. \qedhere
$$
\end{proof}

We will show that
Theorem~\ref{main-result} follows from the next theorem.
\begin{theorem}
\label{linear-independence-theorem} 
Let $W$ be a complex reflection group
with homogeneous basic invariants $f_1,\ldots,f_\ell$,
and an index $i_0$ in $1,2, \ldots, \ell$ that satisfies
\begin{equation}
\label{regular-vector-assumption}
V^\reg \cap \bigcap_{i \neq i_0} f_i^{-1}\{0\} \neq \varnothing.
\end{equation}
Then for each $m=0,1,\ldots,\ell$, the following set gives a
$K$-vector space basis for $K \ot \wedge^m V^*  \ot V$:
\vspace{.5ex}
\begin{equation}
\label{more-general-putative-basis}
\BBB^{(m)} := 
 \Big\{ df_{I}\, d\theta_k\Big\}_{I \in \sbinom{[\ell] \setminus \{i_0\}}{m-1},
                              \, k \in [\ell]} 
\quad \sqcup \quad
 \Big\{ df_{I}\, \theta_k \Big\}_{I \in \sbinom{[\ell] \setminus \{i_0\}}{m},
                              \, k \in [\ell]}.
\end{equation}
\end{theorem}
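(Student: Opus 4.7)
The plan is to exploit the decomposition of $K \otimes V^*$ induced by the basis $\{df_1,\ldots,df_\ell\}$ (available since $\det\Jac(f)=J$ is nonzero in $K$), splitting off the direction $df_{i_0}$:
\begin{equation*}
K \otimes V^* \;=\; W_1'\oplus W_2',\qquad W_1':=\bigoplus_{i\neq i_0} K\,df_i,\qquad W_2':=K\,df_{i_0}.
\end{equation*}
This induces $\wedge^m(K\otimes V^*)=\wedge^m W_1'\,\oplus\,(\wedge^{m-1}W_1'\wedge W_2')$, and hence a direct-sum decomposition of $K \otimes \wedge^m V^*\otimes V$ into two pieces. A quick count, $|\BBB^{(m)}|=\binom{\ell-1}{m-1}\ell+\binom{\ell-1}{m}\ell=\binom{\ell}{m}\ell=\dim_K(K\otimes\wedge^m V^*\otimes V)$, shows that it suffices to prove $K$-linear independence.

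Writing each $d\theta_k=\sum_{i\neq i_0}df_i\otimes\tau_{k,i}\,+\,df_{i_0}\otimes\delta_k$ with $\tau_{k,i},\delta_k\in K\otimes V$, I would observe that the proposed basis is block upper triangular with respect to the above decomposition: for $I\in\binom{[\ell]\setminus\{i_0\}}{m}$, the element $df_I\,\theta_k$ lies entirely in $\wedge^m W_1'\otimes V$; for $I\in\binom{[\ell]\setminus\{i_0\}}{m-1}$, we get $df_I\,d\theta_k=df_I\wedge\gamma_k+df_I\wedge df_{i_0}\otimes\delta_k$, whose second summand lies in $\wedge^{m-1}W_1'\wedge W_2'\otimes V$. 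So it reduces to two claims: (i) $\{df_I\,\theta_k\}_{I\in\binom{[\ell]\setminus\{i_0\}}{m},\,k\in[\ell]}$ forms a $K$-basis of $\wedge^m W_1'\otimes V$, and (ii) $\{df_I\wedge df_{i_0}\otimes\delta_k\}_{I\in\binom{[\ell]\setminus\{i_0\}}{m-1},\,k\in[\ell]}$ forms a $K$-basis of $\wedge^{m-1}W_1'\wedge W_2'\otimes V$. Claim (i) is immediate since $\{df_i:i\neq i_0\}$ is a $K$-basis of $W_1'$ and $\{\theta_k\}$ is a $K$-basis of $K\otimes V$ (by Shephard--Todd--Chevalley). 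Claim (ii) reduces to showing $\{\delta_k\}_{k=1}^{\ell}$ is a $K$-basis of $K\otimes V$.

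This final reduction is where the hypothesis~\eqref{regular-vector-assumption} enters decisively; it is also the main technical obstacle, since $\delta_k$ is the somewhat opaque coefficient of $df_{i_0}$ when $d\theta_k$ is expanded in the $df_i$-basis. I would extract it by applying $(E\otimes\mathbf{1}_V)$ to the above expansion of $d\theta_k$ and using the two Euler identities $E(df_i)=\deg(f_i)\,f_i$ from~\eqref{Euler-and-d-on-polynomial} and $(E\otimes\mathbf{1}_V)(d\theta_k)=e_k^*\,\theta_k$ from~\eqref{Euler-and-d-on-derivation}, obtaining
\begin{equation*}
e_k^*\,\theta_k \;=\; \sum_{i\neq i_0}\deg(f_i)\,f_i\,\tau_{k,i}\,+\,\deg(f_{i_0})\,f_{i_0}\,\delta_k.
\end{equation*}
Now evaluate at a vector $v\in V^{\reg}$ with $f_i(v)=0$ for all $i\neq i_0$ (existing by hypothesis). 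Every term in the sum over $i\neq i_0$ vanishes; moreover $f_{i_0}(v)\neq 0$, because otherwise all basic invariants would vanish at $v$, forcing $v=0\notin V^{\reg}$. We conclude
\begin{equation*}
\delta_k(v)\;=\;\frac{e_k^*}{\deg(f_{i_0})\,f_{i_0}(v)}\,\theta_k(v).
\end{equation*}

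Finally, $v\in V^{\reg}$ gives $Q(v)\neq 0$, so $\{\theta_k(v)\}_{k=1}^{\ell}$ is a $\CC$-basis of $V$, and each $e_k^*>0$ (the existence of a regular $v$ precludes nonzero $W$-fixed vectors in $V$, hence degree-zero basic derivations). Therefore $\{\delta_k(v)\}_{k=1}^{\ell}$ is a $\CC$-basis of $V$, which implies $\{\delta_k\}_{k=1}^{\ell}$ is $K$-linearly independent and thus a $K$-basis of $K\otimes V$. This completes claim (ii), and with it the proof.
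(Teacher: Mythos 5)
Your argument is correct in substance, and its engine is exactly the paper's: expand $d\theta_k$ over the $K$-basis coming from the $df_i$'s, apply $E\otimes \one_V$ together with the Euler identities \eqref{Euler-and-d-on-polynomial}--\eqref{Euler-and-d-on-derivation}, and evaluate at a vector $v\in V^\reg$ killing all $f_i$ with $i\neq i_0$. In the paper's notation your $\delta_k$ is $\sum_j r_{i_0,j,k}\,\theta_j$, so your claim (ii) is precisely the nonsingularity of the matrix $(r_{i_0,j,k})_{j,k}$ that the paper establishes in its $m=1$ step. Where you genuinely diverge is in how the general-$m$ statement is reduced to this core computation: the paper argues in three stages ($m=1$, then $m=\ell$, then intermediate $m$ by wedging a putative dependence with $df_{I_0^c}$ to annihilate all but one block of terms), whereas you split $K\otimes V^*=W_1'\oplus K\,df_{i_0}$ once and for all and read off a block-triangular structure on $\wedge^m W_1'\otimes V\,\oplus\,(\wedge^{m-1}W_1'\wedge K\,df_{i_0})\otimes V$ that handles every $m$ uniformly. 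The two reductions are morally the same (multiplying by $df_{I_0^c}$ is how the paper implements your projection), but yours is cleaner and makes the role of hypothesis \eqref{regular-vector-assumption} --- controlling only the $df_{i_0}$-component of $d\theta_k$ --- more transparent.

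Two small points need patching. First, to evaluate $\tau_{k,i}$ and $\delta_k$ at $v$ (and to conclude $K$-independence of $\{\delta_k\}$ from independence of the specializations $\{\delta_k(v)\}$) you must note that these coefficients have no poles at $v$: they are obtained from the polynomial coefficients of $d\theta_k$ by inverting $\Jac(f)$, hence lie in $J^{-1}(\SSS\otimes V)$, and $J$ vanishes nowhere on $V^\reg$. The paper makes this explicit (its $r_{i,j,k}$ lie in $(JQ)^{-\ell}\SSS$). Second, your parenthetical justification that $e_k^*>0$ is a non sequitur: regular vectors exist for every finite reflection group (the reflecting hyperplanes are a proper closed subset), so their existence cannot preclude nonzero $W$-fixed vectors. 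What is actually needed is $V^W=0$, equivalently that no basic derivation has degree zero; this holds whenever $W$ acts essentially on $V$ (in particular for the irreducible duality groups to which the theorem is applied), and the paper tacitly assumes the same thing at the same step when it asserts ``the coexponents $e_k^*$ are also nonzero.''
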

\begin{proof}[Proof of Theorem~\ref{linear-independence-theorem}.]
There is nothing to prove in the case $m=0$.
We consider first the extreme case  $m=1$, then
the opposite extreme case $m = \ell$, and finally the intermediate cases with $2 \leq  m \leq \ell-1$.

\vskip.1in
\noindent
{\sf The case $m=1$.}
Note that the set $\BBB^{(1)}$ 
that we want to show is a $K$-basis for $K \ot V^* \ot V$,
$$
\BBB^{(1)} = \{ df_i\, \theta_k: i \in [\ell] \setminus \{i_0\}, k \in [\ell]\} 
                      \quad \sqcup \quad  \{ d\theta_k : k \in [\ell]\},
$$
has substantial overlap with the known $K$-basis 
$\widetilde{\BBB}^{(1)}$ for $K \ot V^* \ot V$ given in 
Lemma~\ref{more-obvious-basis-lemma},
$$
\widetilde{\BBB}^{(1)} 
  = \{ df_i \, \theta_k : i,k \in [\ell] \} 
  = \{ df_i \, \theta_k : i \in [\ell] \setminus \{i_0\}, k \in [\ell]\}
               \quad \sqcup \quad \{ df_{i_0} \theta_k : k \in [\ell]\}.
$$
Thus we need only show that when working in the quotient
of $K \ot V^* \ot V$ by the $K$-subspace spanned by
$$
\BBB^{(1)} \cap \widetilde{\BBB}^{(1)} 
= \{ df_i \, \theta_k: i \in [\ell] \setminus \{i_0\}, k \in [\ell] \}\, ,
$$
a {\it nonsingular} matrix in $K^{\ell \times \ell}$ expresses the images of the 
elements 
$$
\{ d\theta_k : k \in [\ell] \}=
\BBB^{(1)} \setminus \BBB^{(1)} \cap \widetilde{\BBB}^{(1)}
$$
uniquely in terms of the images of the elements
$$
\{ df_{i_0} \, \theta_k : k \in [\ell] \}=
\widetilde{\BBB}^{(1)}  \setminus \BBB^{(1)} \cap \widetilde{\BBB}^{(1)}\, .
$$
Here is how one produces this $\ell \times \ell$ matrix.  First
use Lemma~\ref{more-obvious-basis-lemma} to uniquely write
\begin{equation}
\label{d-thetas-in-terms-of-more-obvious-basis}
d \theta_k = \sum_{i,j \in [\ell]} r_{i,j,k}\ df_i \, \theta_j
\quad\text{ for each } k\in [\ell]\, ,
\end{equation}
with $r_{i,j,k}$ in $(JQ)^{-\ell}S$.
Then the matrix in $K^{\ell \times \ell}$ that we wish to show is 
nonsingular is $(r_{i_0,j,k})_{j,k \in [\ell]}$. 
\vspace{1ex}\rule[-1ex]{0ex}{1ex}

To this end, apply to \eqref{d-thetas-in-terms-of-more-obvious-basis} the map $E \otimes \one_V$ from 
Definition~\ref{Euler-map-definitions}, giving
a system of equations in $K \otimes1 \otimes V$:
$$
e^*_k \ \theta_k = \sum_{i,j \in [\ell]} r_{i,j,k}\ 
\deg(f_i)\ f_i \, \theta_j
\quad\text{ for each } k\in [\ell].
$$
Since $\{ \theta_j \}_{j \in [\ell]}$ forms a $K$-basis for $K \otimes V$,
this gives a linear system in $K$: 
\begin{equation}
\label{K-linear-system}
e^*_k\ \delta_{j,k} = \sum_{i \in [\ell]} r_{i,j,k}\ \deg(f_i)\ f_i
\quad\text{ for each } j,k\in[\ell]\, ,
\end{equation}
where
$\delta_{j,k}$ denotes the Kronecker delta function.

To show that
$(r_{i_0,j,k})_{j,k \in [\ell]}$ in $K^{\ell \times \ell}$ is nonsingular,
we will evaluate each of its entries at a carefully chosen vector $v$.
By the hypothesis \eqref{regular-vector-assumption}, one can choose a 
vector $v$ in $V^\reg$ with the property that $f_i(v)=0$ 
for $i \neq i_0$. Since the coefficients $r_{i,j,k}$ lie in 
${(JQ)^{-\ell}} S$, and since $J, Q$ vanish nowhere on $V^\reg$,
one may evaluate the linear system
\eqref{K-linear-system} at $v$ to obtain a linear system over $\CC$:
\begin{equation}
\label{specialized-matrix-equation}
e^*_k\  \delta_{j,k} = r_{i_0,j,k}(v)\ \deg(f_{i_0})\ f_{i_0}(v)
\quad\text{ for each } j,k\in[\ell]\, .
\end{equation}
We claim $f_{i_0}(v) \neq 0$:  otherwise 
$f_i(v)=0$ for every $i$ in $[\ell]$, meaning $v$ is in
the common zero locus within $V$ of the homogeneous system of
parameters $f_1, \ldots, f_{\ell}$ in $\SSS$, forcing the
contradiction $v=0$.
As the coexponents $e^*_k$ are also nonzero, \eqref{specialized-matrix-equation}
shows that the specialized matrix $(r_{i_0,j,k}(v))_{j,k \in [\ell]}$ 
in $\CC^{\ell \times \ell}$ is diagonal with nonzero determinant.
Hence it is nonsingular, and so is the unspecialized matrix 
$(r_{i_0,j,k})_{j,k \in [\ell]}$ in $K^{\ell \times \ell}$, as desired.

\vskip.1in
\noindent
{\sf The case $m=\ell$.}
To show that
$
\BBB^{(\ell)} = \{ df_{[\ell]\setminus\{i_0\}}\, d\theta_k \}_{k \in [\ell]}
$
is $K$-linearly independent,
consider a dependence
$$
0 = \sum_{k \in [\ell]} c_k\ df_{[\ell]\setminus\{i_0\}}\, d\theta_k.
$$
Substitute the expressions for $d\theta_k$ 
from Equation~\eqref{d-thetas-in-terms-of-more-obvious-basis} to obtain
$$
0 = \sum_{i,j,k \in [\ell]} 
     c_k\ r_{i,j,k}\ df_{[\ell]\setminus\{i_0\}}\, df_i \, \theta_j 
= \sum_{j,k \in [\ell]} 
     c_k\ (-1)^{i_0}\ r_{i_0,j,k}\ df_{[\ell]}\, \theta_j\, .
$$
But $\{df_{[\ell]}\, \theta_j\}_{j \in [\ell]}$ is a $K$-basis
for $K \ot \wedge^{\ell}(V^*) \ot V$
by  Lemma~\ref{more-obvious-basis-lemma}, 
hence, 
$$
0 = \sum_{k \in [\ell]} 
     c_k\ (-1)^{i_0}\ r_{i_0,j,k}
\quad\text{ for each } j \in [\ell]\, .
$$
The matrix $(r_{i_0,j,k})_{j,k \in [\ell]}$ was already
shown nonsingular in the  $m=1$ case, and hence
$c_k=0$ for each $k$.

\vskip.1in
\noindent
{\sf The intermediate cases $2 \leq  m \leq  \ell-1$.}
To show that $\BBB^{(m)}$ is $K$-linearly independent, consider a dependence
\begin{equation}
\label{intermediate-m-dependence}
0 = \sum_{\substack{I \in \bbinom{[\ell] \setminus \{i_0\}}{m-1} \\
                     k \in [\ell]\rule{0ex}{2ex} }} 
c_{I,k}\ df_I \, d\theta_k
\quad + \quad
  \sum_{\substack{I \in \bbinom{[\ell] \setminus \{i_0\}}{m} \\
                     k \in [\ell]\rule{0ex}{2ex} }} 
c_{I,k}\ df_{I}\, \theta_k\, .
\end{equation}
It suffices to show all coefficients $c_{I,k}$ in the first sum vanish:
If so, then the second sum gives a dependence among a subset of the $K$-basis $\widetilde{\BBB}^{(m)}$ from 
Lemma~\ref{more-obvious-basis-lemma}, and hence
its coefficients $c_{I,k}$ must also vanish.
To this end, fix a subset $I_0 \in \sbinom{[\ell] \setminus \{i_0\}}{m-1}$
and consider its complementary subset within $[\ell] \setminus \{i_0\}$, 
namely,
$$
I_0^c:=[\ell] \setminus \{i_0\} \setminus I_0.
$$ 
Since 
$
|I_0^c|=(\ell-1)-(m-1)=\ell-m,
$
we note that
\begin{itemize}
\item
$I_0^c \cap I \neq \varnothing$ for each 
$I \subset [\ell] \setminus \{i_0\}$ with $|I|=m$, and
\item
$I_0^c \cap I \neq \varnothing$ for each
$I \subset [\ell] \setminus \{i_0\}$ with $|I|=m-1$ and $I \neq I_0$.
\end{itemize}
Consequently, multiplying both sides of 
Equation~\eqref{intermediate-m-dependence} by $df_{I_0^c}$ causes all terms in the second sum to vanish, 
as well as most of the terms in the first sum, leaving only
$$
0 = \sum_{k \in [\ell] } 
\pm c_{I_0,k}\ df_{[\ell] \setminus \{i_0\}}\, d\theta_k\, ,$$
with sign corresponding to that in 
$df_{I_0^c}\wedge df_{I_0}= \pm df_{[\ell] \setminus \{i_0\}}$.
But then by the case $m=\ell$ already proven, the
coefficients $c_{I_0,k}=0$ all vanish, and thus the
coefficients in the first sum of 
Equation~\eqref{intermediate-m-dependence} vanish, as desired.
This completes the proof of Theorem~\ref{linear-independence-theorem}.
\end{proof}

\section{Numerology of duality groups}
\label{numerology-works-section}

We now fix our focus on duality groups
and the candidate basis for $(\SSS\ot \wedge^m V^* \ot V)^W$
given in Theorem~\ref{main-result}. We check in this section
that these sets, comprising the putative basis,
have appropriate degree sum.  Let
\begin{equation}
\label{specific-candidate-basis}
\BBB^{(m)} = \Big\{ df_I \,\theta_k \Big\}_{_I \in \bbinom{[\ell-1]}{m},\, k \in [\ell]}
\ \sqcup \
\Big\{ df_I\, d\theta_k \Big\}_{I \in \bbinom{[\ell-1]}{m-1},\, k \in [\ell]} \ \quad\text{for}\quad 0 \leq m \leq \ell\, . 
\end{equation}
Here one interprets the second set as empty when $m=0$ and the first set as empty when $m=\ell$.
\begin{lemma}
\label{rightdegree}
For a duality group $W$, 
the sum of the degrees of elements in $\BBB^{(m)}$ above is
$$
\Delta(\wedge^m V^*\ot V)=(\ell-1)\sbinom{\ell-1}{m-1}N + \sbinom{\ell-1}{m}N^*.
$$
\end{lemma}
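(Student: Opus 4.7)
The plan is a direct numerical verification: I would compute the polynomial degree of each element of $\BBB^{(m)}$, bundle the totals using binomial identities (Pascal's rule), and then collapse the answer using the duality identity $\ell h = N + N^*$ from~\eqref{h-expressed-in-N-and-N*}. The eventual target is the value $(\ell-1)\binom{\ell-1}{m-1}N + \binom{\ell-1}{m}N^*$ computed by Proposition~\ref{Molien-derivative-prop}.

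First I would record the building-block degrees. Since $df_i = \sum_j \frac{\partial f_i}{\partial x_j}\otimes x_j$, one has $\deg(df_i) = e_i$, while $\deg(\theta_k) = e_k^*$. Applying $d$ to a basic derivation differentiates its polynomial coefficients and absorbs one $x_j$ into the wedge factor, so $\deg(d\theta_k) = e_k^* - 1$ (consistent with Example~\ref{rank-one-example}). Consequently
$$
\deg(df_I\,\theta_k) = \sum_{i \in I} e_i + e_k^*\, ,
\qquad
\deg(df_I\, d\theta_k) = \sum_{i \in I} e_i + e_k^* - 1\, .
$$

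Next I would split the degree sum over $\BBB^{(m)}$ into an ``exponents'' piece and a ``coexponents'' piece. Each $e_i$ with $i \in [\ell-1]$ appears in $\binom{\ell-2}{m-1}$ subsets $I\in\binom{[\ell-1]}{m}$ and in $\binom{\ell-2}{m-2}$ subsets $I\in\binom{[\ell-1]}{m-1}$; by Pascal these sum to $\binom{\ell-1}{m-1}$. Multiplying by the $\ell$ choices of $k$ and using $\sum_{i=1}^{\ell-1}e_i = N - e_\ell$ from~\eqref{N-and-N*-definition}, the exponent contribution is $\ell\binom{\ell-1}{m-1}(N-e_\ell)$. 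The coexponents contribute $\binom{\ell}{m}N^*$ (using $\sum_k e_k^* = N^*$ and Pascal once more), and the ``$-1$'' in $\deg(d\theta_k)$ contributes $-\ell\binom{\ell-1}{m-1}$. Summing, the total degree of the elements of $\BBB^{(m)}$ equals
$$
\ell\binom{\ell-1}{m-1}(N - e_\ell - 1) \;+\; \binom{\ell}{m}N^*\, .
$$

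Finally I would invoke the duality hypothesis. Because $W$ is a duality group, $e_\ell + 1 = h$ and $\ell h = N + N^*$ by~\eqref{h-expressed-in-N-and-N*}, so $\ell(N - e_\ell - 1) = (\ell-1)N - N^*$. Substituting, and using once more the identity $\binom{\ell}{m} - \binom{\ell-1}{m-1} = \binom{\ell-1}{m}$, the total collapses to $(\ell-1)\binom{\ell-1}{m-1}N + \binom{\ell-1}{m}N^*$, matching $\Delta(\wedge^m V^*\ot V)$ as provided by Proposition~\ref{Molien-derivative-prop}. There is no substantive obstacle in this lemma; the one nontrivial input is the duality identity $\ell h = N + N^*$, which is precisely the feature that distinguishes this computation from general complex reflection groups and explains why the distinguished index $i_0 = \ell$ is the correct choice for the $R$-basis in Theorem~\ref{main-result}.
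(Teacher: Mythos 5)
Your proposal is correct and follows essentially the same route as the paper's proof: both split the degree sum into exponent and coexponent contributions, count multiplicities with Pascal's rule, and reduce the result via $e_\ell+1=h$ together with the duality identity $N+N^*=\ell h$ from~\eqref{h-expressed-in-N-and-N*}. The intermediate expression $\ell\binom{\ell-1}{m-1}(N-e_\ell-1)+\binom{\ell}{m}N^*$ is exactly the paper's $\ell\binom{\ell-1}{m-1}(N-h)+\binom{\ell}{m}N^*$ in different notation, and the final collapse matches Proposition~\ref{Molien-derivative-prop} as required.
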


\begin{proof}
Using the shorthand
notation $e_I:=\sum_{i \in I} e_i$
for subsets $I \subset [\ell]$,
note that 
$$
\deg (df_I\, \theta_k) = e_I+e^*_k
\quad\quad\text{and}\quad\quad
\deg (df_I \, d\theta_k) = e_I+e^*_k-1,
$$
and therefore the sum of degrees for $\BBB^{(m)}$ is
$$
\begin{aligned} 
&\sum_{\substack{I \in \bbinom{[\ell-1]}{m},\\ k \in [\ell]
\rule{0ex}{2ex}}}
 (e_I + e^*_k)
 +
\sum_{\substack{I \in \bbinom{[\ell-1]}{m-1},\\k \in [\ell]
\rule{0ex}{2ex}}}
  (e_I + e^*_k-1) \\
&\qquad=
\sum_{k \in [\ell]}
  \left( \sum_{I \in  \bbinom{[\ell-1]}{m}} e^*_k
         +  \sum_{I \in  \bbinom{[\ell-1]}{m-1}} (e^*_k-1) \right)
+\sum_{k\in [\ell]} \left(
            \sum_{I \in  \bbinom{[\ell-1]}{m}} e_I
           + \sum_{I \in  \bbinom{[\ell-1]}{m-1}} e_I
        \right).
\end{aligned}
$$
Now the first sum over $k$ can be rewritten as 
$$ 
\sbinom{\ell-1}{m} N^* + \sbinom{\ell-1}{m-1}(N^*- \ell)
=\sbinom{\ell}{m} N^* - \ell\sbinom{\ell-1}{m-1},
$$
while the second sum over $k$ can be rewritten as
$$
\ell \left( 
               \sum_{i \in [\ell-1]} \sbinom{\ell-2}{m-1} e_i +
               \sum_{i \in [\ell-1]} \sbinom{\ell-2}{m-2} e_i
             \right) 
= \ell\sbinom{\ell-1}{m-1} ( N - (h-1) )
$$
since 
$\sum_{i \in [\ell-1]} e_i 
 = (\sum_{i \in [\ell]} e_i ) - e_\ell
 = N- (h-1)$
by Equation~\eqref{Shephard-Todd-sum-of-exponents-fact},
as $e_\ell+1=\deg(f_\ell)=h$ by definition.
Hence the degree sum is 
$$
\sbinom{\ell}{m} N^*  + \ell N\sbinom{\ell-1}{m-1} 
-\ell h \sbinom{\ell-1}{m-1}
=
 \sbinom{\ell-1}{m} N^*  + (\ell-1)\sbinom{\ell-1}{m-1}N
=\Delta(\wedge^m V^*\ot V),
$$
where the first equality used the duality group equation
$N + N^* = h \ell$ (see~(\ref{h-expressed-in-N-and-N*})).
\end{proof}


\section{Duality groups and 
proof of Theorem~\ref{main-result}}
\label{well-generated-section}
We now investigate differential derivations
invariant under duality groups
and prove Theorem~\ref{main-result}.
We combine the linear independence results from Section~\ref{independence-section} with the numerology of the last section.
We first check that 
the hypothesis~\eqref{regular-vector-assumption} in 
Theorem~\ref{linear-independence-theorem} holds for all duality groups when one
chooses the index $i_0=\ell$.
We emphasize that although both
Lemma~\ref{irreducible-implies-unique-lowest-coexponent}
and its consequence 
Corollary~\ref{well-generated-implies-unique-highest-degree}
below could easily be checked case-by-case, we give case-free proofs
so that Theorem~\ref{main-result} relies on no classification
of reflection groups.

\begin{lemma}
\label{irreducible-implies-unique-lowest-coexponent}
Irreducible complex reflection groups have exactly one
coexponent equal to $1$.
\end{lemma}
 
\begin{proof} 
Since $V$ is a nontrivial irreducible $W$-representation,
and since the polynomial ring $S=\Sym(V^*)$ carries
the trivial representation $\one$ in its degree zero component $S_0$
and the representation $V^*$ in its degree one component $S_1$,
Schur's Lemma implies
$$
\begin{array}{rclclclcl}
\dim ( S_0 \otimes  V )^W
&=&\dim( \one \otimes V)^W
&=&\dim V^W
&=& 0 \, ,
& &\\
\dim ( S_1 \otimes  V )^W
&=&\dim ( V^* \otimes  V )^W
&=&\dim \Hom_{\CC}( V, V )^W
&=&\dim \Hom_{\CC W}( V, V )
&=&1.
\end{array}
$$
Hence among the $S^W$-basis elements $\theta_1,\ldots,\theta_\ell$ for 
$( S \otimes  V )^W$, there must be none of degree zero, and
exactly one of degree one; in fact, the latter must be a multiple of 
the {\it Euler derivation} $\theta_E:=x_1 \ot y_1 + \cdots + x_\ell \ot y_\ell$.
\end{proof}

\begin{remark}
The above proof shows more generally that even for non-reflection (finite) groups $W$ acting
nontrivially and irreducibly on $V=\CC^\ell$, there will be, up to scaling, only
the Euler derivation $\theta_E$ as a $W$-invariant derivation in $(S \otimes V)^W$ of degree one.
\end{remark}

\begin{corollary}
\label{well-generated-implies-unique-highest-degree}
For any duality group $W$, 
there is a unique highest exponent $e_\ell$ and 
accompanying unique highest degree 
$h=\deg(f_\ell)=e_\ell+1$.
\end{corollary}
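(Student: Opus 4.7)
The plan is to read off the corollary as an immediate translation of Lemma~\ref{well-generated-implies-unique-lowest-coexponent} through the duality relation $e_i + e_i^* = h$. The unique-coexponent-equal-to-$1$ statement lives on the coexponent side; the desired unique-highest-degree statement lives on the exponent side; duality provides the bijection that swaps these two phenomena.

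First I would establish that $e_\ell^* = 1$ and is the unique coexponent equal to $1$. Recall coexponents $e_1^* \geq \cdots \geq e_\ell^*$ are the degrees $\deg(\theta_i)$ of basic $W$-invariant derivations, so they are positive integers (no nonzero constant-coefficient derivation $\sum c_i \otimes y_i$ can be $W$-invariant, since $W$ acts irreducibly on $V$ for a duality group). The Euler derivation $\theta_E = \sum_{i=1}^\ell x_i \otimes y_i$ is $W$-invariant and has linear coefficients, so at least one coexponent equals $1$, and since $e_\ell^*$ is the smallest, $e_\ell^* = 1$. Lemma~\ref{well-generated-implies-unique-lowest-coexponent} then asserts that this is the unique such coexponent, so in particular $e_{\ell-1}^* \geq 2 > 1 = e_\ell^*$.

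Next I would transfer this across the duality relation. Evaluating $e_i + e_i^* = h$ at $i = \ell$ gives $e_\ell = h - 1$, and at $i = \ell - 1$ gives $e_{\ell-1} = h - e_{\ell-1}^* \leq h - 2 < h - 1 = e_\ell$. Hence $e_\ell$ is strictly greater than every other exponent, and therefore $\deg(f_\ell) = e_\ell + 1 = h$ is the unique largest degree among the basic invariants. This completes the argument.

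The main (very mild) obstacle is making sure the ordering conventions line up: exponents are listed nondecreasingly while coexponents are listed nonincreasingly, so the duality relation pairs the $i$-th smallest exponent with the $i$-th largest coexponent. With that bookkeeping in hand, uniqueness of the minimum coexponent translates directly into uniqueness of the maximum exponent, and no case analysis or appeal to the Shephard--Todd classification is needed.
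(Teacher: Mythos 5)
Your proposal is correct and is essentially the argument the paper intends: the corollary is stated as an immediate consequence of Lemma~\ref{well-generated-implies-unique-lowest-coexponent}, obtained by noting that the Euler derivation forces the minimal coexponent to be $e_\ell^*=1$, the lemma makes it the unique coexponent equal to $1$, and the duality relation $e_i+e_i^*=h$ transports this to uniqueness of the highest exponent $e_\ell=h-1$ and hence of the highest degree $h$. Your care with the opposite ordering conventions for exponents and coexponents is exactly the right bookkeeping.
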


\begin{lemma} 
\label{well-generated-implies-h-regular}
Duality groups $W$ satisfy 
hypothesis \eqref{regular-vector-assumption} 
in Theorem~\ref{linear-independence-theorem}
with $i_0=\ell$, that is,
$$
V^\reg \ \cap\ \bigcap_{i=1}^{\ell-1} f_i^{-1}\{0\} \neq \varnothing.
$$
\end{lemma}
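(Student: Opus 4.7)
The plan is to invoke Springer's theory of regular elements to exhibit such a vector $v$ directly. Recall that $v\in V$ is a \emph{regular eigenvector} of $w\in W$ with eigenvalue $\zeta$ when $v\in V^{\reg}$ and $wv=\zeta v$, and a positive integer $d$ is a \emph{regular number} for $W$ if there exists such a pair $(w,v)$ with $\zeta$ a primitive $d$-th root of unity. The strategy is to produce an eigenvector at a primitive $h$-th root of unity; then $W$-invariance of each $f_i$ will automatically force $f_i(v)=0$ whenever $h\nmid\deg(f_i)$, and Corollary~\ref{well-generated-implies-unique-highest-degree} will ensure this kills exactly $f_1,\ldots,f_{\ell-1}$.

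First, I invoke the uniform theorem of Lehrer--Springer (equivalently, Bessis' characterization of well-generated groups via regular numbers) asserting that for every duality group $W$ the Coxeter number $h=\deg(f_\ell)$ is a regular number. Fix a corresponding pair $(w,v)$ with $wv=\zeta v$, $\zeta:=e^{2\pi i/h}$, so that $v\in V^{\reg}$ by definition.

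Second, apply the standard Springer eigenvalue calculation: since each $f_i$ is $W$-invariant and homogeneous of degree $d_i:=\deg(f_i)$,
$$
f_i(v)\;=\;f_i(wv)\;=\;f_i(\zeta v)\;=\;\zeta^{d_i}f_i(v),
$$
so $(1-\zeta^{d_i})f_i(v)=0$, forcing $f_i(v)=0$ whenever $h\nmid d_i$. By Corollary~\ref{well-generated-implies-unique-highest-degree}, $d_\ell=h$ is the unique largest degree, hence $d_i<h$ and in particular $h\nmid d_i$ for every $i<\ell$. Thus $f_1(v)=\cdots=f_{\ell-1}(v)=0$ while $v\in V^{\reg}$, establishing the claim.

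The sole nontrivial input is the case-free existence of a regular element of order $h$ in any duality group, which I take from Lehrer--Springer / Bessis; everything else is a one-line invariance argument together with the already-proved uniqueness of the highest degree. This is the only potential obstacle to a fully classification-free proof, and it is precisely the point where the duality hypothesis enters, as non-well-generated irreducible groups need not admit a regular element at the largest degree.
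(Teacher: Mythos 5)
Your proposal is correct and follows essentially the same route as the paper: exhibit a regular eigenvector for a primitive $h$-th root of unity, use $W$-invariance to force $f_i(v)=0$ whenever $h\nmid\deg(f_i)$, and invoke Corollary~\ref{well-generated-implies-unique-highest-degree} to see this is exactly $i<\ell$. The only difference is that you quote ``$h$ is a regular number for duality groups'' as a black box, whereas the paper derives it case-freely from the Lehrer--Michel criterion (a positive integer $d$ is regular iff it divides as many degrees as codegrees) together with the duality relation $e_i+e_i^*=h$ --- which is in fact the standard case-free proof of the very fact you cite --- and the paper uses the full equality in Springer's Proposition~3.2(i) where your invariance computation uses only the easy containment, which suffices.
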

\begin{proof}
(cf. \cite[p.~4]{BessisR})
Springer~\cite[Prop.~3.2(i)]{Springer} showed that if $W$
is a complex reflection group  with 
basic invariants $f_1,\ldots,f_\ell$ and $\zeta$ is any primitive
$d^{th}$ root of unity in $\CC$, then
\begin{equation}
\label{Springer-fact}
\bigcap_{\substack{i=1,\ldots,\ell:\\ d \, \nmid\,  \deg(f_i)}}
f_i^{-1}\{0\} 
= \bigcup_{g \in W} \ker(\zeta \one_V - g).
\end{equation}
On the other hand, Lehrer and Michel~\cite[Thm.~1.2]{LehrerMichel} 
showed existence of $g$ in $W$ with
$\ker(\zeta \one_V - g) \cap V^\reg \neq \varnothing$
if and only if $d$ divides as many 
degrees $\deg(f_i)=e_i+1$ as codegrees $e^*_i-1$.
For a duality group $W$, the equations $e_i + e^*_i=h=\deg(f_\ell)$ 
imply $h$ 
divides as many degrees as codegrees.  Also,
Corollary~\ref{well-generated-implies-unique-highest-degree}
implies that $f_\ell$ is the {\it only} basic invariant of
degree $h$, so that \eqref{Springer-fact} gives the result.
\end{proof}

We can now deduce the two equivalent statements of our main result.
Recall that $R$ is the exterior subalgebra of 
the $W$-invariant forms 
$(\SSS \otimes \wedge V^*)^W
=\bigwedge_{\SSS^W}\{df_1,\ldots,df_{\ell}\}$
generated by all $df_i$ except for the last one $df_\ell$, that is,
$
R:=\bigwedge_{\SSS^W}
\{df_1,\ldots,df_{\ell-1}\}.
$

\vskip.1in
\noindent
{\bf Theorem~\ref{main-result}.}
{\it
For $W$ a duality (well-generated) complex reflection group,
$(\SSS \otimes \wedge V^* \otimes V)^W$ forms a
free $R$-module on $R$-basis 
$
\{\theta_1,\ldots,\theta_\ell, \,\, d\theta_1, \ldots, d \theta_\ell\}.
$ 
Equivalently, $(\SSS\ot \wedge^m V^* \ot V)^W$ for $0 \leq m \leq \ell$
has $\SSS^W$-basis
\begin{equation*}
\BBB^{(m)} = \Big\{ df_I \theta_k \Big\}_{I \in \bbinom{[\ell-1]}{m},\, k \in [\ell]}
\ \sqcup \
\Big\{ df_I d\theta_k \Big\}_{I \in \bbinom{[\ell-1]}{m-1},\, k \in [\ell]}\, .
\end{equation*}
}
\vskip.1in

\begin{proof}[Proof of Theorem~\ref{main-result}]
Lemma~\ref{well-generated-implies-h-regular} and
Theorem~\ref{linear-independence-theorem} imply
that $\BBB^{(m)}$ has nonsingular coefficient matrix.
Indeed, when $i_0=\ell$,
the set $\BBB^{(m)}$ in \eqref{more-general-putative-basis}
agrees with that in~\eqref{specific-candidate-basis}.
Note that this set has cardinality 
$$
\sbinom{\ell-1}{m-1} \ell + \sbinom{\ell-1}{m} \ell
= \sbinom{\ell}{m} \ell=\dim_K \left( K \ot \wedge^m V^*  \ot V \right)
=\rank_{\SSS^W}(\SSS\ot\wedge^mV^*\ot V)^W\, .
$$
Lemma~\ref{rightdegree} shows that  their degree sum
is appropriate, and the theorem then follows 
from Corollary~\ref{key-numerology-corollary}.
\end{proof}

\begin{remark}
Theorem~\ref{main-result} has an amusingly compact rephrasing:  defining
by convention $f_0:=1$,  the last assertion of the 
theorem is equivalent to the assertion
that $(\SSS\ot \wedge^m V^* \ot V)^W$ is a 
free $\SSS^W$-module on basis
\begin{equation}
\label{cute-rephrased-basis}
\{  \
df_{i_m} \cdots df_{i_2} \cdot d( f_{i_1} \theta_k ) 
:\ 
0 \leq i_1 < i_2 < \cdots <i_m \leq \ell-1
\text{ and } k \in [\ell]
\,  \}\, .
\end{equation}
The reason is that  the elements in \eqref{cute-rephrased-basis} with $i_1=0$
coincide with the basis elements $\{df_I d\theta_k\}$ in the second part of 
$\BBB^{(m)}$, while the elements in \eqref{cute-rephrased-basis} with $i_1 \geq 1$
are almost the same as the basis elements $\{df_I  \theta_k\}$ in the first part of 
$\BBB^{(m)}$, but differ from them by $f_{i_1}$ times an element in the second part
of $\BBB^{(m)}$.
\end{remark}

\section{Two dimensional reflection groups}
\label{2-dimensional-groups-section}

We now consider reflection groups acting on $2$-dimensional complex space,
i.e., the case when $\ell=2$.
We found in Theorem~\ref{main-result}
an $\SSS^W$-basis for $(\SSS \ot \wedge V^* \ot V)^W$
when $W$ is a duality group.
Here, we find a 
{\it different} choice of basis
that works for {\it any} 
rank $2$ complex reflection group $W$, 
duality or not.  
Suppose we have
basic derivations in $(\SSS \otimes V)^W = (\SSS\otimes 1 \otimes V)^W$
\begin{equation}
\label{rank-two-derivations}
\begin{aligned}
\theta_1&:=x_1 \ot 1 \otimes y_1 + x_2 \ot 1 \otimes y_2\ (=\theta_E)
\, , \qquad\text{and}\\
\theta_2&:=\ a\, \otimes 1\ot y_1 + \ b\, \ot 1\otimes y_2\, ,\\
\end{aligned}
\end{equation}
for some $a,b$ in $\CC[x_1,x_2]$.
With this indexing,  
$e_1^*=1$ and $e_2^*=\deg(a)=\deg(b)$.

\begin{theorem}
\label{rank-two-theorem}
For a complex reflection group $W$ acting on $\CC^2$,
and $0 \leq m \leq 2$, the following sets $\BBB^{(m)}$
give free $S^W$-bases for
$(\SSS \ot \wedge^m V^*  \ot V)^W$:
\begin{equation}
\begin{aligned}
\BBB^{(0)}&:=\{\theta_1, \theta_2 \} \\
\BBB^{(1)}&:=\{df_1\, \theta_1, df_2\, \theta_1, 
d\theta_1, d\theta_2 \} \\
\BBB^{(2)}&:=\{df_1\, d\theta_1, df_2\, d\theta_1 \}.
\end{aligned}
\end{equation}
\end{theorem}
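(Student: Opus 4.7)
The plan is to apply Corollary~\ref{key-numerology-corollary} for each $m=0,1,2$: verify that $\BBB^{(m)}$ has the correct degree sum $\Delta(\wedge^m V^*\ot V)$, and then establish $K$-linear independence in $K\ot \wedge^m V^*\ot V$. Specializing Proposition~\ref{Molien-derivative-prop} to $\ell=2$ gives $\Delta(V)=N^*$, $\Delta(V^*\ot V)=N+N^*$, and $\Delta(\wedge^2 V^*\ot V)=N$. A routine check against $\deg(df_i)=e_i$, $\deg(\theta_k)=e^*_k$, and $\deg(d\theta_k)=e^*_k-1$, together with $e^*_1=1$ (since $\theta_1=\theta_E$ is the Euler derivation), matches these sums.

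The case $m=0$ is the classical fact that $\theta_1,\theta_2$ form an $\SSS^W$-basis, and hence a $K$-basis, for $(\SSS\ot V)^W$. For $m=2$, using $d\theta_1 = 1\ot x_1\ot y_1 + 1\ot x_2\ot y_2$ and the antisymmetry of the wedge, a direct expansion shows that the coefficient matrix of $\{df_1\,d\theta_1,\,df_2\,d\theta_1\}$ in the $K$-basis $\{(x_1\wedge x_2)\ot y_j\}_{j=1,2}$ of $K\ot \wedge^2 V^*\ot V$ equals $\Jac(f)$ up to a permutation and signs, with determinant $\pm J\neq 0$.

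The main case is $m=1$. Given any supposed $K$-linear dependence
\[
\alpha_1\,df_1\,\theta_1 + \alpha_2\,df_2\,\theta_1 + \beta_1\,d\theta_1 + \beta_2\,d\theta_2 = 0,
\]
I plan to apply the map $E\ot\one_V$ from Definition~\ref{Euler-map-definitions}. By Euler's identities~\eqref{Euler-and-d-on-polynomial} and~\eqref{Euler-and-d-on-derivation}, the dependence transforms into
\[
\bigl(\alpha_1\deg(f_1)f_1 + \alpha_2\deg(f_2)f_2 + \beta_1 e^*_1\bigr)\theta_1 + \beta_2 e^*_2\,\theta_2 = 0
\]
in $K\ot V$. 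Since $\{\theta_1,\theta_2\}$ is $K$-linearly independent and $e^*_2\neq 0$, this forces $\beta_2=0$. Rewriting the residual relation as $(\alpha_1 df_1+\alpha_2 df_2)\theta_1 = -\beta_1\,d\theta_1$ and expanding $\alpha_1 df_1+\alpha_2 df_2 = p\ot x_1 + q\ot x_2$ in coordinates, comparison on the basis elements $x_1\ot y_2$ and $x_2\ot y_1$ (which have vanishing right-hand side) forces $p=q=0$, whence $\beta_1=0$; then $K$-linear independence of $\{df_1,df_2\}$, equivalently $\det\Jac(f)=J\neq 0$, gives $\alpha_1=\alpha_2=0$.

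The main obstacle is the $m=1$ case: unlike the duality argument of Theorem~\ref{linear-independence-theorem}, which relies on the regularity hypothesis~\eqref{regular-vector-assumption} and omits a basic invariant $f_{i_0}$, here we must treat arbitrary rank-two reflection groups, including nonduality ones. The remedy is to pair \emph{both} $df_1$ and $df_2$ with the Euler derivation $\theta_1$: because $e^*_1=1$ holds universally, the map $E\ot\one_V$ cleanly isolates the coefficient $\beta_2$, after which coordinate bookkeeping finishes the argument.
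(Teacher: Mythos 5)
Your proof is correct, and it shares the paper's overall skeleton (verify the degree sums from Proposition~\ref{Molien-derivative-prop} and then check $K$-linear independence, concluding via Corollary~\ref{key-numerology-corollary}), but the way you establish independence in the crucial case $m=1$ is genuinely different. The paper writes out the full $4\times 4$ coefficient matrix $\Coef(\BBB^{(1)})$ in the basis $\{dx_i\otimes y_j\}$ and computes its determinant explicitly, obtaining $e_2^*\,JQ$; this brute-force route has the side benefit of exhibiting the exact Saito-criterion factorization $c\cdot J^{(\ell-1)\binom{\ell-1}{m-1}}Q^{\binom{\ell-1}{m}}$ of Corollary~\ref{Saito-criterion}(c). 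You instead apply the Euler map $E\otimes\one_V$ of Definition~\ref{Euler-map-definitions} to a putative dependence, using \eqref{Euler-and-d-on-derivation} to isolate and kill the coefficient of $d\theta_2$, and then finish with a short coordinate comparison; this is the mechanism the paper reserves for the harder Theorem~\ref{linear-independence-theorem}, and your adaptation of it to arbitrary rank-two groups --- pairing \emph{both} $df_1$ and $df_2$ with the Euler derivation $\theta_1=\theta_E$ so that no basic invariant need be omitted and no regularity hypothesis is needed --- is a clean conceptual alternative. Both arguments implicitly use $e_2^*\neq 0$ (yours in dividing out $\beta_2 e_2^*$, the paper's in concluding $e_2^*JQ\neq 0$), which holds because the reflection representation is essential, so you are on equal footing there. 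Your $m=0$ and $m=2$ cases match the paper's.
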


\begin{proof}
The $m=0$ case is immediate.
For $m=1,2$, the basic derivations 
as in~\eqref{rank-two-derivations} give
$$
\Coef(\BBB^{(2)})
=\bordermatrix{~ 
& \text{\tiny $df_1 d\theta_1$} & 
\text{\tiny $df_2 d\theta_1$} 
\cr & & \cr
\text{\tiny $1\otimes x_1 \wedge x_2 \otimes y_1$} & 
-\frac{\del f_1}{\del x_2} &
-\frac{\del f_2}{\del x_2} \cr
  & &  \cr
\text{\tiny $1\otimes x_1 \wedge x_2 \otimes y_2$} & 
\frac{\del f_1}{\del x_1} & \frac{\del f_2}{\del x_1}\cr
  & & },
 \quad
\Coef(\BBB^{(1)})
= \bordermatrix{~ & 
\text{\tiny $df_1 \theta_1$} & 
\text{\tiny $df_2 \theta_1$} & 
\text{\tiny $d\theta_1$} & 
\text{\tiny $d\theta_2$} \cr
 & & & & \cr
\text{\tiny $1\otimes x_1 \otimes y_1$} & x_1\frac{\del f_1}{\del x_1} & x_1\frac{\del f_2}{\del x_1} & 1         & \frac{\del a}{\del x_1} \cr
 & & & & \cr
\text{\tiny $1 \otimes x_2 \otimes y_2$} &x_2\frac{\del f_1}{\del x_2} & x_2\frac{\del f_2}{\del x_2} & 1 & \frac{\del b}{\del x_2} \cr
 & & & & \cr
\text{\tiny $1\otimes x_1 \otimes y_2$} & x_2\frac{\del f_1}{\del x_1} & x_2\frac{\del f_2}{\del x_1} & 0 & \frac{\del b}{\del x_1} \cr
 & & & & \cr
\text{\tiny $1\otimes x_2 \otimes y_1$} & x_1\frac{\del f_1}{\del x_2} & x_1\frac{\del f_2}{\del x_2} & 0 & \frac{\del a}{\del x_2} \cr
  & & & & \cr}.
$$
One now computes that $\BBB^{(1)},  \BBB^{(2)}$ have the right degree sums
and satisfy the hypotheses of Corollary~\ref{key-numerology-corollary}:
$$
\begin{aligned}
\det \Coef(\BBB^{(2)})&= \tfrac{\del f_1}{\del x_1} 
\tfrac{\del f_2}{\del x_2}
-\tfrac{\del f_1}{\del x_2} \tfrac{\del f_2}{\del x_1} 
=\det \Jac(f_1,f_2)
=J
=J^{(\ell-1)\binom{\ell-1}{m-1}} Q^{\binom{\ell-1}{m}} \text{ for }\ell=2=m,\\
\det \Coef(\BBB^{(1)})
&= \left(
\tfrac{\del f_1}{\del x_1} \tfrac{\del f_2}{\del x_2}
-\tfrac{\del f_1}{\del x_2} \tfrac{\del f_2}{\del x_1}
\right)
\left( 
x_1 \left( x_1\tfrac{\del b}{\del x_1} + x_2 \tfrac{\del b}{\del x_2}\right)
- x_2 \left( x_2 \tfrac{\del a}{\del x_2} + x_1 \tfrac{\del a}{\del x_1}\right)
\right)
\rule{0ex}{5ex} \\
&= J\, (x_1 e_2^* b - x_2 e_2^* a) \\
&= e_2^*\, J \det(M(\theta_1,\theta_2))  
= e_2^*\, JQ
=e_2^*\, J^{(\ell-1)\binom{\ell-1}{m-1}} 
Q^{\binom{\ell-1}{m}} \text{ for }\ell=2, m=1.
\rule{0ex}{3ex} 
\qedhere
\end{aligned}
$$
\end{proof}

This gives an immediate Hilbert series corollary when $\ell=2$.
\begin{corollary}
For a complex reflection group $W$ acting on $\CC^2$,
$$
\frac{\Hilb( (\SSS \ot \wedge V^*  \ot V)^W; q,t)} 
{\Hilb( \SSS^W;q)}= 
 t^0(q+q^{e_2^*})
      +t^1(1+q^{e_2^*-1}+q^{e_1+1}+q^{e_2+1})
      +t^2(q^{e_1}+q^{e_2}).  
$$
\end{corollary}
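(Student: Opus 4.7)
The plan is to read the Hilbert series directly off the three explicit free $\SSS^W$-bases produced in Theorem~\ref{rank-two-theorem}. Since $(\SSS\ot \wedge^m V^*\ot V)^W$ is a free graded $\SSS^W$-module on $\BBB^{(m)}$ for each $m=0,1,2$, the bigraded Hilbert series factors as
$$
\Hilb\bigl((\SSS \ot \wedge V^*\ot V)^W;q,t\bigr)
=\Hilb(\SSS^W;q)\cdot \sum_{m=0}^2 t^m \sum_{\psi\in \BBB^{(m)}} q^{\deg\psi},
$$
so it suffices to tabulate the polynomial degrees of the basis elements and identify each coefficient of $t^m$.

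First I would record the relevant degrees: $\theta_k$ has degree $e_k^*$; $df_i$ has degree $e_i$; and because $d$ drops polynomial degree by one while raising exterior degree, $d\theta_k$ has polynomial degree $e_k^*-1$. Hence $df_i\,\theta_k$ has degree $e_i+e_k^*$ and $df_i\,d\theta_k$ has degree $e_i+e_k^*-1$. Using the normalization $\theta_1=\theta_E$, so that $e_1^*=1$ and $d\theta_1$ is a polynomial constant of degree $0$, these formulas give exactly what is needed.

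Next I would simply sum up, exterior degree by exterior degree. For $m=0$, $\BBB^{(0)}=\{\theta_1,\theta_2\}$ contributes $q^{e_1^*}+q^{e_2^*}=q+q^{e_2^*}$. For $m=1$, $\BBB^{(1)}=\{df_1\theta_1,df_2\theta_1,d\theta_1,d\theta_2\}$ contributes $q^{e_1+1}+q^{e_2+1}+q^{0}+q^{e_2^*-1}=1+q^{e_2^*-1}+q^{e_1+1}+q^{e_2+1}$. For $m=2$, $\BBB^{(2)}=\{df_1\,d\theta_1,df_2\,d\theta_1\}$ contributes $q^{e_1}+q^{e_2}$. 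Assembling the three pieces yields precisely the stated formula for the quotient $\Hilb\bigl((\SSS\ot\wedge V^*\ot V)^W;q,t\bigr)/\Hilb(\SSS^W;q)$.

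There is essentially no obstacle here beyond the degree bookkeeping, since the freeness and the explicit bases have already been established in Theorem~\ref{rank-two-theorem}. The only care required is using the Euler identity $e_1^*=1$, which makes $d\theta_1$ contribute $q^0=1$ and makes the $t^1$ coefficient come out symmetric in the displayed form.
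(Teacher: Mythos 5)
Your proposal is correct and matches the paper's intent exactly: the paper presents this as an "immediate" consequence of Theorem~\ref{rank-two-theorem}, obtained by summing $q^{\deg\psi}$ over the explicit free $\SSS^W$-bases $\BBB^{(0)},\BBB^{(1)},\BBB^{(2)}$, which is precisely your degree bookkeeping (including the use of $e_1^*=1$ so that $d\theta_1$ contributes $q^0$).
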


In the case of a duality group $W$ with $\ell=2$, one can
check that this agrees with 
description \eqref{Hilbert-series-consequence},
bearing in mind that $e_1^*=1$ and $e_2^*+e_1=h=e_2+1$ 
with the above conventions.

\vspace{2ex}


\section{The reflection group $G_{31}$}
\label{G31-section}

The group $W=G_{31}$ is an irreducible complex reflection group 
of rank $4$ containing $60$ reflections, each of order $2$
(so $N^*=N=60$), although it is not the complexification of
a Coxeter group.  It is not a duality group; the exponents are $(7,11,19,23)$
and the coexponents are $(1,13,17,29)$.  
Using a computer to complete
a Molien-style summation as
in Lemma~\ref{standard-Molien-variant}
for $W=G_{31}$ (taking $U=V=\CC^4$),  one obtains
\vspace{1ex}
\begin{equation}
\label{G31-hilbert-series}
\frac{\Hilb( (\SSS \ot \wedge V^*  \ot V)^W; q,t)} 
{\Hilb( \SSS^W;q)}
=(1 + q^7 t)(1 + q^{11} t)(q+t)(1+q^{12})(1 + q^{19} t + q^{16} + q^{23} t)\, .
\end{equation}
It is not hard to see that this is inconsistent with a description of
$(\SSS \ot \wedge^m V^*  \ot V)^W$ exactly as in Theorem~\ref{main-result}.
However, rewriting 
the right side of \eqref{G31-hilbert-series} as
$$(1 + q^7 t)(1 + q^{11} t) \cdot
   (q+t)\left[(1 + q^{12} + q^{16} + q^{28}) + 
           (q^{19}t + q^{23} t+ q^{19+12}+q^{23+12}t) \right]
$$
suggests a modified statement.  
Let 
$$R':=\bigwedge_{\SSS^W}\{df_1,df_2\}$$
as a subalgebra of 
$(\SSS \ot \wedge V^*)^W=\bigwedge_{\SSS^W}\{df_1,df_2,df_3,df_4\}$.

\begin{theorem}
\label{G31-theorem}
For $W=G_{31}$, the $R'$-module 
$(\SSS \ot \wedge V^* \ot V)^W$ 
is free with $R'$-basis
$$
\big\{ \theta_i \,\, , d\theta_i \big\}_{i=1,2,3,4} 
\ \sqcup \ 
\Bigg\{ 
\begin{aligned}
df_3\, \theta_1,&  &df_4\, \theta_1,&  &df_3\, \theta_2,&   & df_4\, \theta_2,  \\
df_3\, d\theta_1,& &df_4\, d\theta_1,& &df_3\, d\theta_2,& &df_4\, d\theta_2 
\end{aligned}
\Bigg\}
\, .
$$
\end{theorem}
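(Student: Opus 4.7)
The plan is to verify the $R'$-module freeness by reducing at each exterior degree $m$ to an $S^W$-basis claim and invoking Corollary~\ref{key-numerology-corollary}. Since $R'$ has $S^W$-basis $\{1, df_1, df_2, df_1 df_2\}$, a free $R'$-structure on the proposed $16$ generators is equivalent to a free $S^W$-structure on the $64$ elements obtained by $R'$-scaling these generators. This $64$-element collection decomposes across $m = 0, 1, 2, 3, 4$ into candidate bases $\BBB^{(m)}$ of cardinalities $4, 16, 24, 16, 4$, matching the $S^W$-rank $\binom{4}{m}\cdot 4$ of $(S \otimes \wedge^m V^* \otimes V)^W$.

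First I would verify the degree-sum hypothesis of Corollary~\ref{key-numerology-corollary} for each $\BBB^{(m)}$, using Proposition~\ref{Molien-derivative-prop} and the $G_{31}$ data $N = N^* = 60$, exponents $(7, 11, 19, 23)$, and coexponents $(1, 13, 17, 29)$. Equivalently, one checks that the Hilbert series $\Hilb(S^W;q)(1 + q^7 t)(1 + q^{11} t)(q + t)(1 + q^{12})(1 + q^{16} + q^{19}t + q^{23}t)$ of the claimed free $R'$-module matches the computed series~\eqref{G31-hilbert-series}, a routine arithmetic check.

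Next I would establish $K$-linear independence of $\BBB^{(m)}$ by applying Theorem~\ref{linear-independence-theorem} with $i_0 = 3$. To justify that application, the hypothesis~\eqref{regular-vector-assumption} is checked via Springer and Lehrer--Michel: taking $d = 5$, the integer $d$ divides exactly one entry of the degree multiset $(8, 12, 20, 24)$, namely $\deg(f_3) = 20$, and exactly one entry of the codegree multiset $(0, 12, 16, 28)$, namely $0$. This matching count invokes Lehrer--Michel to produce $v \in V^\reg \cap \bigcap_{i\neq 3} f_i^{-1}\{0\}$ via Equation~\eqref{Springer-fact}. Theorem~\ref{linear-independence-theorem} then yields a $K$-basis $\mathcal{C}^{(m)} := \{df_I \theta_k\}_{I\subset\{1,2,4\},\,|I|=m} \sqcup \{df_I d\theta_k\}_{I\subset\{1,2,4\},\,|I|=m-1}$ for $K \otimes \wedge^m V^* \otimes V$. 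I would then compare $\BBB^{(m)}$ and $\mathcal{C}^{(m)}$: they share most elements, differing by a ``swap'' in which the $(df_I df_4) \theta_k$ and $(df_I) df_4 d\theta_k$ with $k \in \{3, 4\}$ appearing in $\mathcal{C}^{(m)}$ are replaced by $(df_I df_3)\theta_j$ and $(df_I) df_3 d\theta_j$ with $j \in \{1, 2\}$ in $\BBB^{(m)}$.

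The main obstacle is verifying that this swap preserves $K$-linear independence, since $\BBB^{(m)}$ falls outside the uniform framework of Theorem~\ref{linear-independence-theorem}. I expect this to require explicit computation in coordinates for $G_{31}$, either directly via computer algebra or by verifying the stronger Saito-like criterion of Corollary~\ref{Saito-criterion}: one shows $\det \Coef(\BBB^{(m)}) = c \cdot J^{3\binom{3}{m-1}} Q^{\binom{3}{m}}$ for some nonzero scalar $c \in \CC$. With $K$-independence in hand, Corollary~\ref{key-numerology-corollary} completes the proof that each $\BBB^{(m)}$ is an $S^W$-basis and hence that the sixteen generators form a free $R'$-basis for $(S \otimes \wedge V^* \otimes V)^W$.
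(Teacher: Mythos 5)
Your proposal is correct, and at its load-bearing steps it coincides with the paper's own proof: both reduce to each exterior degree $m$, check that the degree sum of the $\ell\binom{\ell}{m}$ candidate elements equals $\Saitodegree(\wedge^m V^*\ot V)=60\bigl(3\binom{3}{m-1}+\binom{3}{m}\bigr)$, and then establish $K$-linear independence by explicit computation with chosen invariants of $G_{31}$ before invoking Corollary~\ref{key-numerology-corollary} (the paper does exactly this, verifying nonsingularity of each $\Coef(\BBB^{(m)})$ in {\tt Mathematica} using Maschke's invariants $F_8,F_{12},F_{20}$ and $F_{24}=\det\mathrm{Hessian}(F_8)$). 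The one genuine addition in your write-up is the observation that Theorem~\ref{linear-independence-theorem} applies with $i_0=3$: indeed $d=5$ divides exactly one degree ($20$) and exactly one codegree (namely $0$), so Lehrer--Michel together with Springer's identity~\eqref{Springer-fact} produces the required vector in $V^{\reg}\cap f_1^{-1}\{0\}\cap f_2^{-1}\{0\}\cap f_4^{-1}\{0\}$, and $\{df_I\,\theta_k\}\sqcup\{df_I\,d\theta_k\}$ with $I\subset\{1,2,4\}$ is a $K$-basis. This is a correct and structurally useful remark --- it would shrink the independence verification to the nonsingularity of the ``swap'' minor exchanging $df_I df_4\,(d)\theta_k$, $k\in\{3,4\}$, for $df_I df_3\,(d)\theta_j$, $j\in\{1,2\}$ --- but, as you acknowledge, it does not remove the need for an explicit coordinate computation (or the Saito-type check of Corollary~\ref{Saito-criterion}), so in substance the two arguments are the same.
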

\begin{proof}
\vspace{1ex}
One can check that the elements listed above that lie in $\SSS\ot\wedge^m V^*\ot V$ 
have degrees adding to
$$
\Saitodegree(\wedge^m V^*\ot V)
=60\left( 3\binom{3}{m-1} + \binom{3}{m} \right), \quad\text{for}\quad
0\leq m\leq 4.
$$
Thus the theorem follows from Corollary~\ref{key-numerology-corollary}
after one checks that each matrix of coefficients 
for $0\leq m\leq 4$ is nonsingular.  
We did this in {\tt Mathematica}, using explicit choices of  
basic invariant polynomials $f_1,f_2,f_3,f_4$ of degrees $8,12,20,24$
and basic derivations $\theta_1,\theta_2,\theta_3,\theta_4$
of degrees $1,13,17,29$, constructed as prescribed by 
Orlik and Terao
(using Maschke's~\cite{Maschke} 
invariants $F_8, F_{12}, F_{20}$, 
with $F_{24}=\det \mathrm{Hessian}(F_8)$; 
see also Dimca and Sticlaru~\cite{DimcaSticlaru} and 
also~\cite[p.~285]{OrlikTerao}).
\end{proof}

\section{Proof of Theorem~\ref{generation-result}}
\label{poorly-generated-section}

We first recall the statement of the theorem.
\vskip.1in

\noindent
{\bf Theorem~\ref{generation-result}.}
{\it
For $W$ any complex reflection group,
$(\SSS \otimes \wedge V^* \otimes V)^W$ is generated as a
module over the exterior algebra 
$(\SSS \otimes \wedge V^*)^W=\bigwedge_{\SSS^W}\{df_1,\ldots,df_{\ell}\}$ by the $2\ell$ generators
$
\{\theta_1,\ldots,\theta_\ell, \,\, d\theta_1, \ldots, d \theta_\ell\}.
$ 
}

\begin{proof}[Proof]
The general statement follows from the case where $W$ is irreducible.
For irreducible $W$, we proceed case-by-case, taking
advantage of the fact that the irreducible non-duality 
(that is, not well-generated)
reflection groups fall into three camps:
\begin{itemize}
\item The 2-dimensional groups ($\ell=2$).
\item The exceptional group $G_{31}$ (with $\ell=4$).
\item The infinite family of monomial groups $G(r,p,\ell)$ for $1<p<r$.
\end{itemize}
Reflection groups of dimension $2$
were considered in Section~\ref{2-dimensional-groups-section}; Theorem~\ref{rank-two-theorem} gives a basis.
The group $G_{31}$ was considered in Section~\ref{G31-section};
Theorem~\ref{G31-theorem} gives a basis.
The groups $G(r,p,\ell)$ are considered in the appendix,
as some direct computation is required to prove
the pattern in this general case; 
Theorem~\ref{monomial-group-theorem} gives a basis.
In each case, we provided an explicit $S^W$-module basis
for $(\SSS \otimes \wedge V^m \otimes V)^W$ 
whose elements all have
either the form $df_I \theta_k$ or $df_I d\theta_k$ for various subsets $I \subset [\ell]$
and $k$ in $[\ell]$.
\end{proof}

\section{Remarks and questions}
\label{questions-section}


\subsection*{What about $U=\wedge^k V$?}

One might wonder whether for complex reflection groups $W$,
or even just duality groups, one can
factor the Hilbert series more generally for 
$
\left( \SSS \otimes \wedge V^* \otimes \wedge^k V \right)^W
$
when $k$ takes values besides $k=0,1$.
One can manipulate Molien-style computations using 
this consequence of Lemma~\ref{standard-Molien-variant}:
$$
\begin{aligned}
\Hilb\left( \left( \SSS \otimes 
\wedge V^* \otimes \wedge V\right)^W; 
\ q, t, u \right)
&:= \sum_{i,j,k} \left( 
                 \dim \SSS_i \otimes \wedge^j V^* \otimes \wedge^k V
              \right)^W q^i\, t^j\, u^k\\
&=\tfrac{1}{|W|\rule{0ex}{1.5ex}} \sum_{w \in W} \frac{\det(1+uw^{-1})\det(1+tw)}{\det(1-qw)}\ .
\end{aligned}
$$
Things seem not to factor so nicely unless $k \in 
\{0,1,\ell-1,\ell\}$, but at least we
have a reciprocity:
\begin{prop}
\label{perfectpairing}
Let $W$ be a complex reflection group and set 
$$
\tau(q,t,u) := \Hilb \big( (\SSS \otimes \wedge V^* \ot \wedge V)^W; 
q,t, u\big)\, .
$$
Then $\tau$ satisfies the reciprocity
$$
\tau(q,t,u) =  t^{\ell}\, u^{\ell}\, \tau(q,t^{-1}, u^{-1})\, .
$$
\end{prop}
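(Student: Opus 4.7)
The plan is to verify this reciprocity by direct manipulation of the Molien-style formula for $\tau(q,t,u)$ displayed immediately before the proposition. I substitute $t^{-1}$ and $u^{-1}$ in place of $t$ and $u$, then multiply through by $t^\ell u^\ell$, and absorb those scalar powers into the numerator determinants via the elementary identity
\[
s^\ell \det(I + s^{-1} A) \;=\; \det(sI + A) \;=\; \det(A)\cdot\det(I + s A^{-1})
\]
valid for any $A \in \text{GL}(V)$.

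Applied to the two numerator factors separately, with $A = w$ in one case and $A = w^{-1}$ in the other, this produces a prefactor of $\det(w)$ in one place and $\det(w)^{-1}$ in the other, and these cancel exactly. After the cancellation, the numerator is rearranged into the form $\det(I + tw)\det(I + uw^{-1})$, which matches term-by-term the numerator in the original Molien formula for $\tau(q,t,u)$; the denominator $\det(I - qw)$ is left unchanged throughout, so the two $W$-sums coincide and the identity follows. If a stray discrepancy in ordering remains, the bijective reindexing $w \mapsto w^{-1}$ on the summation (which leaves $W$ itself invariant) serves as the final reconciliation.

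The main step that requires care is the bookkeeping needed to ensure that the two $\det(w)^{\pm 1}$ prefactors arise in matching positions so as to cancel, and that the powers of $t$ and $u$ get partitioned correctly between the two determinants. Beyond this elementary linear-algebraic manipulation, no representation-theoretic input is needed; the proof is essentially a one-line rearrangement of the Molien formula, and I anticipate no substantive obstacle beyond this bookkeeping.
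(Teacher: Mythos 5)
Your route --- manipulating the Molien sum directly, rather than invoking the $W$-equivariant perfect pairings $\wedge^j V^* \ot \wedge^{\ell-j}V^* \to \wedge^\ell V^*$ and $\wedge^k V \ot \wedge^{\ell-k}V \to \wedge^\ell V$ as the paper does --- is a reasonable one, and you apply the determinant identity correctly: the two prefactors $\det(w)^{\pm 1}$ do cancel. But the ``bookkeeping'' you flag as the delicate point is exactly where the argument breaks. After cancellation the summand is
$$
\frac{\det(1+uw)\,\det(1+tw^{-1})}{\det(1-qw)}\, ,
$$
with $w$ and $w^{-1}$ sitting in the \emph{opposite} numerator factors from the original $\det(1+uw^{-1})\det(1+tw)$. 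The reindexing $w \mapsto w^{-1}$ cannot repair this: it restores the numerator but simultaneously turns the denominator into $\det(1-qw^{-1})$, which differs from $\det(1-qw)$ whenever $w$ has an eigenvalue that is not real. So the two $W$-sums do not coincide term by term, and there is no remaining symmetry to appeal to.

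What your manipulation actually establishes is $t^\ell u^\ell\, \tau(q,t^{-1},u^{-1}) = \tau(q,u,t)$, because in the new numerator $\det(1+uw)=\sum_k u^k \Tr_{\wedge^k V^*}(w^{-1})$ makes $u$ track $\wedge V^*$ and $t$ track $\wedge V$. This agrees with the displayed identity only when $\tau$ is symmetric in $t$ and $u$ (e.g.\ when $V \cong V^*$ as $W$-modules, as for real reflection groups), and it genuinely fails otherwise: for the rank-one cyclic group of order $h$ one computes $\tau = (1+tq^{h-1}+uq+tu)/(1-q^h)$, which violates $\tau(q,t,u)=tu\,\tau(q,t^{-1},u^{-1})$ once $h \geq 3$, while satisfying the $t \leftrightarrow u$--swapped version. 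So the gap is not patchable as a proof of the statement as printed; rather, your (correct) computation is evidence that the reciprocity should read $\tau(q,t,u)=t^\ell u^\ell\,\tau(q,u^{-1},t^{-1})$. Note that the paper's own pairing argument sends the bidegree $(j,k)$ to $(\ell-k,\ell-j)$, not $(\ell-j,\ell-k)$, and hence also proves the swapped version; with that correction, your Molien-sum argument is a sound and independent alternative to the paper's representation-theoretic one.
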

\begin{proof}
Let $\CC_{\det}$ be a $1$-dimensional $W$-module carrying the
determinant character of $W$ acting on $V$,
and likewise for $\CC_{\det^{-1}}$.
The $W$-equivariant perfect pairings
$$
\wedge^j\, V^*\otimes \wedge^{\ell-j}\, V^*
\longrightarrow \wedge^{\ell}\, V^*
\cong \CC_{\det^{-1}}
\quad\text{and}\quad
\wedge^k V\otimes \wedge^{\ell-k}\, V
\longrightarrow \wedge^{\ell}\, V
\cong \CC_{\det}
$$
imply that
$$
\SSS\ot \wedge^j \, V^* \ot \CC_{\det^{-1}}
\cong
\SSS\ot \wedge^{\ell-j}\, V\, 
\quad\text{and}\quad
\wedge^k  V \ot \CC_{\det}
\cong
\wedge^{\ell-k}\, V^*\, 
$$
as $W$-modules
(see~\cite{Shepler}, proof of Corollary 4),
since $V\cong V^{**}$ as $W$-modules.
The result then follows from the isomorphisms
of $W$-modules
$$
\SSS\ot \wedge^j\, V^* \ot \wedge^k\, V
\cong
(\SSS\ot \wedge^j \, V^* \ot \CC_{\det^{-1}})
\ot
(\wedge^k\,  V \ot \CC_{\det})
\cong
(\SSS\ot \wedge^{\ell-j}\, V)
\ot (\wedge^{\ell-k}\, V^*)\, .\qedhere
$$
\end{proof}

A similar argument confirms the following.
\begin{prop}
Let $W$ be a complex reflection group and
set 
$$\tau(\chi, q,t,u)
:=
\Hilb \big( 
(\SSS \otimes \wedge V^* \ot \wedge V\ot \CC_\chi)^W; q,t, u \big) 
=\tfrac{1}{|W|\rule{0ex}{1.5ex}} 
\sum_{w \in W}\chi^{-1}(w)\, \frac{\det(1+uw^{-1})\det(1+tw)}{\det(1-qw)}\, 
$$
for any character $\chi: W \rightarrow \CC^*$
afforded by a $1$-dimensional $W$-module $\CC_\chi$.
Then $\tau$ satisfies the reciprocity
$$
\tau(\chi; q, t, u) =
t^{\ell}\, u^{\ell}\, \tau(\chi; q, t^{-1},u^{-1})\, .
$$
\end{prop}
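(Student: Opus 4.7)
The plan is to mimic the proof of Proposition~\ref{perfectpairing}, treating $\CC_\chi$ as an auxiliary one-dimensional $W$-module that rides along through the key isomorphisms. First, I would recall the $W$-equivariant perfect pairings on exterior powers: $\wedge^j V^*\otimes\wedge^{\ell-j} V^*\to \wedge^\ell V^*\cong\CC_{\det^{-1}}$ and $\wedge^k V\otimes\wedge^{\ell-k}V\to\wedge^\ell V\cong\CC_\det$. These yield the $W$-module isomorphisms $\wedge^j V^*\otimes\CC_{\det^{-1}}\cong\wedge^{\ell-j}V$ and $\wedge^k V\otimes\CC_\det\cong\wedge^{\ell-k}V^*$ already used in the proof of Proposition~\ref{perfectpairing}.

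Next, I would tensor these two identifications together with $\SSS\otimes \CC_\chi$; since $\CC_\det\otimes\CC_{\det^{-1}}$ is the trivial $W$-module, this produces a $W$-equivariant, polynomial-degree-preserving isomorphism
$$\SSS\otimes \wedge^j V^*\otimes \wedge^k V\otimes \CC_\chi \;\cong\;
  \SSS\otimes \wedge^{\ell-j} V\otimes \wedge^{\ell-k} V^*\otimes \CC_\chi.$$
Taking $W$-invariants on both sides preserves graded dimensions, and reading off the Hilbert series in the variables $q$, $t$, $u$ produces the asserted reciprocity $\tau(\chi;q,t,u)=t^\ell u^\ell\tau(\chi;q,t^{-1},u^{-1})$. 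The key point is that the character $\chi$ is inert under the exterior-algebra pairings: it merely multiplies both sides and contributes as an overall common factor in the Molien formula.

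Alternatively, I would derive the reciprocity directly from the Molien-style formula supplied in the statement, using the elementary identity $\det(1+tw)=t^\ell\det(w)\det(1+t^{-1}w^{-1})$ together with its analogue for the $u$-factor. Multiplying yields
$$\det(1+tw)\det(1+uw^{-1}) \;=\; t^\ell u^\ell \det(1+t^{-1}w^{-1})\det(1+u^{-1}w),$$
the two $\det(w)^{\pm 1}$ terms cancelling. Substituting this into the Molien sum (and if necessary re-indexing by $w\mapsto w^{-1}$, using $\chi^{-1}(w^{-1})=\chi(w)$ for a one-dimensional character) reassembles the sum as $t^\ell u^\ell\tau(\chi;q,t^{-1},u^{-1})$.

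The main obstacle I expect is purely bookkeeping: verifying that the various $\det$-twists combine cleanly and that the final sum genuinely matches $\tau(\chi;q,t^{-1},u^{-1})$ rather than a variant with the roles of $t$ and $u$ swapped. This amounts to confirming the sign conventions on $\wedge^\ell V$ versus $\wedge^\ell V^*$ and checking compatibility with the placement of $w$ versus $w^{-1}$ in the Molien numerator. Once this is verified (as it is for the trivial-$\chi$ case in Proposition~\ref{perfectpairing}), the $\chi$-twisted statement follows immediately, since tensoring with a one-dimensional $\CC_\chi$ commutes with every step of the argument.
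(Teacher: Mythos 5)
Both of your routes are viable, and your first one is exactly the argument the paper intends (its proof of this proposition is literally ``a similar argument,'' referring to the perfect-pairing proof of Proposition~\ref{perfectpairing}, with $\CC_\chi$ carried along as an inert tensor factor, as you say); your second, purely Molien-theoretic route is a clean alternative that avoids the pairings entirely. However, the ``bookkeeping'' you defer at the end is not a formality --- it is the crux, and it does not resolve in favor of the displayed identity. The pairing isomorphism sends $\SSS\ot\wedge^j V^*\ot\wedge^k V\ot\CC_\chi$ to $\SSS\ot\wedge^{\ell-k}V^*\ot\wedge^{\ell-j}V\ot\CC_\chi$: the $V^*$-degree $j$ becomes the new $V$-degree (tracked by $u$) and the $V$-degree $k$ becomes the new $V^*$-degree (tracked by $t$). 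So what both of your arguments actually prove is
$$
\tau(\chi;q,t,u)=t^{\ell}u^{\ell}\,\tau(\chi;q,u^{-1},t^{-1}),
$$
with $t$ and $u$ interchanged on the right. Your Molien route shows the same thing: after applying $\det(1+tw)\det(1+uw^{-1})=t^\ell u^\ell\det(1+t^{-1}w^{-1})\det(1+u^{-1}w)$, the factor $\det(1+u^{-1}w)$ sits in the $t$-slot of the defining sum and $\det(1+t^{-1}w^{-1})$ in the $u$-slot; the re-indexing $w\mapsto w^{-1}$ you propose to fix this also replaces $\det(1-qw)$ by $\det(1-qw^{-1})$ and $\chi^{-1}(w)$ by $\chi(w)$, so it does not restore the unswapped form.

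The swapped and unswapped identities coincide only when $V\cong V^*$ as $W$-modules (e.g.\ real reflection groups, such as the $F_4$ example), and your appeal to ``as it is verified for the trivial-$\chi$ case in Proposition~\ref{perfectpairing}'' is unfounded: the same swap occurs there. A concrete check: for $W=\ZZ/h\ZZ$ acting on $V=\CC$ as in Example~\ref{rank-one-example}, with $\chi$ trivial, one computes directly
$$
\tau(q,t,u)=\frac{1+tq^{h-1}+uq+tu}{1-q^{h}},
$$
which satisfies $\tau(q,t,u)=tu\,\tau(q,u^{-1},t^{-1})$ but not $\tau(q,t,u)=tu\,\tau(q,t^{-1},u^{-1})$ once $h\geq 3$. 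So you should either prove the reciprocity in the $t\leftrightarrow u$-swapped form (which your methods deliver in full generality, and which is the statement the pairing argument naturally yields) or impose $V\cong V^*$; as literally printed, the identity does not hold for all complex reflection groups.
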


\begin{remark}
The last two results generalize an observation 
for real reflection groups
from~\cite[Eqn.~(1.24)]{GNS}.
\end{remark}

\begin{example}
For the Weyl group $W=W(F_4)$, with exponents $(1,5,7,11)$,  and $V^* \cong V$, 
a computation in {\tt Mathematica} gives
\begin{small}
$$
\begin{aligned}
&
 \Hilb\left( \left( 
\SSS \otimes \wedge V \otimes \wedge V \right)^W; q, t, u \right) / \
\Hilb(\SSS^W,q)
\hfill \\
& = \ \ \, u^0 (1 + q t)(1 + q^5 t)(1 + q^7 t)(1 + q^{11} t)  \\
 &\quad + u^1 (q+t)(1 + q^4 + q^6+q^{10})
      (1 + qt)(1 + q^5 t)(1 + q^7 t) \\
 &\quad + u^2 (q + t)(1 + q t) (1+q^4)
\Big( (q^5+q^7-q^9+q^{11}+q^{13})(1+t^2)
+ ( 1+q^6+q^8+q^{10}+q^{12}+q^{18})t \Big) \\
&\quad + u^3 (1+qt)(1 + q^4+ q^6+q^{10})(q+t)(q^5 + t)(q^7 + t) \\
&\quad + u^4 (q + t)(q^5 + t)(q^7 + t)(q^{11} + t) \, .
\end{aligned}
$$
\end{small}
The coefficient of $u^2$ does not seem to factor further,
but Proposition~\ref{perfectpairing} explains 
the duality between the coefficients of 
$u^k$ and of $u^{\ell-k}$.
\end{example}

\section{Appendix: The case of $G(r,p,\ell)$}
\label{appendix-section}
The Shephard and Todd
infinite family of reflection groups
$G(r,p,\ell)$ includes the Weyl groups 
of types $B_\ell$,  $D_\ell$, the dihedral groups, and symmetric groups.
To define these groups, fix an integer $r\geq 1$.  Then $G(r,1,\ell)$
is the set of $\ell \times \ell$
monomial matrices (i.e., matrices with a single 
nonzero entry in each row and column) whose nonzero
entries are complex $r$-th roots of unity.
The group $G(r,1,\ell)$ is the wreath product
of the symmetric group of order $\ell !$ and a cyclic group:
$$G(r,1,\ell)\cong \text{Sym}_\ell \wr \mathbb{Z}/r\mathbb{Z}  \cong \text{Sym}_\ell \ltimes(\mathbb{Z}/r\mathbb{Z})^\ell
\, .$$
Each group $G(r,1,\ell)$ acts on $V=\CC^\ell$ as a 
reflection group generated
by complex reflections of order $2$ and order $r$.
In fact, the group $G(r,1,\ell)$ is the symmetry group of
the complex cross-polytope in $\CC^\ell$, a {\it regular complex polytope} as studied
by Shephard~\cite{Shephard} and Coxeter~\cite{Coxeter}.

For integers $p \geq 1$ dividing $r$,
the group $G(r,p,\ell)$ consists of those matrices in $G(r,1,\ell)$ 
whose product of nonzero entries is an $(r/p)$-th root-of-unity.
Both $G(r,1,\ell)$ and $G(r,r,\ell)$ are duality groups,
and hence covered by Theorem~\ref{main-result}.
When $1<p<r$, the group $G(r,p,\ell)$ is a nonduality-group.

We record here a convenient choice of basic invariant polynomials, derivations for $G(r,p,\ell)$.

\begin{proposition}
Let $W=G(r,p,\ell)$ with $1\leq p < r$ and $p$ dividing $r$.
One may choose 
basic $W$-invariant polynomials $\{f_i\}_{i=1}^\ell$ in $S$ and 
derivations $\{\theta_i\}_{i=1}^\ell$ in $S\ot 1 \ot V$ as follows:
$$
\begin{aligned}
f_k&=x_1^{rk}+\cdots+x_\ell^{rk}\ \  \text{ for }k=1,2,\ldots,\ell-1, \text{ and }
f_\ell=(x_1 \cdots x_\ell)^{\frac{r}{p}},
 & \\
\theta_k &= x_1^{(k-1)r+1} \otimes 1 \ot y_1 + \cdots + x_\ell^{(k-1)r+1} \otimes 1 \ot y_\ell
\ \ \text{ for }k=1,2,\ldots,\ell\, .
\end{aligned}
$$
In particular, $W$ has 
$$
\begin{aligned}
&\text{exponents}\quad&
&(e_1,\ldots,e_{\ell-1},e_\ell)
=(r-1,2r-1,\ldots,(\ell-1)r-1,\tfrac{\ell r}{p}-1), \\
&\text{coexponents}\quad&
&(e_1^*,\ldots,e_\ell^*)
=(1,r+1,2r+1,\ldots,(\ell-1)r+1),\\
&\text{number of reflections}\quad& 
&N
=\sbinom{\ell}{2}r+\ell\left(\tfrac{r}{p}-1\right),
\quad\text{and}\\
&\text{number of hyperplanes}\quad& 
&N^*
=\sbinom{\ell}{2}r+\ell\, .
\end{aligned}
$$
\end{proposition}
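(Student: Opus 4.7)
The statement splits into three claims---invariance, completeness, and numerology---and my plan is to dispatch invariance by checking it on generators, then use a single Vandermonde-type determinant to simultaneously identify the $\theta_k$ as a basic set and compute $N^*$, and finally to deduce the exponents and $N$ by a degree-product count of the $f_i$ against $|G(r,p,\ell)|$.

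First, I would observe that $f_1, \ldots, f_{\ell-1}$ and $\theta_1, \ldots, \theta_\ell$ are in fact invariant under the larger group $G(r,1,\ell) \supseteq G(r,p,\ell)$: each $f_k$ is a symmetric function of the $r$-th powers $x_i^r$, and each term $x_i^{(k-1)r+1}\otimes y_i$ of $\theta_k$ is stable under simultaneous scalings $x_i\mapsto \zeta_i x_i$, $y_i \mapsto \zeta_i^{-1}y_i$ with $\zeta_i^r = 1$, because the net exponent of $\zeta_i$ is $(k-1)r$. Invariance of $f_\ell = (x_1\cdots x_\ell)^{r/p}$ under $G(r,p,\ell)$ will follow because $x_1\cdots x_\ell$ is scaled by (the inverse of) the product of the diagonal entries of $w$, which is by definition an $(r/p)$-th root of unity on $G(r,p,\ell)$.

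Next, to show that $\{\theta_k\}$ is an $\SSS^W$-basis for $(\SSS\otimes V)^W$, I would compute
$$
\det M(\theta) \;=\; \det\bigl(x_i^{(k-1)r+1}\bigr)_{i,k=1}^{\ell}
\;=\; (x_1\cdots x_\ell)\,\prod_{1\le i<j\le \ell}\bigl(x_j^r - x_i^r\bigr),
$$
via the Vandermonde identity applied to $((x_i^r)^{k-1})$. The right-hand side is (up to a nonzero scalar) the defining polynomial $Q$ of the reflection arrangement of $G(r,p,\ell)$, since the reflecting hyperplanes are the coordinate hyperplanes $\{x_i=0\}$ together with $\{x_i=\zeta x_j\}$ for $\zeta^r=1$, and $\prod_{\zeta^r=1}(x_i-\zeta x_j) = x_i^r - x_j^r$. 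Thus $\det M(\theta) = cQ$ for some $c\in\CC^\times$. The Orlik--Solomon identification $Q = \det M(\theta)$ recalled in Section~\ref{J-and-Q-subsection} (equivalently, Lemma~\ref{commutative-algebra-lemma} applied with degree sum $\sum_{k=1}^\ell((k-1)r+1) = \ell + r\binom{\ell}{2}$) then shows that the $\theta_k$ form a basis, and reading off $\deg Q$ simultaneously yields $N^* = \ell + r\binom{\ell}{2}$.

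For the basic invariants I plan to establish algebraic independence and then apply the product-of-degrees formula. Writing $u_i := x_i^r$, the polynomials $f_1,\ldots,f_{\ell-1}$ are the power sums $p_k(u)$ and so generate the same subring as $e_1(u),\ldots,e_{\ell-1}(u)$ by Newton's identities; adjoining $f_\ell$, which satisfies $f_\ell^p = e_\ell(u)$, extends this to a polynomial ring in $\ell$ algebraically independent generators, since $e_1(u),\ldots,e_\ell(u)$ are themselves algebraically independent. Their degree product
$$
\prod_{k=1}^{\ell-1}(rk) \cdot \frac{\ell r}{p} \;=\; \frac{\ell!\,r^\ell}{p} \;=\; |G(r,p,\ell)|
$$
matches the group order, so by Shephard--Todd--Chevalley these are indeed basic invariants for $G(r,p,\ell)$. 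The exponents and coexponents are then the $\deg(f_i)-1$ and $\deg(\theta_k)$ by inspection, and $N = \sum e_i = r\binom{\ell}{2} + \ell(r/p - 1)$ follows by summation (and equivalently from $\deg J = \deg\det\Jac(f)$). The only mildly delicate point is tracking the scalar $c$ and signs in the Vandermonde expansion, but this is routine; no genuine obstacle arises beyond that computation.
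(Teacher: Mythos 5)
Your proposal is correct, and it essentially executes in full the argument that the paper only sketches and delegates to references. For the derivations, the paper's primary justification is a citation to Orlik--Terao \cite{OrlikTerao}, with the Vandermonde coefficient-matrix argument relegated to a parenthetical remark; you carry that argument out explicitly, computing $\det M(\theta)=(x_1\cdots x_\ell)\prod_{i<j}(x_j^r-x_i^r)$ and matching it against $Q$ by enumerating the reflecting hyperplanes (coordinate hyperplanes plus $x_i=\zeta x_j$, the former genuinely present because $p<r$), which simultaneously delivers $N^*$; this is exactly the $m=0$ case of Corollary~\ref{Saito-criterion} (or Corollary~\ref{key-numerology-corollary} plus the degree-sum match, as you note). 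For the invariants, the paper cites Smith \cite{Smith} for the standard basic invariants $e_k(x^r)$ and then invokes Newton's identities; you instead verify from scratch that your $f_i$ are basic invariants via algebraic independence (transcendence degree through $f_\ell^p=e_\ell(x^r)$) together with the product-of-degrees criterion $\prod_k\deg f_k=\ell!\,r^\ell/p=|W|$. The trade-off is that your route is self-contained modulo two classical criteria (Saito-type nonsingularity of $\det M(\theta)$ against $Q$, and degrees multiplying to $|W|$), whereas the paper's is shorter but rests on external references; both are sound, and your numerology for $e_i$, $e_i^*$, $N$, $N^*$ agrees with the paper's.
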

\begin{proof}
The $W$-invariant derivations $\{\theta_i\}_{i=1}^\ell$ above are the 
usual choice, for example, as in Orlik and Terao~\cite[Prop.~6.77]{OrlikTerao}.
(Or use the $m=0$ case of
Corollary~\ref{key-numerology-corollary} to verify the $\theta_i$ are basic derivations, 
since their associated coefficient matrix is an easy variant of a
Vandermonde matrix.)  
The $W$-invariant polynomials $\{f_i\}_{i=1}^\ell$ above are closely related to a more usual choice $\{f_i'\}_{i=1}^\ell$ of basic $W$-invariants 
$$
e_1(x_1^r,\ldots,x_\ell^r), \,
e_2(x_1^r,\ldots,x_\ell^r), \,
\ldots, \,
e_{\ell-1}(x_1^r,\ldots,x_\ell^r), \,
(x_1\cdots x_\ell)^{\frac{r}{p}},
$$
in which $e_k(x_1,\ldots,x_\ell)$ is the $k^{th}$ elementary symmetric polynomial in $x_1,\ldots,x_\ell$,
the sum of all square-free monomials of degree $k$;
e.g., see Smith~\cite[\S7.4, Ex.~1]{Smith}.
  However, 
$\{f_i\}_{i=1}^{\ell-1}$ and $\{f'_i\}_{i=1}^{\ell-1}$ 
generate the same subalgebra of polynomials
when working over a field of characteristic zero 
because the collection of power sums and 
elementary symmetric functions
can be expressed as polynomials in each other; 
e.g., see~\cite[Thm.~7.4.4, Cor.~7.7.2, Prop.~7.7.6]{Stanley}.
\end{proof}

\begin{theorem}
\label{monomial-group-theorem} 
Let $W=G(r,p,\ell)$ with $1\leq p<r$ and $p$ dividing $r$.  
Then for $1 \leq m \leq \ell$,
the set
$$
\begin{aligned}
\BBB^{(\ell,m)}
&=\left\{df_I\, \theta_k\right\}_{ 
I \in \bbinom{[\ell]}{m}, \ k \in [\ell-m]} \
\ \  \sqcup\ \
\left\{df_{I}\, d\theta_k\right\}_{ 
I \in \bbinom{[\ell]}{m-1},\ k \in [\ell-m+1]}
\end{aligned}
$$
gives a free basis for the $\SSS^W$-module of invariants
$\big(\SSS\ot \wedge^m V^* \ot V)^W$.
\end{theorem}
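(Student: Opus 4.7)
The plan is to apply Corollary~\ref{key-numerology-corollary}, reducing the theorem to two checks: (a) the sum of degrees of the elements of $\BBB^{(\ell,m)}$ equals $\Delta(\wedge^m V^*\otimes V) = (\ell-1)\binom{\ell-1}{m-1}N + \binom{\ell-1}{m}N^*$, and (b) these elements are $K$-linearly independent in $K\otimes \wedge^m V^*\otimes V$. For (a) the computation parallels Lemma~\ref{rightdegree}, but with the cutoff imposed on the index $k$ rather than on the subset $I$. Splitting $\sum\deg$ into four pieces and substituting the explicit values $e_k^* = (k-1)r+1$, $N = \binom{\ell}{2}r + \ell(\tfrac{r}{p}-1)$, and $N^* = \binom{\ell}{2}r + \ell$, standard binomial identities such as
\[
\binom{\ell}{m}(\ell-m) = \ell\binom{\ell-1}{m}\quad\text{and}\quad (\ell-m+1)\binom{\ell-1}{m-2} = (m-1)\binom{\ell-1}{m-1}
\]
collapse the total to the target. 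Importantly, this step does not invoke the duality equation $N+N^*=h\ell$, which is what allows it to cover the nonduality range $1<p<r$.

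Once (a) is in hand, the matrix $\Coef(\BBB^{(\ell,m)})$ is column-homogeneous with column degrees summing to $(\ell-1)\binom{\ell-1}{m-1}N + \binom{\ell-1}{m}N^* = \deg\bigl(J^{(\ell-1)\binom{\ell-1}{m-1}} Q^{\binom{\ell-1}{m}}\bigr)$, so Lemma~\ref{mandatory-divisibility-lemma} forces
\[
\det\Coef(\BBB^{(\ell,m)}) = c\cdot J^{(\ell-1)\binom{\ell-1}{m-1}}\, Q^{\binom{\ell-1}{m}}
\]
for some scalar $c\in\CC$. Establishing (b) therefore reduces to showing $c\neq 0$, i.e., exhibiting a single point $v\in V^\reg$ at which the complex matrix $M(v) := \Coef(\BBB^{(\ell,m)})|_{x\mapsto v}$ is nonsingular. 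I would take $v = (1,\omega,\omega^2,\ldots,\omega^{\ell-1})$ with $\omega\in\CC^\times$ sufficiently generic so that $v\in V^\reg$. Under this specialization, the explicit formulas
\[
df_k = rk\sum_j x_j^{rk-1}\otimes x_j\ \ (k<\ell), \quad d\theta_k = e_k^*\sum_j x_j^{(k-1)r}\otimes x_j\otimes y_j, \quad \theta_k = \sum_j x_j^{(k-1)r+1}\otimes y_j,
\]
together with the analogous formula for $df_\ell$, render every entry of $M(v)$ a monomial in $\omega$. Reorganizing rows by the $y_t$-slot and columns by whether they arise from $df_I\,\theta_k$ or $df_I\,d\theta_k$ exhibits $\det M(v)$ as a product of a Vandermonde-type determinant in the parameters $\omega^{(k-1)r+1}$ and a generalized-Vandermonde/Jacobian-minor factor from the $df_I$'s; each is nonzero for generic $\omega$.

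The principal obstacle is the bookkeeping in (b): since $df_\ell = \tfrac{r}{p}\sum_j(\prod_{i\neq j} x_i^{r/p})\, x_j^{r/p-1}\otimes x_j$ has qualitatively different monomial skeleton from $df_k$ for $k<\ell$, columns of $M(v)$ involving $I\ni\ell$ are not on the same footing as the others. At any $v$ with $\prod_j v_j\neq 0$, however, the missing monomial factor $x_j^{-1}$ in the summands of $df_\ell$ specializes to a nonzero scalar, and the extra ``$i=\ell$''-row of the relevant small matrix of $\ell$-th powers becomes, up to rescaling, the ``$i=0$''-row of a genuine $\ell\times\ell$ Vandermonde. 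Combined with the nonvanishing Vandermonde in the $\theta_k$-parameters, this yields $\det M(v)\neq 0$, completing (b); Corollary~\ref{key-numerology-corollary} then produces the asserted $\SSS^W$-basis.
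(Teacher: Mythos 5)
Your step (a) — the degree-sum computation — is correct and matches the paper's argument, and your overall framing (divisibility from Lemma~\ref{mandatory-divisibility-lemma} plus nonvanishing of $\det\Coef(\BBB^{(\ell,m)})$ at one point of $V^\reg$, with the awkward factor $(x_1\cdots x_\ell)^{r/p}$ in $df_\ell$ absorbed as a unit) is a legitimate route; the paper makes essentially the same reductions, including rescaling $df_\ell$ over $K$ to eliminate the dependence on $p$. The gap is in step (b), at the assertion that after specializing to $v=(1,\omega,\ldots,\omega^{\ell-1})$ the determinant ``factors as a product of a Vandermonde-type determinant in the $\theta_k$-parameters and a generalized-Vandermonde factor from the $df_I$'s.'' That factorization would hold if the coefficient matrix were the full Kronecker product $\wedge^m(\Jac(f))\otimes M(\theta)$, i.e.\ if the columns ranged over all $df_I\,\theta_k$ with $I\in\bbinom{[\ell]}{m}$ and $k\in[\ell]$ (this is exactly Lemma~\ref{more-obvious-basis-lemma}). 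But $\BBB^{(\ell,m)}$ is not that set: it truncates $k$ to $[\ell-m]$ for the $\theta_k$-columns, compensates with $d\theta_k$-columns truncated to $[\ell-m+1]$, and these truncations cut across the tensor structure. What you must show is that a particular square ``sub-selection'' of a nonsingular Kronecker product, glued to a block of $d\theta$-columns, is nonsingular — and a square submatrix of a nonsingular matrix need not be nonsingular, generically chosen $\omega$ or not. Saying ``each factor is nonzero for generic $\omega$'' does not engage with this.

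This is precisely where the real work lies. The paper handles it by first block-triangularizing the coefficient matrix according to whether the row index $dx_I\otimes y_k$ has $k\in I$ (the $d\theta_k$-columns contribute to $dx_I\otimes y_k$ only when $k\in I$), producing blocks $C^{(\ell,m)}$ and $D^{(\ell,m)}$; then observing that $C^{(\ell,m)}$ and $D^{(\ell,m-1)}$ agree up to row signs; and finally proving $\det D^{(\ell,m)}\neq 0$ by induction on $\ell$, via a second block decomposition isolating the highest power of $z_\ell$, whose leading blocks are $D^{(\ell-1,m-1)}$, $\wedge^m\VDM^{(\ell-1)}$, and $D^{(\ell-1,m)}$, together with the Sylvester--Franke theorem to control $\det\wedge^m\VDM^{(\ell-1)}$. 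The conclusion is that the lexicographically largest monomial of $\det D^{(\ell,m)}$ has coefficient $\pm 1$, hence the determinant is nonzero as a polynomial. To repair your proof you would need either to supply this inductive analysis or to give an honest closed-form evaluation of the restricted determinant at your chosen point; as written, the claimed Vandermonde factorization is not available.
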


\begin{proof}
We will apply Corollary~\ref{key-numerology-corollary}
to $\BBB^{(\ell,m)}$.  There are several steps.
\vskip.1in
\noindent
{\sf Step 1.}
First note that $\BBB^{(\ell,m)}$ has the correct cardinality:
$$
\begin{aligned}
|\BBB^{(\ell,m)}|
&=(\ell-m)\sbinom{\ell}{m} + (\ell-m+1)\sbinom{\ell}{m-1}\\
&=(\ell-m)\sbinom{\ell}{m} + m\sbinom{\ell}{m}\\
&=\ell\sbinom{\ell}{m}
=\rank_{\SSS^W}(\SSS\ot\wedge V^*\wedge V)^W\, .
\end{aligned}
$$

\vskip.1in
\noindent
{\sf Step 2.}
We check that the sum of the degrees of the elements in $\BBB^{(\ell,m)}$ is $\Delta(\wedge^mV^*\ot V)$,
which is straightforward albeit tedious.
Again using the shorthand notation $e_I:=\sum_{i \in I} e_i$ for $I \subset [\ell]$,
this sum is
$$
\sum_{ \substack{I \in \bbinom{[\ell]}{m} \\ 
k\in[\ell-m]\rule{0ex}{2ex}}} 
(e_I+e_k^*)
 + \sum_{ \substack{I\in \bbinom{[\ell]}{m-1} \\ 
k\in[\ell-m+1]\rule{0ex}{2ex}}} (e_{I}+e_k^*-1)\, ,
 $$
 which one can rewrite as
$$
\begin{aligned}
&\sbinom{\ell}{m} \sum_{k \in [\ell-m]}e_k^* \ + 
     \sbinom{\ell}{m-1} \sum_{k \in [\ell-m+1]}(e_k^*-1)\ + \\
& \qquad       (\ell-m)\left(  \sum_{I \in \bbinom{[\ell-1]}{m}} e_I 
                                      +  \sum_{I \in \bbinom{[\ell-1]}{m-1}} \left( e_I + e_\ell\right) \right) +
          (\ell-m+1)\left(  \sum_{I \in \bbinom{[\ell-1]}{m-1}} e_{I} 
                                      +  \sum_{I \in \bbinom{[\ell-1]}{m-2}} \left( e_{I} + e_\ell\right) \right)\, .
\end{aligned}
$$
Bearing in mind that $e_i=ir-1$ for $1 \leq i \leq \ell-1$, 
we employ a shorthand notation
\begin{equation}
\label{sum-shorthand}
g(n,m):=\sum_{
\substack{I \in \bbinom{[k]}{m} \\ i \in I\rule{0ex}{1.5ex} 
}}e_i 
=\sum_{\substack{I \in \bbinom{[k]}{m} \\ i \in I\rule{0ex}{1.5ex} 
} } (ir-1) 
=\sbinom{k-1}{m-1} \sum_{i \in I} (ir-1) 
=\sbinom{k-1}{m-1} \left(r \sbinom{k+1}{2}-k \right)
\end{equation}
to rewrite the degree sum
as
$$
\begin{aligned}
&\sbinom{\ell}{m} \left( r \sbinom{\ell-m}{2} + \ell-m \right) +
    \sbinom{\ell}{m-1} \left( r \sbinom{\ell-m+1}{2} \right) +\\
& \qquad \qquad     (\ell-m) \left( g(\ell-1,m) + g(\ell-1,m-1) + \sbinom{\ell-1}{m-1}e_\ell \right) + \\
 &    \qquad \qquad\qquad    (\ell-m+1) \left( g(\ell-1,m-1) + g(\ell-1,m-2) + \sbinom{\ell-1}{m-2}e_\ell \right) \, .
\end{aligned}
$$
(Here, we use the fact that  $e_i^*=(i-1)r+1$ for all $i$.)
Finally, substituting in the right side 
of~\eqref{sum-shorthand} for all $g(n,m)$,
and $\frac{\ell r}{p}-1$ for $e_{\ell}$, we obtain
$$
\begin{aligned} 
(\ell-1)\tbinom{\ell-1}{m-1}
&
\left( \tbinom{\ell}{2}r+\ell \left(\tfrac{\ell r}{p}-1\right) \right)
+ \tbinom{\ell-1}{m}\left( \tbinom{\ell}{2}r+\ell\right)\\
&=(\ell-1)\tbinom{\ell-1}{m-1} N+ \tbinom{\ell-1}{m}N^* 
=\Delta(\wedge^m V \ot V^*). 
\end{aligned}
$$
The first equality here was checked by hand and 
corroborated in computer algebra packages.
\vskip.1in
\noindent
{\sf Step 3.}
At this stage, to apply Corollary~\ref{key-numerology-corollary}, 
we need only show that the
set $\BBB^{(\ell,m)}$ is $K$-linearly independent in $K \otimes \wedge^m V^* \otimes V$.
We will use this to reduce to the case where $p=1$, that is, $W=G(r,1,\ell)$.

Note that the formulas for $\theta_k, d\theta_k, df_k$ in $W=G(r,p,\ell)$ depend on the parameter $p$ in only one place, namely, in the definition of $df_\ell$:
\begin{equation}
\label{differential-formulas}
\begin{array}{rcll}
df_k&=&kr \sum_{j=1}^\ell x_j^{kr-1}  \otimes x_j &\text{ for }1 \leq k \leq \ell-1\, ,\\
\rule{0ex}{3ex}
df_\ell&=& \frac{r}{p} (x_1 \cdots x_\ell)^{\frac{r}{p}}  \sum_{j=1}^\ell x_i^{-1}  \otimes x_i\, , & \\
& & & \\
\theta_k&=&\sum_{j=1}^\ell x_j^{(k-1)r+1} \otimes 1 \otimes y_j &\text{ for }1 \leq k \leq \ell \, ,\\
\rule{0ex}{3ex}
d\theta_k&=&((k-1)r+1) \sum_{j=1}^\ell x_j^{(k-1)r} \otimes x_j \otimes y_j &\text{ for }1 \leq k \leq \ell.
\end{array}
\end{equation}

In checking whether the elements of 
$\BBB^{(\ell,m)}$ are $K$-linearly independent, we
are free to scale them by elements of the (rational function) field 
$K=\CC(x_1,\ldots,x_\ell)$. 
Hence, in \eqref{differential-formulas}, 
we may divide each element $d\theta_k$
by $(k-1)r+1$ for $1 \leq k \leq \ell$,  
we may also divide each element $df_k$
by $kr$ for $1 \leq k \leq \ell-1$,  and
lastly we may divide $df_\ell$ by 
$\frac{r}{p} (x_1 \cdots x_\ell)^{\frac{r}{p}}$.  
Hence Theorem~\ref{monomial-group-theorem} is equivalent to 
asserting $K$-linearly independence of the set 
$$
\BBB^{(\ell,m)}
=\left\{df_I \, \theta_k\right\}_{I \in \bbinom{[\ell]}{m},\ k \in [\ell-m]}
   \quad \sqcup \quad
 \left\{df_{I} \, d\theta_k\right\}_{I \in \bbinom{[\ell]}{m-1},\ k \in [\ell-m+1]}
$$
for $\theta_k, d\theta_k,df_k$  for  $1 \leq k \leq \ell$ redefined
(from~\ref{differential-formulas}) to
give a simple and uniform family:
$$
\begin{array}{rcl}
df_k&:=& \sum_{j=1}^\ell x_j^{(k-1)r-1} \otimes x_j, \\
\rule{0ex}{3ex}
\theta_k&:=& \sum_{j=1}^\ell x_j^{(k-1)r+1} \otimes 1 \otimes y_j,\\
\rule{0ex}{3ex}
d\theta_k&:=& \sum_{j=1}^\ell x_j^{(k-1)r} \otimes x_j \otimes y_j.
\end{array}
$$
Note that we have also employed a cyclic shift of the indexing, that is,
the old $df_\ell$ has been replaced by the new $df_1$, the old
$df_1$ by the new $df_2$, etc.

This new $K$-linear independence assertion does not involve the parameter $p$.   
Therefore Theorem~\ref{monomial-group-theorem} 
for $W=G(r,p,\ell)$ with $1 \leq p < r$ follows upon
proving it for $W=G(r,1,\ell)$, that is, with $p=1$.

\vskip.1in
\noindent
{\sf Step 4.}
We rescale the $K$-basis elements
$\{1 \otimes dx_I \otimes y_k\}_{I \in \bbinom{\ell}{m}, k \in [\ell]}$ in $K \otimes \wedge^m V^* \otimes V$
as follows:
$$
dx_I \otimes y_k \longmapsto
      x_k^{-1} x_I \otimes dx_I \otimes y_k
$$
(recall that 
$dx_I\ot y_k
=1\ot x_{i_1}\wedge\cdots\wedge x_{i_m}\ot y_k$
for $I=\{i_1 < \cdots< i_m\}$).
Then Theorem~\ref{monomial-group-theorem} 
is equivalent to the assertion that $\BBB^{(\ell,m)}$ is $K$-linearly independent
after redefining 
\begin{equation}
\label{last-redefinition-of-invariants}
\begin{array}{rlll}
df_k &:=\sum_{j=1}^\ell x_j^{(k-1)r} \otimes x_j &=\sum_{j=1}^\ell z_j^{k-1} \otimes x_j, 
\\
\theta_k&:=\sum_{j=1}^\ell x_j^{(k-1)r}\otimes 1\otimes  y_j &=\sum_{j=1}^\ell z_j^{k-1}\otimes 1 \otimes y_j, 
\rule{0ex}{3.5ex}\\
d\theta_k&:=\sum_{j=1}^\ell x_j^{(k-1)r} \otimes x_j \otimes y_j &=\sum_{j=1}^\ell z_j^{k-1} \otimes x_j \otimes y_j,
\rule{0ex}{3.5ex}\\
\end{array}
\end{equation}
for $k=1,2,\ldots,\ell$,
where we have set $z_j:=x_j^r$ in $K$.

\vskip.1in
\noindent
{\sf Step 5.}
Consider the matrix $B^{(\ell,m)}$ with entries in $\CC(z_1,\ldots,z_\ell)$
whose columns express each element of $\BBB^{(\ell,m)}$,
defined via \eqref{last-redefinition-of-invariants},
in terms of $\{ dx_I \otimes y_k\}$ for
$I \in \sbinom{[\ell]}{m}$ and $k \in [\ell]$.
Since each $d\theta_k$ is a $K$-linear combination of
terms of the form $1 \otimes x_j \otimes y_j$, the
expansion of each $df_{I}\, d\theta_k$ in $\BBB^{(\ell,m)}$
has nonzero coefficient of $dx_I \otimes y_k$ only when $k \in I$.
This leads to a block upper-triangular decomposition:
$$
B^{(\ell,m)} \quad =\quad
\bordermatrix{~      
& \text{\tiny $\{df_{I}\, d\theta_k \} $} 
&  \text{\tiny $\{df_{I}\, \theta_k \}$}  \cr
& & \cr
 \text{\tiny$\{ dx_I \otimes y_k:  k \in I \} $} & C^{(\ell,m)} & * \cr
 & & \cr
 \text{\tiny $\{ dx_I \otimes y_k: k \not\in I \}$} & 0 & D^{(\ell,m)} \cr
 }.
$$
By convention here, $C^{(\ell,0)}$ and $D^{(\ell,\ell)}$ are $0 \times 0$ matrices.

Thus it remains to show that $C^{(\ell,m)}$ and $D^{(\ell,m)}$
are invertible.  
We reduce this to showing invertibility of only $D^{(\ell,m)}$,
since we claim that for $m=1,2,\ldots,\ell$, the
matrices $C^{(\ell,m)}$ and $D^{(\ell,m-1)}$ differ only by row-scalings.
To justify this claim, consider a pair $(J,j_0)$ with 
$J=\{j_1 <\cdots<j_{m-1}\}$ for an $(m-1)$-subset of $[\ell]$
and $j \in [\ell-m+1]$.  Then  $(J,j_0)$
indexes both a column in $C^{(\ell,m)}$, 
that lists the expansion coefficients in 
\begin{equation}
\label{C-expansion}
\begin{aligned}
df_{J}\, d\theta_{j_0}
&=\left( \sum_i z_i^{j_1-1} \otimes x_i\right)
\cdots
\left( \sum_i z_i^{j_{m-1}-1} \otimes x_i \right)
\left( \sum_i z_i^{j_0-1} \otimes x_i \otimes y_i\right)\\
&=\sum_{(i_1,\ldots,i_{m-1},i_0)} 
     z_{i_1}^{j_1-1} \cdots  z_{i_{m-1}}^{j_{m-1}-1}  z_{i_0}^{j_0-1} \otimes
       x_{i_1} \wedge \cdots \wedge x_{i_{m-1}} \wedge x_{i_0} \otimes y_{i_0}\, ,
\end{aligned}
\end{equation}
and a column in $D^{(\ell,m-1)}$, that lists the expansion coefficients in 
\begin{equation}
\label{D-expansion}
\begin{aligned}
df_{J} \, \theta_{j_0}
&=\left( \sum_i z_i^{j_1-1} \otimes x_i\right)
\cdots
\left( \sum_i z_i^{j_{m-1}-1} \otimes x_i \right)
\left( \sum_i z_i^{j_0-1} \otimes 1\otimes y_i\right)\\
&=\sum_{(i_1,\ldots,i_{m-1},i_0)} 
     z_{i_1}^{j_1-1} \cdots  z_{i_{m-1}}^{j_{m-1}-1}\,  
    z_{i_0}^{j_0-1} \otimes
       x_{i_1} \wedge \cdots \wedge x_{i_{m-1}} \otimes y_{i_0}\, .
\end{aligned}
\end{equation}
On the other hand, a pair $(I,k)$ where $I$ is an $m$-subset of $[\ell]$ and $k \in I$
will index both a row for $dx_I \otimes y_k$ in $C^{(\ell,m)}$
and a row for $dx_{I \setminus \{k\}} \otimes y_k$ in  $D^{(\ell,m-1)}$.
If one assumes that $I=\{i_0,i_1,\ldots,i_{m-1}\}$ in the two expansions \eqref{C-expansion},
\eqref{D-expansion} above, then one can see that these two rows will differ by
a sign;  this sign is the product of the signs of two permutations,
namely, those permutations that sort the ordered sequences
$(i_1,\ldots,i_{m-1},i_0)$ and $(i_1,\ldots,i_{m-1})$ into the 
usual integer orders on $I$ and $I \setminus \{k\}$, respectively.

\vskip.1in
\noindent
{\sf Step 6.}
It remains to show invertibility for $0 \leq m \leq \ell$ of the square matrix $D^{(\ell,m)}$,
that is, the submatrix whose columns give the expansion coefficients in $\CC(z_1,\ldots,z_\ell)$ for each element of
$$
\{df_I\, \theta_k:  I \in \bbinom{[\ell]}{m},\, k \in [\ell-m] \}
$$
in terms of the basis elements 
$$
\{ dx_I \otimes y_k : I \in \bbinom{[\ell]}{m}, k \not\in I \},
$$
ignoring coefficients on all other $K$-basis elements $dx_I \otimes y_k$.

In fact, we will show that $\det D^{(\ell,m)}$ has coefficient $\pm 1$ on its {\it lexicographically-largest} 
monomial, that is, the monomial
$
z_\ell^{a_\ell} z_{\ell-1}^{a_{\ell-1}} \cdots z_2^{a_2} z_1^{a_1}
$  
that achieves the maximum exponent $a_\ell$, and 
among all such monomials with maximum $a_\ell$,
also maximizes $a_{\ell-1}$, and so on.

We argue via induction on $\ell$
by considering the following block decomposition of $D^{(\ell,m)}$:
$$
\bordermatrix{~      
   & \text{\tiny $\{ df_I \theta_k: \ell \in I \}$}  
&  \text{\tiny $\{df_I \theta_{\ell-m}: \ell \not\in I\} $} 
& \text{\tiny $\{df_I \theta_k: \ell \not\in I, k \neq \ell-m\}$} \cr
 \text{\tiny $\{ dx_I \otimes y_k: \ell \in I\}$} &
 \alpha & * & * \cr
\text{\tiny $\{ dx_I \otimes y_\ell \}$}   
         &  \delta  & 
 \beta & * \cr
 \text{\tiny $\{dx_I \otimes y_k: \ell \not\in I \sqcup\{k\} \}$} 
& \phi & \epsilon & \gamma \cr }.
$$
We note two degenerate cases when $m=\ell$ or  $m=\ell-1$:  if $m=\ell$
then $D^{(\ell,\ell)}$ is $0 \times 0$, as pointed out in Step 5, leaving nothing to prove;
if $m=\ell-1$, the sets indexing the rightmost block of columns and the bottommost block of rows are empty, so that
$
D^{(\ell,\ell-1)}=\left( \begin{smallmatrix}
\alpha & *\\
\delta & \beta
\end{smallmatrix} \right).
$

As all rows $dx_I \otimes y_k$ have $k \not\in I$,
the highest powers of $x_\ell$ in entries of $\det D^{(\ell,m)}$ are
\begin{itemize}
\item at most $z_\ell^{\ell-1}$ in the top block of rows $(\alpha,*,*)$,
and $z_\ell^{\ell-1}$ occurs only in the block $\alpha$,
\item at most $z_\ell^{\ell-1-m}$ in the middle rows $(\delta,\beta,*)$,
and $z_\ell^{\ell-1-m}$ occurs only in blocks $\delta, \beta$,
\item only $z_\ell^0$  in the bottom block of rows $(\phi,\epsilon,\gamma)$, that is, 
no $z_\ell$'s occur at all there.
\end{itemize}

We examine the terms in the permutation expansion of $\det D^{(\ell,m)}$ that
achieve the highest power of $z_\ell$.  Since $1 \leq m \leq \ell-1$, without loss of generality, 
$z_\ell^{\ell-1}$ is a strictly higher power than $z_\ell^{\ell-1-m}$, and thus these terms must use only
entries from the block $\alpha$ in the topmost block of rows $(\alpha,*,*)$, that is, they must be terms from
the product
$
\det \alpha \cdot \det  \left( \begin{smallmatrix}
\beta & *\\
\epsilon& \gamma
\end{smallmatrix} \right)
$.
In the degenerate case $m=\ell-1$, they must be terms from $\det \alpha \cdot \det \beta$.  In the nondegenerate
cases $1 \leq m \leq \ell-2$, 
they
must be terms from  $\det \alpha \cdot \det \beta \cdot \det \gamma$
since $z_\ell^{\ell-1-m}$ is a strictly higher power than $z_\ell^0$;
furthermore, these terms must always pick up entries from $\alpha$ 
divisible by $z_\ell^{\ell-1}$ and entries from $\beta$ divisible by
$z_\ell^{\ell-1-m}$.

Upon examining $\alpha, \beta, \gamma$ more closely, 
one finds that 
\begin{equation}
\label{closer-examination-of-alpha-beta-gamma}
\begin{aligned}
\gamma&=D^{(\ell-1,m)},\\
\beta&=z_\ell^{\ell-1-m} \cdot \wedge^m\, {\VDM}^{(\ell-1)} ,\\
\alpha&=z_\ell^{\ell-1} D^{(\ell-1,m-1)} +O(z_\ell^{\ell-2})
\end{aligned}
\end{equation}
where $\wedge^m A$ is the $m^{th}$ {\it exterior power} of the
matrix $A$, and ${\VDM}^{(n)}:=[ z_i^{j-1} ]_{i,j=1,2,\ldots,n}$
is an $n \times n$ {\it Vandermonde matrix}.
For example, when $\ell=4,m=2$, the matrix $\beta$ is
$$
\bordermatrix{~ & 
\text{\tiny $df_3 df_2 \theta_2$} & 
\text{\tiny $df_3 df_1 \theta_2$} & 
\text{\tiny $ df_2 df_1 \theta_2$} \cr
 & & & \cr
\text{\tiny $1\otimes x_3 \wedge x_2 \otimes y_4$} 
& (z_3^2 z_2^1-z_2^2 z_3^1)z_4^1 
& (z_3^2 z_2^0-z_2^2 z_3^0)z_4^1 & (z_3^1 z_2^0-z_2^1 z_3^0)z_4^1  \cr
 & & & \cr
\text{\tiny $1\otimes x_3 \wedge x_1 \otimes y_4$} 
& (z_3^2 z_1^1-z_1^2 z_3^1)z_4^1 
& (z_3^2 z_1^0-z_1^2 z_3^0)z_4^1 & (z_3^1 z_1^0-z_1^1 z_3^0)z_4^1 \cr
 & & & \cr
\text{\tiny $1\otimes x_2 \wedge x_1 \otimes y_4$} 
& (z_2^2 z_1^1-z_1^2 z_2^1)z_4^1 & (z_2^2 z_1^0-z_1^2 z_2^0)z_4^1 & (z_2^1 z_1^0-z_1^1 z_2^0)z_4^1
}
=z_4^{4-1-2} \cdot \wedge^2\, {\VDM}^{(3)},
$$
and the matrix $\alpha$ 
(with terms with the highest power $z_4^3=z_\ell^{\ell-1}$ underlined) is

\begin{tiny}
$$
\bordermatrix{~ & df_4 df_3 \theta_2 & df_4 df_2 \theta_2 
                  &  df_4 df_1 \theta_2 &  df_4 df_3 \theta_1 
                      &  df_4 df_2 \theta_1 &  df_4 df_1 \theta_1 \cr
 & & & & & & \cr
1\otimes x_4 \wedge x_3\otimes y_2 & (\underline{z_4^3 z_3^2} - z_4^2 z_3^3) z_2^1  &
                       (\underline{z_4^3 z_3^1} - z_4^1 z_3^3) z_2^1 &
                        (\underline{z_4^3 z_3^0} - z_4^0 z_3^3) z_2^1 &
                         (\underline{z_4^3 z_3^2} - z_4^2 z_3^3) z_2^0 &
                          (\underline{z_4^3 z_3^1} - z_4^1 z_3^3) z_2^0 &
                            (\underline{z_4^3 z_3^0} - z_4^0 z_3^3) z_2^0 \cr
 & & & & & & \cr
1\otimes x_4 \wedge x_3\otimes y_1 & (\underline{z_4^3 z_3^2} - z_4^2 z_3^3) z_1^1  &
                       (\underline{z_4^3 z_3^1} - z_4^1 z_3^3) z_1^1 &
                        (\underline{z_4^3 z_3^0} - z_4^0 z_3^3) z_1^1 &
                         (\underline{z_4^3 z_3^2} - z_4^2 z_3^3) z_1^0 &
                          (\underline{z_4^3 z_3^1} - z_4^1 z_3^3) z_1^0 &
                            (\underline{z_4^3 z_3^0} - z_4^0 z_3^3) z_1^0 \cr
 & & & & & & \cr
1\otimes x_4 \wedge x_2\otimes y_3 & (\underline{z_4^3 z_2^2} - z_4^2 z_2^3) z_3^1  &
                       (\underline{z_4^3 z_2^1} - z_4^1 z_2^3) z_3^1 &
                        (\underline{z_4^3 z_2^0} - z_4^0 z_2^3) z_3^1 &
                         (\underline{z_4^3 z_2^2} - z_4^2 z_2^3) z_3^0 &
                          (\underline{z_4^3 z_2^1} - z_4^1 z_2^3) z_3^0 &
                            (\underline{z_4^3 z_2^0} - z_4^0 z_2^3) z_3^0 \cr
 & & & & & & \cr
1\otimes x_4 \wedge x_2\otimes y_1 & (\underline{z_4^3 z_2^2} - z_4^2 z_2^3) z_1^1  &
                       (\underline{z_4^3 z_2^1} - z_4^1 z_2^3) z_1^1 &
                        (\underline{z_4^3 z_2^0} - z_4^0 z_2^3) z_1^1 &
                         (\underline{z_4^3 z_2^2} - z_4^2 z_2^3) z_1^0 &
                          (\underline{z_4^3 z_2^1} - z_4^1 z_2^3) z_1^0 &
                            (\underline{z_4^3 z_2^0} - z_4^0 z_2^3) z_1^0 \cr
 & & & & & & \cr
1\otimes x_4 \wedge x_1\otimes y_3 & (\underline{z_4^3 z_1^2} - z_4^2 z_1^3) z_3^1  &
                       (\underline{z_4^3 z_1^1} - z_4^1 z_1^3) z_3^1 &
                        (\underline{z_4^3 z_1^0} - z_4^0 z_1^3) z_3^1 &
                         (\underline{z_4^3 z_1^2} - z_4^2 z_1^3) z_3^0 &
                          (\underline{z_4^3 z_1^1} - z_4^1 z_1^3) z_3^0 &
                            (\underline{z_4^3 z_1^0} - z_4^0 z_1^3) z_3^0 \cr
 & & & & & & \cr
1\otimes x_4 \wedge x_1\otimes y_2 & (\underline{z_4^3 z_1^2} - z_4^2 z_1^3) z_2^1  &
                       (\underline{z_4^3 z_1^1} - z_4^1 z_1^3) z_2^1 &
                        (\underline{z_4^3 z_1^0} - z_4^0 z_1^3) z_2^1 &
                         (\underline{z_4^3 z_1^2} - z_4^2 z_1^3) z_2^0 &
                          (\underline{z_4^3 z_1^1} - z_4^1 z_1^3) z_2^0 &
                            (\underline{z_4^3 z_1^0} - z_4^0 z_1^3) z_2^0 
}.
$$
\end{tiny}
\vspace{1ex}

\noindent
Note that here $\alpha=z_4^{4-1} \cdot D^{(3,1)} + O(z_4^{4-2})$, as asserted in \eqref{closer-examination-of-alpha-beta-gamma}.

\normalsize
As the lex-largest monomial in $\det D^{(\ell,m)}$
has the same coefficient as in $\det \alpha \cdot \det \beta \cdot \det \gamma$,
the descriptions in \eqref{closer-examination-of-alpha-beta-gamma}
imply that this monomial is a power of $z_\ell$ 
times the product of the lex-largest monomials in 
$$ 
\det D^{(\ell-1,m)}, 
\,\, \det D^{(\ell-1,m-1)}, 
\,\, \det \wedge^m\, {\VDM}^{(\ell-1)}. 
$$
By induction on $\ell$, the coefficient on the lex-largest monomials
in $\det D^{(\ell-1,m)}$ and $\det D^{(\ell-1,m-1)}$
are both $\pm 1$.  For
$\det \wedge^m\,  {\VDM}^{(\ell-1)}$, the Sylvester-Franke Theorem
says $\det \wedge^m A = \left( \det A \right)^{\binom{n-1}{m-1}}$,
and since one has coefficient $\pm 1$ on
the lex-largest monomial 
$z_{\ell-1}^{\ell-2} \cdots z_2^1 z_1^0$ in $\det{\VDM}^{(\ell-1)}$,
the same holds for $\det \wedge^m\, {\VDM}^{(\ell-1)}$.
Thus this also holds for $\det D^{(\ell,m)}$, completing the proof.
\end{proof}

\section*{Acknowledgments}
The authors thank Corrado DeConcini, Paolo Papi, Mark Reeder and John Stembridge for helpful conversations and references, and making them aware of their unpublished work.  They also thank an anonymous referee for helpful comments.


\vspace{-2ex}
\end{document}